\def\cB{\mathcal{B}}
\def\cH{\mathcal{H}}
\def\cM{\mathcal{M}}
\def\mH{\cM\cH}
\def\mini{\mathrm{min}}
\def\maxi{\mathrm{max}}
\def\uctau{c^{\uparrow}(\tau)}
\def\lctau{c^{\downarrow}(\tau)}
\def\ubtau{\overline{\tau}^{\uparrow}}
\def\lbtau{\overline{\tau}^{\downarrow}}
\def\Lu{\L{}ukasiewicz\ } 
\newtheorem{claim}{Claim}
\newtheorem{theo}{Theorem}
\newtheorem{prop}{Proposition}
\newtheorem{coro}{Corollary}
\theoremstyle{definition}
\newtheorem{rmk}{Remark}
\title{On the two-point function of general planar maps and hypermaps}
\author{J\'er\'emie Bouttier$^{1,2}$, \'Eric Fusy$^3$ and Emmanuel Guitter$^1$}
\address{$^1$Institut de Physique Th\'eorique\\ CEA, IPhT, 91191 Gif-sur-Yvette, France\\ CNRS, URA 2306\\
  $^2$D\'epartement de Math\'ematiques et Applications\\ \'Ecole normale sup\'erieure,
  45 rue d'Ulm, 75231 Paris Cedex 05, France\\
  $^3$ LIX, \'Ecole Polytechnique, 91128 Palaiseau, France}
\email{jeremie.bouttier@cea.fr, fusy@lix.polytechnique.fr, emmanuel.guitter@cea.fr}
\date{\today}
\begin{document}
\maketitle

\begin{abstract}
  We consider the problem of computing the distance-dependent two-point function of general planar maps and hypermaps, i.e.~the problem of counting such maps with two marked points at a prescribed distance. The maps considered here may have faces of arbitrarily large degree, which requires new bijections to be tackled. We obtain exact expressions for the following cases: general and bipartite maps counted by their number of edges, $3$-hypermaps and $3$-constellations counted by their number of dark faces, and finally general and bipartite maps counted by both their number of edges and their number of faces.
\end{abstract}

\section{Introduction}

Much attention has been devoted to the study of metric properties of random planar maps,
starting from the physical predictions of Ambj\o rn and Watabiki \cite{AmWa95} and the seminal probabilist work of Chassaing and Schaeffer \cite{ChSc04}.
Of particular interest is the so-called two-point function which, colloquially speaking, encodes the distribution of the distance between two random points. This two-point function has been computed exactly \cite{GEOD,PDFRaman} for several families of maps: quadrangulations, triangulations, Eulerian triangulations... A general formalism was developed in \cite{BG12} to address the case of maps with controlled face degrees but, in practice, a fully explicit expression was only obtained in the case of bounded face degrees. In this paper, we consider instead families of maps with \emph{unbounded} face degrees: general (arbitrary) planar maps, bipartite planar maps and, more generally, hypermaps and constellations. The control is then on their natural size parameter, namely their number of edges. We will compute exactly the two-point function of these maps, using as a new ingredient some bijections which relate them to maps with bounded degrees, and keep track of distances (in the case of hypermaps, we actually consider a ``quasi-distance'' based on oriented edges). More precisely, we shall extend the distance-preserving bijection recently found by Ambj\o rn and Budd \cite{AmBudd} between planar quadrangulations and general planar maps, and get a correspondence between, on the one hand, bipartite maps with controlled face degrees and, on the other hand, hypermaps with controlled hyperedge degrees but arbitrary (uncontrolled) face degrees. Briefly said, the Ambj\o rn-Budd bijection consists in applying on quadrangulations the rules opposite to those used for the Schaeffer bijection \cite{ChSc04,SchPhD}
and here we shall apply the same trick to the more general rules used for the BDG bijection \cite{BDG04}.

\begin{figure}
  \centering
  \begin{tikzpicture}[
    set/.style = {draw, very thick, rectangle,
      rounded corners, inner sep=10pt, inner ysep=10pt, align=center},
    mset/.style = {set, font=\bfseries},
    bij/.style = {<->, very thick},
    spec/.style = {->, decorate,
      decoration={snake,amplitude=.4mm,segment length=2mm,post
        length=1mm}, align=left}]
    \useasboundingbox (-6,4) rectangle (6,-13);

    \draw (0,0) node[mset](slm){Suitably labelled\\ maps ($\cB$)};
    \draw (-5.5,0) node[mset](wlhm){Well-labelled\\ hypermaps
      ($\cH$)};
    \draw (5.5,0) node[mset](mwlhm){Mirror\\ well-labelled\\
      hypermaps ($\mH$)};
    \draw (slm) edge[bij] node[above]{\Large $\Phi$}
      node[below]{Thm.~\ref{theo:bijmobile}}(wlhm);
    \draw (slm) edge[bij] node[above]{\Large $\Phi^-$} 
      node[below]{Cor.~\ref{coro:bij_anti_mobile}}(mwlhm);

    \draw (0,3) node[set,dashed](slq){Suitably labelled \\
      quadrangulations};
    \draw (-5.5,3) node[set,dashed](wlm){Well-labelled maps};
    \draw (5.5,3) node[set,dashed](wlm2){Well-labelled maps};
    \draw (slq) edge[bij] node[above]{(M,AB)} node[below]{
      Rmk.~\ref{rmk:mab}}(wlm);
    \draw (slq) edge[bij] node[above]{(M,AB)} node[below]{
      Rmk.~\ref{rmk:mab}}(wlm2);
    \draw (slm) edge[spec] (slq);
    \draw (wlhm) edge[spec] (wlm);
    \draw (mwlhm) edge[spec] (wlm2);

    \draw (0,-3) node[set](vpbm){Vertex-pointed \\ bipartite maps};
    \draw (-5.5,-3) node[set](mob){Mobiles};
    \draw (5.5,-3) node[set](vphm){Vertex-pointed \\ hypermaps};
    \draw (vpbm) edge [bij] node[above]{
      Prop.~\ref{prop:point_bip_mobiles} (BDG)} (mob);
    \draw (vpbm) edge[bij] node[above]{Prop.~\ref{prop:bij_bip_hyp}}(vphm);
    \draw (mob) edge[bij,bend right=20] node[near start,below]{
      Thm.~\ref{theo:hyp_mobiles}} (vphm);
    \draw (slm) edge[spec] node[right]{geodesic\\ labelling} (vpbm);
    \draw (wlhm) edge[spec] node[right]{single face} (mob);
    \draw (mwlhm) edge[spec] node[right]{geodesic\\ labelling} (vphm);

    \draw (0,-6) node[set](vp2p){Vertex-pointed\\
      $2p$-angulations};
    \draw (-5.5,-6) node[set](pmob){$p$-mobiles};
    \draw (5.5,-6) node[set] (vpphm) {Vertex-pointed\\
      $p$-hypermaps}; 
    \draw (vp2p) edge [bij] node[above]{(BDG)} node[below]{
      (S for $p=2$)} (pmob);
    \draw (vp2p) edge[bij] node[below]{(AB for $p=2$)} (vpphm);
    \draw (pmob) edge[bij,bend right=20] node[near start,below]{
      Thm.~\ref{theo:hyp_mobiles} (AB for $p=2$)}
    node[near end,below,text=blue]{
      Secs.~\ref{sec:2pgen}, \ref{sec:2p3h}} (vpphm);
    \draw (vpbm) edge[spec] (vp2p); \draw (mob) edge[spec] (pmob);
    \draw (vphm) edge[spec] (vpphm);

    \draw (0,-9) node[set](vpc2p){Vertex-pointed\\ stretched\\
      $2p$-angulations};
    \draw (-5.5,-9) node[set](pdmob){$p$-descending \\ mobiles};
    \draw (5.5,-9) node[set](vppc){Vertex-pointed\\ $p$-constellations};
    \draw (vpc2p) edge [bij] (pdmob); \draw (vpc2p) edge[bij] (vppc);
    \draw (pdmob) edge[bij,bend right=20] node[near start,below]{
      Prop.~\ref{prop:const1}} node[near end,below,text=blue]{
      Secs.~\ref{sec:2pbip}, \ref{sec:2p3c}} (vppc);
    \draw (vp2p) edge[spec] (vpc2p); \draw (pmob) edge[spec] (pdmob);
    \draw (vpphm) edge[spec] (vppc);
    \draw (0,-12) node[set](preg){$(p+1)$-regular \\
      constellations};
    \draw (pdmob) edge[bij,out=270,in=180] node[below]{
      (BDG)} (preg);
    \draw (vpc2p) edge[bij] (preg);
    \draw (vppc) edge[bij,out=270,in=0] node[below,align=left]{
      Prop.~\ref{prop:const2}\\ \textcolor{blue}{
        Sec.~\ref{sec:comm2pconst}}} (preg);
  \end{tikzpicture}
  \caption{An overview of the bijections presented in
    Section~\ref{sec:bij}. Bijections are represented by lines with
    double arrows and wiggled lines represent specializations.
    Where applicable, we refer to the text
    (with blue text corresponding to applications) or to previous works:
    S stands for the Schaeffer bijection \cite{SchPhD}, BDG for the bijection
    of \cite{BDG04}, M for the Miermont bijection \cite{Miermont2009} and AB 
    for bijections introduced in \cite{AmBudd}.}
  \label{fig:overview}
\end{figure}
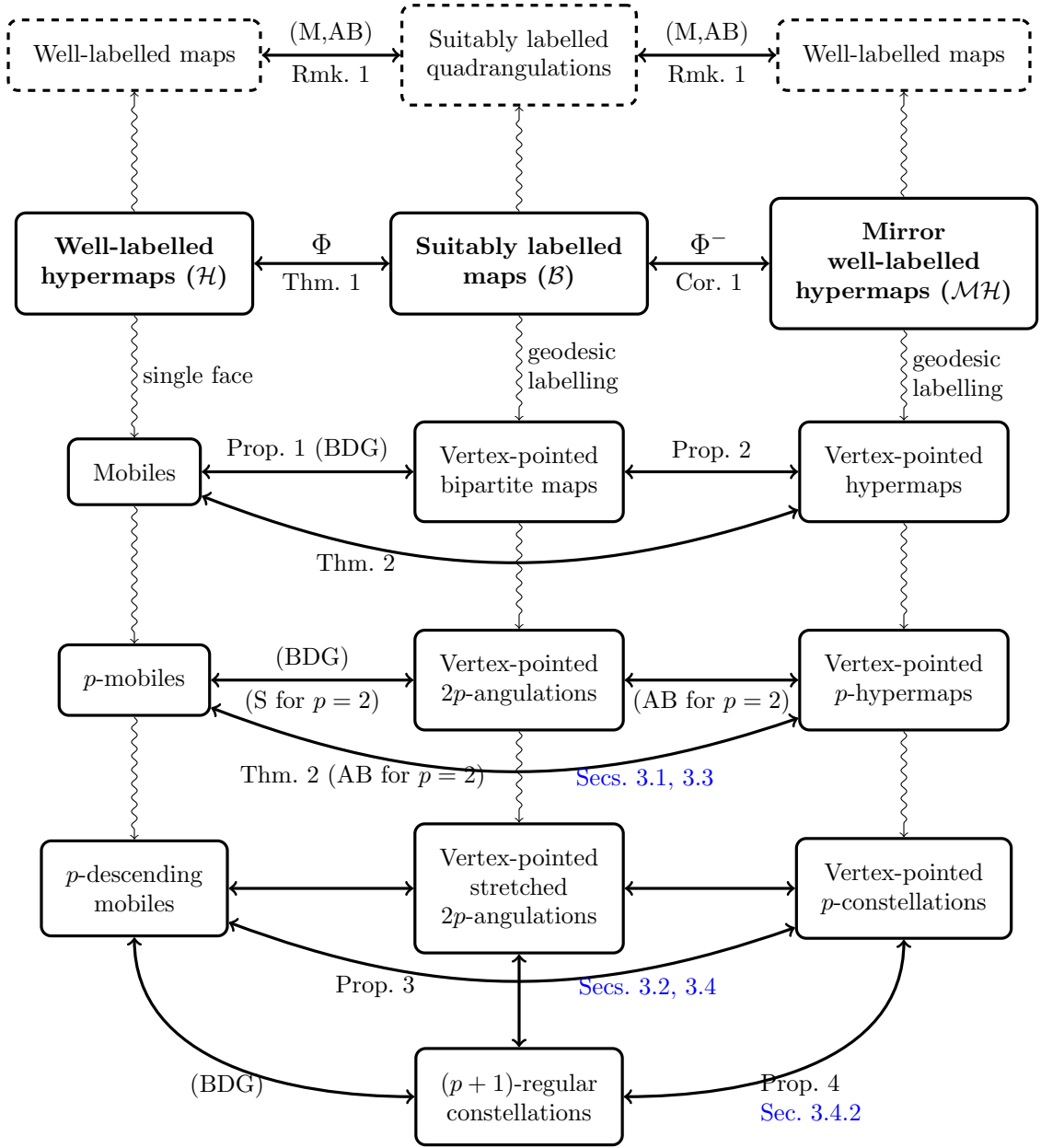

The paper is organized as follows. All the necessary bijections are established in Section~\ref{sec:bij} (Figure~\ref{fig:overview} provides an overview intended to help the reader). We start by introducing so-called suitably labelled maps and well-labelled hypermaps (Section~\ref{sec:slm}) and a very general bijection $\Phi$ between them, using the BDG rules (Section~\ref{sec:slmbij}). Reversing these rules gives rise to another ``mirror'' bijection $\Phi^-$ (Section~\ref{sec:mirror}). We will then specialize $\Phi$ and $\Phi^-$ to vertex-pointed bipartite maps endowed with their geodesic labelling (Section~\ref{sec:mobspec}): $\Phi$ yields the BDG bijection, $\Phi^-$ is the new ingredient mentioned above, and their composition yields a direct distance-preserving correspondence between hypermaps and mobiles. We then discuss the further specialization to constellations (Section~\ref{sec:conspec}), before completing some technical proofs (Section~\ref{sec:proof_bij}) and discussing the extension to maps of higher genus (Section~\ref{sec:highergenus}). In Section~\ref{sec:2psing}, we apply these bijections to compute explicitly the two-point function of several families of maps controlled by their number of edges: general maps (Section~\ref{sec:2pgen}), bipartite maps and general hypermaps (Section~\ref{sec:2pbip}), $3$-hypermaps (Section~\ref{sec:2p3h}) and $3$-constellations (Section~\ref{sec:2p3c}). Connections to previous works \cite{GEOD,PDFRaman,BG12} are discussed where applicable. In Section~\ref{sec:2p2par}, we provide a refinement of the two-point function of general maps (Section~\ref{sec:2pgen2par}) and bipartite maps (Section~\ref{sec:2pbip2par}), where we control both their number of edges and their number of faces. We conclude in Section~\ref{sec:conclusion} by some remarks and open questions.

\section{Bijections}
\label{sec:bij}

\subsection{Suitably labelled maps and well-labelled hypermaps: definitions}
\label{sec:slm}
A \emph{suitably labelled map} is a map $B$ (on the sphere) where each vertex $v$ carries a label $\ell(v)\in\mathbb{Z}$,  
such that, for any edge $e=\{u,v\}$ of $B$, $|\ell(v)-\ell(u)|=1$. 
Note that $B$ is necessarily bipartite (each edge connects a vertex of odd label to a vertex of even label). 
A \emph{local max} (resp. \emph{local min}) is a vertex such that all neighbors have smaller (resp. greater) label. The \emph{cw-type} (resp. \emph{ccw-type}) 
of a face $f$ is the cyclic list of integers given by the labels
of vertices in clockwise (resp. counterclockwise) order around $f$. 
Denote by $\cB$ the set of suitably labelled maps.

An Eulerian map is a map (on the sphere) with all vertices of even degree; such maps can be properly 
bicolored at their faces (with dark faces and light
faces), i.e., such that any edge has a dark face on one side and a light face on the other side. 
An \emph{hypermap} $H$ is a properly face-bicolored Eulerian map (viewing dark faces as hyperedges).
The \emph{star-representation} of $H$
is the bipartite map $S$ obtained by replacing the contour of each dark face $f$ (of a given degree $d$) 
by a star (of degree $d$) centered at a new black vertex $v_f$ placed inside $f$, 
see Figure~\ref{fig:hypermap}. 
\begin{figure}[h!]
\begin{center}
\includegraphics[width=10cm]{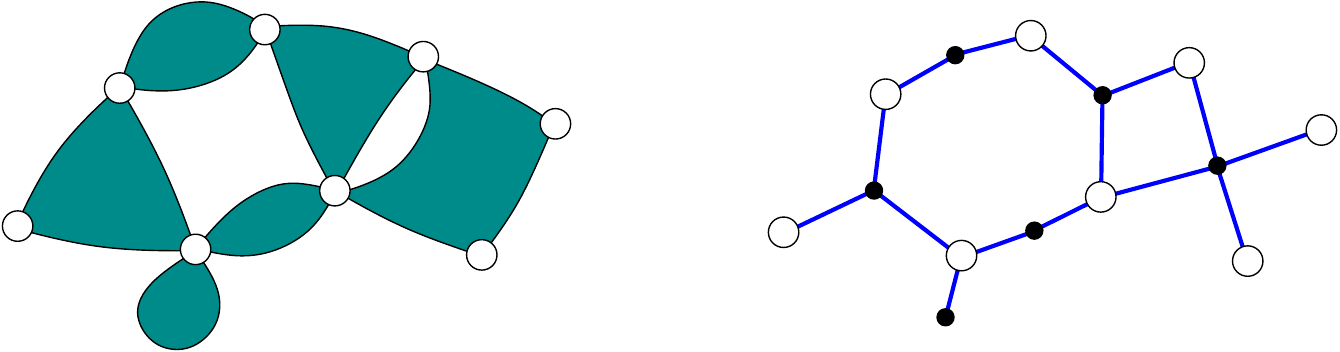}
\end{center}
\caption{Left: an hypermap, right: its star-respresentation.}
\label{fig:hypermap}
\end{figure}
A \emph{well-labelled hypermap} is an hypermap $H$ (on the sphere) where each vertex $v$ carries a label $\ell(v)\in\mathbb{Z}$ 
such that, for any edge $e=\{v,u\}$, with the dark face incident to $e$ on the right of $e$ traversed from $v$ to $u$, $\ell(u)\geq \ell(v)-1$. 
For a vertex $u$ of $H$, a \emph{right neighbor} of $u$ is a vertex $v$ adjacent to $u$, and such that $\{v,u\}$
traversed from $v$ to $u$ has a dark face on its right. A \emph{right local min} 
(resp. \emph{right local max}) 
 of $H$ is a vertex $u$ such that any right neighbor $v$ of
$u$ satisfies $\ell(v)\geq \ell(u)$ (resp. $\ell(v)\leq \ell(u)$).   
The \emph{cw-type} $\tau$ of a dark face $f$ is the cyclic list of integers given by the labels of vertices
in \emph{clockwise} order around $f$.   
Denote by $\cH$ the family of well-labelled hypermaps.  
Note that by definition, the cw-type $\tau$ of a dark face is a so-called \emph{\Lu cyclic sequence},
i.e., a cyclic integer list such that the difference between an element of the list and the preceding one is at least $-1$.  
Define the \emph{upper completion} $\uctau$ (resp. \emph{lower completion} $\lctau$)  
of $\tau$ as the cyclic sequence  
obtained from $\tau$ by inserting the rising sequence $i+1,\ldots,j+1$ (resp. the rising
sequence $i-1,\ldots,j-1$) between any two
consecutive elements $i,j$ such that $j\geq i$. The \emph{upper complement} $\ubtau$ (resp. \emph{lower complement} $\lbtau$) of $\tau$
is the cyclic sequence $\uctau\backslash\tau$ (resp. $\lctau\backslash\tau$) 
taken in reverse order, see Figure~\ref{fig:type}
for an example. Note that the upper or lower complement is also a \Lu cyclic sequence, and that 
the mappings from a \Lu cyclic sequence to its upper and lower complements are inverse of one another. 

\begin{figure}[h!]
\begin{center}
\includegraphics[width=11cm]{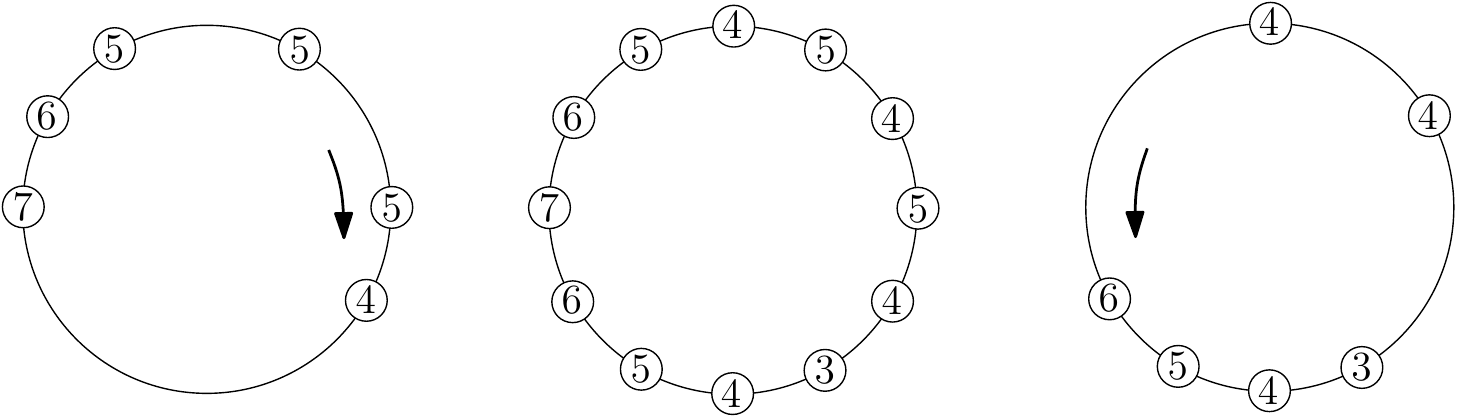}
\end{center}
\caption{Left: a \Lu cyclic sequence $\tau$, right: the lower complement $\lbtau$,
middle: the lower completion of $\tau$ (read clockwise) 
which is also the upper completion  of $\lbtau$ (read counterclockwise).} 
\label{fig:type}
\end{figure}

\subsection{Suitably labelled maps and well-labelled hypermaps: the bijection.}
\label{sec:slmbij}
At first we explain how to construct (the star representation of)  a well-labelled hypermap from a   
 suitably labelled map.  
Let $B\in\cB$. Place a black vertex $v_f$ inside each face $f$ of $B$. Then apply the so-called \emph{BDG rules}
(shown in Figure~\ref{fig:rules} left-part)  
in $f$, i.e., when traversing $f$ clockwise, for each descending edge $e=\{u,v\}$, insert a new edge from $v_f$ to $u$ 
(note that the vertices of $B$ not incident to any of these new edges are exactly the local min of $B$). 
Then erase all the local min of $B$ and all edges of $B$. Call $S$ the obtained figure. 

\begin{figure}[h!]
\begin{center}
\includegraphics[width=11cm]{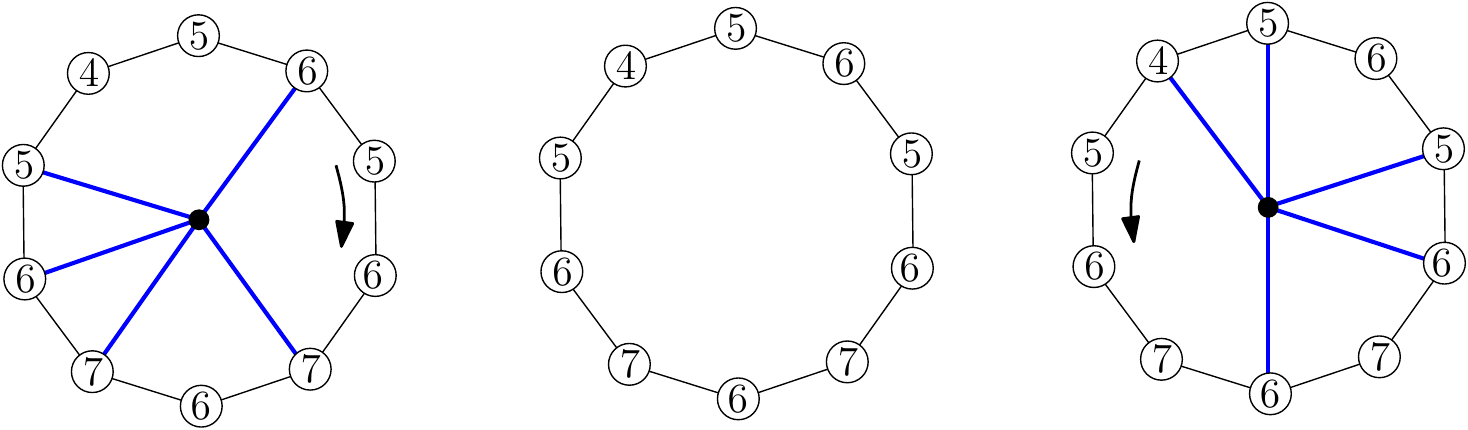}
\end{center}
\caption{Middle: a face $f$ of a suitably labelled map; left: application of the BDG rules,
right: application of the complementary rules. The list of neighbor-labels of the black vertex
in counterclockwise order using the complementary rules is the lower complement of the list of neighbor-labels 
of the black vertex in clockwise order using the BDG rules.  
}
\label{fig:rules}
\end{figure}

\begin{claim}\label{claim:Smap}
The obtained figure $S$ is the star-representation of a well-labelled hypermap $H$. 
\end{claim}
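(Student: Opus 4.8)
The plan is to verify in turn the three things that ``$S$ is the star-representation of a well-labelled hypermap'' actually asserts: (i) $S$ is a bipartite map on the sphere, with black vertices exactly the $v_f$; (ii) un-starring $S$ (i.e.\ contracting each star back to the contour of a dark face) produces an Eulerian map $H$ that is properly face-bicolorable, so $H$ is a hypermap; and (iii) the labelling $\ell$ inherited from $B$ (restricted to the surviving vertices, i.e.\ the non-local-min vertices of $B$) satisfies the well-labelling inequality of Section~\ref{sec:slm}. For (i), I would argue that around each original vertex $u$ of $B$ that is not a local min, the new edges created by the BDG rule in the various faces incident to $u$, together with the erased corners, can be read off in cyclic order: between two consecutive \emph{ascending} edges of $B$ at $u$ (in clockwise order) there is exactly one descending edge, hence exactly one new edge to some $v_f$, so the new edges at $u$ inherit a cyclic order and $S$ is a planar map; the faces of $S$ are in bijection with the edges of $B$ (each edge $e=\{u,v\}$ of $B$, say with $\ell(v)=\ell(u)-1$, borders the corner of the face on one side where a new edge $v\!-\!v_{f}$ was drawn and, after erasing, becomes a face of $S$). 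The black vertices $v_f$ have degree equal to the number of descending steps around $f$, which is the length of the \Lu{}iewicz cyclic sequence $\tau$, and they are never adjacent to one another, so $S$ is bipartite with the $v_f$ forming one color class.

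For (ii), properness of the face-bicoloring of the associated Eulerian map $H$: when we replace each black vertex $v_f$ of $S$ by a dark face (the hyperedge), the light faces of $H$ are exactly the faces of $S$ that do not contain a $v_f$, i.e.\ the faces coming from edges of $B$ as above, and each edge of $S$ separates one dark face (the one around the incident $v_f$) from one light face; Eulerianity then follows because every vertex of $H$ other than a $v_f$ has its $S$-edges alternating around it between ``entering a star'' and ``leaving along the next light face'', and I would make this precise by the same clockwise bookkeeping at $u$ used in (i). Point (iii) is where the real content lies. Fix an edge $e=\{v,u\}$ of $S$, oriented from $v$ to $u$ with the dark face $f$ on its right; this edge was created by the BDG rule because, traversing the face $f$ of $B$ clockwise, there was a descending edge from $u$ to some vertex $w$ with $\ell(w)=\ell(u)-1$ — actually one has to match the right conventions, and I would set them so that the new edge at $v_f$ points to the \emph{tail} of the descending step. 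The inequality $\ell(u)\ge \ell(v)-1$ then becomes a statement about two consecutive descending steps around $f$: if the clockwise contour of $f$ visits labels $\dots, a, a-1, \dots, b, b-1,\dots$ at the two descending steps that produced the consecutive edges to $v_f$, then the in-between portion of the contour is a \Lu{} path from $a-1$ up to $b$, so $b \ge a-1 -$ (something), and unwinding this is precisely $\ell(u)\ge\ell(v)-1$. In other words, the cw-type $\tau$ of the dark face $f$ records the tails of the descending steps of $f$ read clockwise, and the \Lu{} property of $\tau$ (difference between an element and the preceding one is at least $-1$), already observed in Section~\ref{sec:slm}, is exactly the well-labelling condition at every edge of $H$.

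The main obstacle I anticipate is \emph{not} any single inequality but the orientation/convention bookkeeping: one must pin down once and for all which endpoint of a descending edge of $B$ the new edge at $v_f$ attaches to, on which side of the resulting edge of $S$ the face $f$ lies, and whether ``clockwise around $f$'' in $B$ matches ``clockwise around the dark face'' in $H$ after un-starring — and then check that with these choices (a) $S$ is genuinely embedded (no crossings), which is the planarity content of (i), and (b) the resulting $\tau$ is read clockwise around the dark face as claimed, so that its automatic \Lu{} property yields (iii). I would organize the proof around a single local picture at a vertex $u$ of $B$ with its incident edges sorted by clockwise angle and by whether they ascend or descend, deduce the cyclic structure of $S$ near $u$ from it, and then everything else is a short deduction; the global statement follows because $B$ is on the sphere and the construction is purely local in each face. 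Since the complementary (``mirror'') rules are obtained by the symmetry described in Figure~\ref{fig:rules}, no separate argument will be needed for $\Phi^-$ beyond reversing the role of local min and local max.
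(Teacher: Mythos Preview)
Your proposal misses the central difficulty. The paper explicitly notes that the well-labelling inequality is trivially satisfied (your part~(iii)), and that the non-trivial content of Claim~\ref{claim:Smap} is that $S$ is \emph{connected}, hence a map. Your argument for~(i) only addresses the local cyclic structure at each surviving vertex, which shows at most that $S$ is an embedded graph on the sphere; it does not establish connectedness. Moreover, your assertion that ``the faces of $S$ are in bijection with the edges of $B$'' is false: take $B$ to be the path on three vertices with labels $0,1,2$; then $B$ has two edges and a single local min (the vertex labelled $0$), but applying the BDG rules yields $S$ equal to a path $v_1\!-\!v_f\!-\!v_2$, which has a single face. In general, the faces of $S$ turn out to be in bijection with the \emph{local min} of $B$, not with its edges, and this is precisely what must be proved.

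The paper's argument is an Euler-characteristic count: writing $s_v,s_e,s_f$ for the numbers of vertices, edges, faces of $S$ and $m$ for the number of local min of $B$, one has $s_e=|E(B)|$ and $s_v+m=|V(B)|+|F(B)|$, so Euler's relation for $B$ gives $s_v+m=s_e+2$; comparing with Euler's relation for the possibly disconnected $S$, connectedness of $S$ is equivalent to $s_f\le m$. This inequality is then obtained by showing that every face of $S$ contains at least one local min of $B$, via a descent argument in the completed triangulation $C$ of $B$: from any triangle not incident to a local min one can reach, without crossing $S$, a triangle of strictly smaller minimum label (Claim~\ref{claim:accessible}). This global step is entirely absent from your proposal, and the obstacle you anticipate, namely orientation and convention bookkeeping, is not where the actual work lies.
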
 
The proof is delayed to Section~\ref{sec:proof_bij}  
(the local condition of being well-labelled is trivially satisfied, the non-trivial
part is to show that $S$ is a map, i.e., is connected). Let $\Phi$ be the mapping that associates $H$ to $B$. 
Figure \ref{fig:bijopen} displays an example of this mapping.

\begin{figure}[h!]
\begin{center}
\includegraphics[width=13cm]{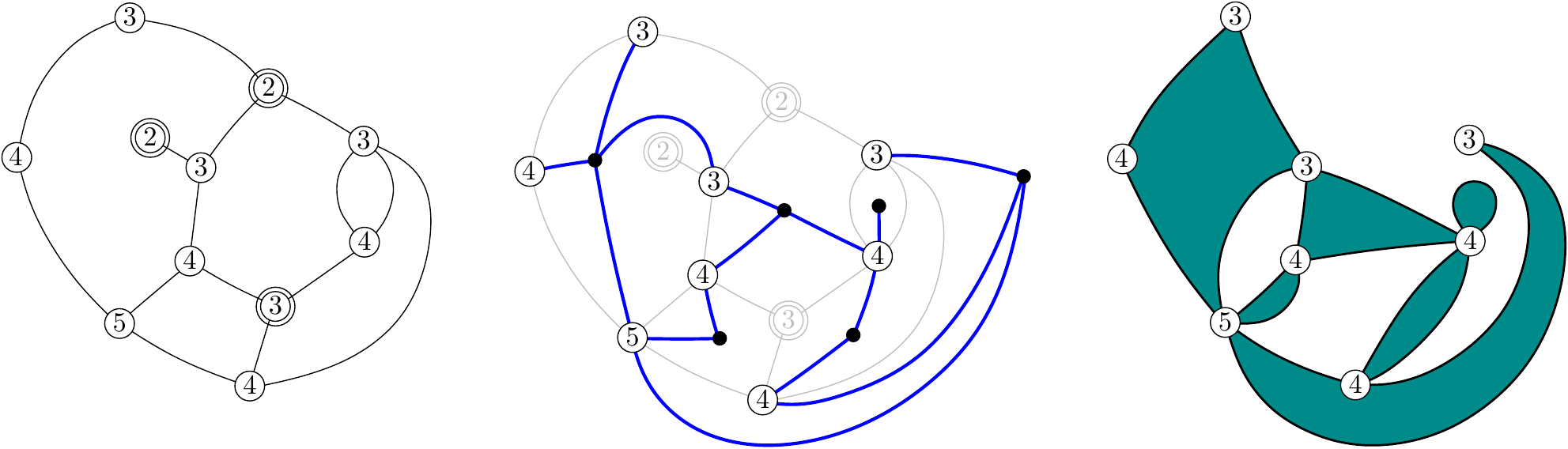}
\end{center}
\caption{The mapping $\Phi$ from a suitably labelled map (local min are surrounded) to a well-labelled hypermap.}
\label{fig:bijopen}
\end{figure}

We now describe the inverse mapping $\Psi$ (see Figure \ref{fig:bijclose} for an example). Let $H\in\cH$.   
For each light face $f$ of $H$, denote by $\mini(f)$ (resp. $\maxi(f)$) the minimal (resp. maximal) label of  vertices incident to $f$. Let $S$ be the star representation of $H$. Consider any face $f$ of $S$ (which identifies 
to a light face of $H$).  Insert inside $f$ a vertex $v_f$ of label $\mini(f)-1$. 
Then, for each corner $c$ of $f$ at a labelled (white) vertex $v$, insert a leg in $c$.
If $\ell(v)>\mini(f)$, connect the free extremity of the leg to the 
next corner of label $\ell(v)-1$ after $c$ in counterclockwise order around $f$ (note that, when the map is drawn in the plane, 
going counterclockwise around the outer face amounts to going clockwise around the map). 
If $\ell(v)=\mini(f)$, connect the free extremity of the leg to $v_f$. Finally delete all black vertices and all edges of $S$.
Denote by $B$ the obtained figure.

\begin{figure}[h!]
\begin{center}
\includegraphics[width=13cm]{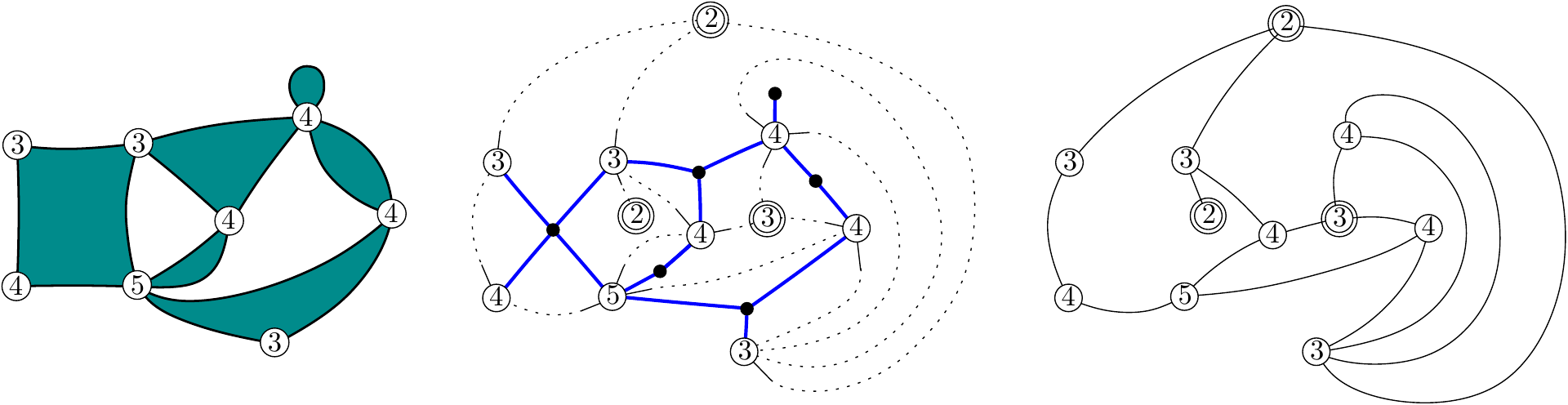}
\end{center}
\caption{The mapping $\Psi$ from a well-labelled hypermap to a suitably labelled map.}
\label{fig:bijclose}
\end{figure}

\begin{claim}\label{claim:Bmap}
The obtained figure $B$ is a suitably labelled map. 
\end{claim}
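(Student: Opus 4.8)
The plan is to treat three things separately: the local label condition, which is immediate; the planarity of $B$, a routine non-crossing argument; and the connectedness of $B$, which is the real content.

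\emph{Local condition.} Fix a light face $f$ of $H$ and traverse $\partial f$ keeping $f$ on the left (this is the counterclockwise direction around $f$). Each boundary edge is then crossed with its incident dark face on the right, so the defining inequality of a well-labelled hypermap says that, along this traversal, the label decreases by at most one at each step. Hence, starting from a corner $c$ at a vertex $v$ with $\ell(v)>\mini(f)$ and going counterclockwise, the first vertex of label $<\ell(v)$ that is met has label exactly $\ell(v)-1$, and such a vertex does occur before returning to $c$ since $\mini(f)<\ell(v)$; so the rule defining $\Psi$ makes sense. Every new edge thus joins $v$ to a vertex of label $\ell(v)-1$, except the edges created when $\ell(v)=\mini(f)$, which join $v$ to $v_f$, of label $\mini(f)-1=\ell(v)-1$. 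So each edge of $B$ has endpoints whose labels differ by exactly $1$; in particular $B$ has no loop, and it satisfies the local requirement to be suitably labelled. Moreover $B$ has no isolated vertex: every vertex of $H$ has a corner in $S$ and hence emits a leg, while each $v_f$ receives the leg of the corner realizing $\mini(f)$.

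\emph{Planarity.} The new edges lie inside faces of $S$, so they meet no edge of $S$, and inside a single face $f$ they do not cross one another: when the leg at a corner $c$ (vertex $v$, label $k$) runs to its target $c'$, every corner strictly between $c$ and $c'$ along the counterclockwise boundary of $f$ has label $\ge k$, because $c'$ is the first corner of label $<k$ and that label is $k-1$. The arcs therefore nest like a well-formed bracketing, with $v_f$ in the innermost cell, reached from the corners of label $\mini(f)$; non-crossing then follows by the usual argument. Deleting the old black vertices and the edges of $S$ preserves the embedding on the sphere, so $B$ is a graph properly drawn on the sphere, and it remains only to show it is connected.

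\emph{Connectedness} is the main obstacle. The star-representation $S$ of the connected hypermap $H$ is connected, and stays connected after adjoining the vertices $v_f$ and the new edges; but forming $B$ also deletes every old edge and old black vertex, and one must rule out that this disconnects. I would first record a simplification: in each of its incident faces a white vertex emits a leg to label one less (either to another white vertex or to the corresponding $v_f$), so every vertex of $B$ is joined, by a label-decreasing path of new edges, to one of the vertices $v_f$, and the $v_f$ are exactly the local minima of $B$; hence $B$ is connected if and only if the $v_f$ all lie in a single component. To connect them I would use the connectedness of $H$ together with the fact that the cw-type of every dark face is a \Lu cyclic sequence: following the new edges along and around a dark face threads together the $v_f$ of its successive incident light faces, and chaining over a path of dark faces in $H$ reaches all of them. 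Making this precise — the chief subtlety being to control which leg lands on which corner when the labels along a dark or light face fail to decrease strictly, and dealing with a vertex that occurs several times on the boundary of a face — is, exactly as for Claim~\ref{claim:Smap}, the substance of Section~\ref{sec:proof_bij}. Granting it, $B$ is a connected graph embedded on the sphere with $|\ell(u)-\ell(v)|=1$ across every edge, i.e.\ $B\in\cB$.
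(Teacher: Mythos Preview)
Your local-condition and non-crossing arguments are fine, but the paper does not prove connectedness the way you sketch. You anticipate an analogue of the Claim~\ref{claim:Smap} proof (Euler counting plus label-chasing); instead the paper takes a short topological route. It examines the picture around each black vertex $b$ of $S$: in every corner of $b$ the construction $\Psi$ draws new edges of $B$, and these edges assemble around $b$ into a single closed contour of cw-type $\lctau$, where $\tau$ is the cw-type of $b$ (Figure~\ref{fig:psi_face}). That contour bounds a topological disk containing $b$, and the disks --- one per black vertex --- tile the whole sphere. A figure on the sphere whose complement is a disjoint union of open disks is, by the alternative definition of a map the paper invokes (gluing of disks to form a closed surface), automatically a map; connectedness never has to be argued directly. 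The same local picture shows immediately that applying the BDG rules to $B$ recovers precisely the edges of $S$, so $\Phi\circ\Psi=\mathrm{Id}$ comes in the same stroke.

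Your path-based outline can in fact be completed --- inside each face $f$ of $S$ every white corner follows legs down to $v_f$, so the part of $B$ in $\overline{f}$ is connected; adjacent faces of $S$ share a white vertex, so their $v_f$'s lie in the same component; and the face-adjacency graph of $S$ is connected --- but this is longer, does not directly exhibit the face structure of $B$, and still leaves $\Phi\circ\Psi=\mathrm{Id}$ to be proved separately.
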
 
The proof is delayed to Section~\ref{sec:proof_bij} (the fact that the labelling is suitable is clear by construction,
the nontrivial point is to show that $B$ is a map). 

\begin{theo}\label{theo:bijmobile}
The mapping $\Phi$ is a bijection  between $\cB$ and $\cH$; the inverse mapping is $\Psi$. 
For $B\in\cB$ and $H=\Phi(B)$, each vertex $v$ of $H$ corresponds to a non local min vertex $v'$ of $B$ of the same label, and $v$ is a right local max in $H$ iff $v'$ is a local max in $B$, 
each light face $f$ of $H$ corresponds to a local min vertex of $B$ of label $\mini(f)-1$, and each dark face of $H$ of cw-type $\tau$
corresponds to a face of $B$ of cw-type $\lctau$.   
\end{theo}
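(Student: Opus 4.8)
The plan is to show that $\Psi\circ\Phi=\mathrm{id}_\cB$ and $\Phi\circ\Psi=\mathrm{id}_\cH$, having already granted (via Claims~\ref{claim:Smap} and~\ref{claim:Bmap}) that $\Phi$ and $\Psi$ land in the announced target sets. The correspondences of vertices, faces and labels are essentially built into the constructions and should be read off along the way, so the real content is the two composition identities.

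First I would verify the easy direction, $\Psi\circ\Phi=\mathrm{id}_\cB$. Fix $B\in\cB$ and set $H=\Phi(B)$, with star-representation $S$. The key observation is that the faces of $S$ are exactly the light faces of $H$, and each such face can be identified with a unique local min vertex $w$ of $B$: indeed, when applying the BDG rules in a face $f$ of $B$, the corners of $B$ around a local min $w$ are never used to attach a new black-vertex edge, so after erasing the local min and all edges of $B$, the region that was swept out around $w$ becomes a single light face of $H$. I would check that the vertices incident to that light face are precisely the (non local min) neighbors-at-distance-two visible from $w$, all of label $\ell(w)+1$ or more, so that $\mini(f)=\ell(w)+1$ and hence $v_f$ (reinserted by $\Psi$ with label $\mini(f)-1=\ell(w)$) recovers $w$ with its correct label. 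Then I would trace through the leg-attachment rule of $\Psi$ inside $f$: a corner at a white vertex $v$ with $\ell(v)>\mini(f)$ gets joined to the next corner of label $\ell(v)-1$ in ccw order, while a corner at a vertex of label $\mini(f)$ gets joined to $v_f=w$. Comparing with the BDG rule used to build $S$ from $B$ — which, traversing $f$ clockwise, joins $v_f$ to $u$ for each descending edge $\{u,v\}$ — one sees these are inverse prescriptions: reading the contour of $f$ the two rules reconstruct exactly the edges of $B$ incident to $f$, including the edges down to the local min. Doing this consistently in every light face recovers all of $B$.

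For the converse, $\Phi\circ\Psi=\mathrm{id}_\cH$, fix $H\in\cH$ with star-representation $S$, and let $B=\Psi(H)$. Now the local min of $B$ are exactly the reinserted vertices $v_f$, one per light face of $H$, so erasing them and all edges of $B$ and then re-applying the BDG rules should give back $S$. The verification is again face-by-face in $H$: inside each light face $f$, the edges that $\Psi$ creates form, around each white corner, a ``downward staircase'' toward $v_f$; these are precisely the descending edges that the BDG rule of $\Phi$ selects when sweeping $f$ clockwise, and the black vertex $v_f'$ that $\Phi$ places in $f$ during $\Psi$-reconstruction coincides with the original black vertex of $S$ sitting in the dark face incident there. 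One must also check the bookkeeping on dark faces: a dark face of $H$ of cw-type $\tau$ corresponds under $\Psi$ to a face of $B$ whose cw-type is obtained by inserting, between consecutive labels $i,j$ with $j\ge i$, the rising run $i-1,\dots,j-1$ coming from the leg attachments — that is exactly the lower completion $\lctau$ — and applying $\Phi$ to such a face re-extracts $\tau$ as the clockwise list of labels reached by descending edges. This is where the combinatorics of \Lu cyclic sequences and of the completions introduced in Section~\ref{sec:slm} does its job.

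The main obstacle, I expect, is purely topological rather than local: matching up the \emph{global} incidence structure. Locally, in each face, the inverse-rule check is a finite bookkeeping exercise; but one must be sure that the faces of $S$ are in bijection with the light faces of $H$ and with the local min of $B$ in a way that is compatible across the whole sphere, so that gluing the per-face reconstructions yields a genuine map and not merely a disjoint union or a map with the wrong face set. Fortunately, connectivity of $S$ (hence of $H$) and of $B$ is exactly what Claims~\ref{claim:Smap} and~\ref{claim:Bmap} supply, so I would invoke those and then argue that an Euler-characteristic / degree count forces the face–vertex correspondence to be the claimed one once the local reconstructions agree. The remaining assertions — non local min vertices of $B$ matching vertices of $H$ with the same label, local max of $B$ matching right local max of $H$, and the $\tau\leftrightarrow\lctau$ rule for dark faces versus faces — then follow directly from the face-by-face analysis above, since the notions of ``right neighbor'' and ``right local max'' in $H$ translate, under the identification of $S$'s faces with $B$'s local min, precisely into ``neighbor'' and ``local max'' in $B$.
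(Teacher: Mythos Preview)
Your proposal confuses the roles of the two compositions and glosses over the genuinely non-trivial step. In the paper, $\Phi\circ\Psi=\mathrm{Id}$ is the quick direction: one superimposes $S$ (the star-representation of $H$) with $B=\Psi(H)$ and observes, locally around each \emph{black} vertex of $S$, that the edges created by $\Psi$ close up into a single face of $B$ (Figure~\ref{fig:psi_face}); since these faces tile the sphere, $B$ is a map, and the BDG rule applied in each such face manifestly recovers the incident star edges, so $\Phi(B)=H$. Your argument for this direction is organized around \emph{light} faces instead, and the sentence ``the black vertex $v_f'$ that $\Phi$ places in $f$\ldots coincides with the original black vertex of $S$ sitting in the dark face incident there'' does not type-check: $\Phi$ places one black vertex per face of $B$, and a given light face of $H$ is typically bordered by several dark faces.

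The direction you call ``easy'', $\Psi\circ\Phi=\mathrm{Id}$, is in fact the hard one. Starting from $B$, the BDG rules produce $S$, and one must show that the leg-attachment rule of $\Psi$ reproduces every edge of $B$. But the edges of $B$ do not lie on the contour of a face $f$ of $S$ --- they are \emph{chords inside} $f$, and a single face of $S$ typically contains many such edges. So ``reading the contour of $f$'' does not tell you where a leg lands. The missing lemma is Claim~\ref{claim:e}: for each edge $e=\{u,v\}$ of $B$ inside $f$ (with $\ell(u)=i$, $\ell(v)=i-1$, and $e$ not incident to the local min $v_f$), one must prove that (i) $v_f$ lies on the \emph{left} side $L_e(f)$ of $e$, and (ii) on the right side $R_e(f)$, $v$ has a unique incident corner and every other white corner has label $\geq i$. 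Only this guarantees that the $\Psi$-rule (``go to the next corner of label $\ell-1$ in ccw order'') hits $v$ rather than some other vertex of label $i-1$, or $v_f$. Your phrase ``one sees these are inverse prescriptions'' is precisely where this argument is needed and absent.
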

The proof that $\Phi$ and $\Psi$ are inverse of each other is delayed to Section~\ref{sec:proof_bij}. 
The parameter-correspondence follows rather directly from the way the constructions $\Phi$ and $\Psi$ 
are defined. More precisely, the fact that each face $f$ of $H$ corresponds to a local min of $B$ of label $\mini(f)-1$
follows from the definition of $\Psi$, and the fact that each dark face of $H$ corresponds to a face of $B$ of cw-type 
$\lctau$ follows from the description of $\Phi$ (see Figures~\ref{fig:type} and~\ref{fig:rules}). 
Finally, if $v'\in B$ is neither a local min
 nor a local max, then the corresponding $v\in H$ is not a right local max (see Figure~\ref{fig:local_max}(a)), whereas if $v$ is a local
max of $B$, then any right neighbor $u$ of $v$ satisfies $\ell(u)\leq \ell(v)$ (see Figure~\ref{fig:local_max}(b)), 
so that $v$ is a right local max in $H$. 

\begin{figure}
\begin{center}
\includegraphics[width=10cm]{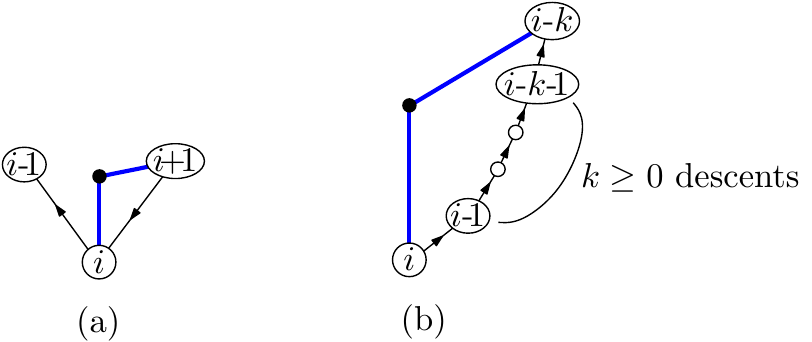}
\end{center}
\caption{(a) A vertex $v$ of $B$ that is neither a local min nor local max is incident to at least one face
of $B$ of the type displayed here, hence has a right neighbor in $H$
of larger label, so that $v$ is not a right local max in $H$. (b) If a vertex $v$ of $B$ is a local max, 
then any right neighbor of $v$ in $H$ has smaller (or equal) label, hence $v$ is a right local max
in $H$.}
\label{fig:local_max}
\end{figure}

\begin{rmk} \label{rmk:mab}
  By restricting to the elements of $\cB$ whose all faces have degree
  $4$, we recover the bijection of \cite[Theorem 1]{AmBudd}
  between suitably labelled quadrangulations and well-labelled maps
  (identified to hypermaps by blowing each of their edges into a dark
  face of degree $2$). Actually, this bijection may also be viewed as
  equivalent to the Miermont bijection in the planar case: indeed, for
  a fixed quadrangulation $Q$, the data of a suitable labelling is
  equivalent to the data of a set of ``sources'' and of a ``delay
  vector'' in the terminology of \cite{Miermont2009}. More precisely, taking the
  local min of the suitable labelling as sources, and their labels as
  delays, it is not difficult to check that the label of any other
  vertex is given by the Miermont prescription, namely
  \begin{equation}
    \ell(v) = \min_{u \text{ local min}} ( \mathrm{dist}(u,v) + \ell(u) )
  \end{equation}
  where $\mathrm{dist}$ denotes the graph distance in $Q$.
\end{rmk}

\subsection{Mirror bijection}
\label{sec:mirror}
We describe here a ``mirror'' formulation of the bijection $\Phi$. Define a \emph{mirror-well-labelled hypermap}
as a vertex-labelled hypermap $H$ such that the ``mirror'' of $H$ is well-labelled. 
More precisely, 
for any edge $e=\{v,u\}$ ---with the dark face incident to $e$ on the \emph{left} of $e$ traversed from $v$ to $u$---
we have $\ell(u)\geq \ell(v)-1$. Denote by $\mH$ the family of mirror-well-labelled hypermaps. 
The notions of right-neighbor, right local min and right local max are defined in the same way as for
the family $\cH$ (we do not take a mirror definition). 
For a dark face $f$ of $H\in\mH$, define the \emph{ccw-type} of $f$ as the cyclic list 
of labels of the vertices in counterclockwise order around $f$. 
Let $B\in\cB$. Place a vertex $v_f$ inside each face $f$ of $B$. 
Then apply the so-called \emph{complementary rules}
(shown in Figure~\ref{fig:rules} right-part)  
in $f$, i.e., when traversing $f$ clockwise, for each ascending edge $e=\{u,v\}$, insert a new edge from $v_f$ to $u$ 
(note that the vertices of $B$ not incident to any of these new edges are exactly the local max of $B$). 
Then erase all the local max of $B$ and all edges of $B$. Call $S$ the obtained figure, which is a bipartite map, 
and $H$ the mirror-well-labelled hypermap having $S$ as star-representation.  Let $\Phi^-$ be the mapping that associates
$H$ to $B$. Let $\mathrm{opp}$ 
be the mapping (operating on any $B\in\cB$ or $H\in\cH$)  
that replaces the label of each vertex by its opposite (note that $\mathrm{opp}$ maps
$\cB$ to $\cB$ and $\cH$ to $\mH$). 
In fact, given the fact that the complementary rules are the opposite of the BDG rules, we have
$\Phi^-=\mathrm{opp}\circ \Phi\circ \mathrm{opp}$.  Hence, as a consequence of Theorem~\ref{theo:bijmobile} we obtain:

\begin{coro}\label{coro:bij_anti_mobile}
The mapping $\Phi^-$ is a bijection  between $\cB$ and $\mH$. 
For $B\in\cB$ and $H=\Phi^-(B)$, each vertex $v$ of $H$ corresponds to a non local max vertex 
$v'$ of $B$ of the same label, and $v$ is a right local min in $H$ iff $v'$ is a local min in $B$, 
each face $f$ of $H$ corresponds to a local max vertex of $B$ of label $\maxi(f)+1$, 
and each dark face of $H$ of ccw-type $\tau$
corresponds to a face of $B$ of ccw-type $\uctau$.
\end{coro}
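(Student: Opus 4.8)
The plan is to derive Corollary~\ref{coro:bij_anti_mobile} directly from Theorem~\ref{theo:bijmobile} via the conjugation identity $\Phi^-=\mathrm{opp}\circ\Phi\circ\mathrm{opp}$ already recorded in the text, so essentially no new combinatorial work is needed---only a careful bookkeeping of how the operation $\mathrm{opp}$ transports each notion through the dictionary. First I would note that $\mathrm{opp}$ is an involution on $\cB$, so $\mathrm{opp}(\cB)=\cB$, and that by Theorem~\ref{theo:bijmobile} $\Phi$ is a bijection $\cB\to\cH$; since $\mathrm{opp}$ maps $\cH$ to $\mH$ (and, being an involution on vertex-labelled hypermaps, is a bijection $\cH\to\mH$), the composite $\Phi^-=\mathrm{opp}\circ\Phi\circ\mathrm{opp}$ is a bijection $\cB\to\mH$, with inverse $\mathrm{opp}\circ\Psi\circ\mathrm{opp}$. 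This gives the first sentence.

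Next I would track the parameter correspondence term by term. Fix $B\in\cB$; write $B'=\mathrm{opp}(B)\in\cB$, $H'=\Phi(B')\in\cH$, and $H=\Phi^-(B)=\mathrm{opp}(H')$. The key observations are: (i) $\mathrm{opp}$ exchanges local min and local max of a suitably labelled map, so the non-local-min vertices of $B'$ are exactly the (opposites of the) non-local-max vertices of $B$; (ii) $\mathrm{opp}$ negates all labels, so ``same label'' in Theorem~\ref{theo:bijmobile} becomes ``same label'' again after applying $\mathrm{opp}$ twice; (iii) $\mathrm{opp}$ exchanges the roles of right local max and right local min (since negating labels reverses the inequality $\ell(v)\leq\ell(u)$ defining a right local max into $\ell(v)\geq\ell(u)$, the defining condition for a right local min, the notion of right neighbor being unchanged); (iv) $\mathrm{opp}$ exchanges light faces with the same objects but turns $\mini(\cdot)$ into $-\maxi(\cdot)$, so a light face $f$ of $H$ with $\maxi(f)=m$ comes from a light face $f'$ of $H'$ with $\mini(f')=-m$, which by Theorem~\ref{theo:bijmobile} corresponds to a local min of $B'$ of label $\mini(f')-1=-m-1$, i.e. to a local max of $B$ of label $m+1=\maxi(f)+1$; (v) for dark faces, $\mathrm{opp}$ negates every entry of a cyclic label list and reverses clockwise into counterclockwise is \emph{not} needed---rather, negating the cw-type of $f$ in $H$ gives the cw-type of the corresponding face of $H'$, which by Theorem~\ref{theo:bijmobile} equals $c^{\downarrow}$ of the cw-type of the matching face of $B'$, and one checks that negating $c^{\downarrow}(\sigma)$ and relating it to $c^{\uparrow}$ of $-\sigma$ read in the opposite cyclic order produces exactly the claimed statement: a dark face of $H$ of ccw-type $\tau$ corresponds to a face of $B$ of ccw-type $\uctau$.

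The one point that requires genuine (if short) care---the step I expect to be the main obstacle---is item (v), the compatibility of the completion/complement operations with $\mathrm{opp}$. I would want to verify cleanly that for a \Lu cyclic sequence $\sigma$, if $\sigma'$ denotes the sequence obtained by negating every entry of $\sigma$ and reading it in the reverse cyclic order, then $\sigma'$ is again a \Lu cyclic sequence and $c^{\uparrow}(\sigma') = $ (negate and reverse) of $c^{\downarrow}(\sigma)$; equivalently, that the upper and lower completions are exchanged under ``negate all labels and reverse orientation.'' This is exactly the symmetry already illustrated in Figure~\ref{fig:type} (the lower completion of $\tau$ read clockwise is the upper completion of $\lbtau$ read counterclockwise), so I would simply invoke that remark, combined with the fact---also stated in the text---that passing to the upper complement and to the lower complement are mutually inverse operations. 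Matching ``cw-type'' versus ``ccw-type'' labels against this symmetry then yields precisely the last clause of the corollary, and the proof is complete. I would close with a one-line sanity check that the statement is consistent with applying $\mathrm{opp}$ once more (which recovers Theorem~\ref{theo:bijmobile}).
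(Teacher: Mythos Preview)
Your approach is exactly the paper's: it simply records $\Phi^-=\mathrm{opp}\circ\Phi\circ\mathrm{opp}$ and declares the corollary a consequence of Theorem~\ref{theo:bijmobile}, without spelling out the bookkeeping you carry out in items (i)--(v). Your tracking is correct; the only quibble is that the symmetry you need in item~(v), namely $c^{\uparrow}(\tau)=\mathrm{rev}\bigl(-c^{\downarrow}(\mathrm{rev}(-\tau))\bigr)$, is not literally what Figure~\ref{fig:type} illustrates (that figure relates the lower completion of $\tau$ to the upper completion of the \emph{complement} $\lbtau$, not of the negated-and-reversed sequence), but the identity you actually need is an immediate consequence of the definitions of $c^{\uparrow}$ and $c^{\downarrow}$, so this is harmless.
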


\subsection{Mobiles, pointed bipartite maps, and pointed hypermaps}
\label{sec:mobspec}
Define a \emph{mobile} as the star-representation of a well-labelled hypermap with a unique light face and
with minimal label $1$. In other words a mobile
is a bipartite plane tree with black unlabelled vertices and white labelled vertices, such that the minimal label is $1$ 
and for any two consecutive neighbors $v,u$ in clockwise order around a black vertex, 
$\ell(u)\geq \ell(v)-1$. The following definitions are inherited from the concepts in
the associated hypermap with a unique light face.  
For any black vertex $b$ in a mobile, define the \emph{cw-type} of $b$
as the  cyclic sequence given by the labels of the neighbors in clockwise order around $b$.
A white vertex $v$ is called a \emph{right neighbor} of a white vertex $u$, if 
$u$ and $v$ are consecutive in counterclockwise order around a black vertex (which is their
unique common neighbor). And a white vertex $u$ is called a \emph{right local min} 
(resp. \emph{right local max}) in the mobile if any right neighbor $v$ of $u$
satisfies $\ell(v)\geq\ell(u)$ (resp. $\ell(v)\leq\ell(u)$). 

\begin{claim}\label{claim:geodB}
Let $B$ be a bipartite map with a pointed vertex $v$. Then there is a unique suitable labelling of the vertices of $B$ such that 
 $v$ is the unique local min, and $\ell(v)=0$. This labelling is the \emph{geodesic labelling} with respect to $v$,
 i.e.\ such that $\ell(u)$ is the graph distance $\mathrm{dist}(v,u)$ from $v$ to $u$. 
\end{claim}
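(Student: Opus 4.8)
The plan is to show existence, uniqueness, and the identification with the geodesic labelling essentially in one stroke, by proving that \emph{any} suitable labelling of $B$ in which $v$ is the unique local min with $\ell(v)=0$ must coincide with the graph distance function $d_v(u):=\mathrm{dist}(v,u)$, and then checking that $d_v$ itself is such a labelling.

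First I would record the two elementary facts about suitable labellings on a bipartite map $B$. (i) If $\ell$ is any suitable labelling then $|\ell(u)-\ell(u')|\le \mathrm{dist}(u,u')$ for all vertices, by summing the constraint $|\ell-\ell'|=1$ along a shortest path; in particular $|\ell(u)-\ell(v)|\le d_v(u)$, i.e.\ $\ell(u)\ge -d_v(u)$ and, if moreover $\ell(v)=0$, also $\ell(u)\le d_v(u)$. (ii) A vertex $u$ is a local min for $\ell$ iff all its neighbours have label $\ell(u)+1$; so ``$u$ is not a local min'' means $u$ has at least one neighbour of label $\ell(u)-1$. Now suppose $\ell$ is a suitable labelling with $v$ the unique local min and $\ell(v)=0$. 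I claim $\ell\ge 0$ everywhere: if some vertex had negative label, pick one, say $u$, with $\ell(u)$ minimal; then every neighbour of $u$ has label $\ell(u)\pm1\ge \ell(u)+1$ by minimality (equality $\ell(u)-1$ is impossible as that would be even smaller), so $u$ is a local min, forcing $u=v$, contradicting $\ell(u)<0=\ell(v)$. So $\ell\ge 0$, and $v$ is its unique zero (any other zero would again be a local min by the same argument, since its neighbours have labels $\pm1$, hence $+1$). Next, for any $u\ne v$, since $u$ is not a local min it has a neighbour $u_1$ with $\ell(u_1)=\ell(u)-1\ge 0$; if $u_1\ne v$ repeat to get $u_2$ with $\ell(u_2)=\ell(u)-2$, and so on. Because labels are nonnegative integers this descent must terminate, and it can only terminate at a vertex of label $0$, i.e.\ at $v$. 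This exhibits a walk from $u$ to $v$ of length exactly $\ell(u)$, so $d_v(u)\le \ell(u)$. Combined with (i), $d_v(u)=\ell(u)$, proving uniqueness and the identification with the geodesic labelling.

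It remains to check existence, i.e.\ that $d_v$ is itself a suitable labelling with $v$ as unique local min. Suitability: for an edge $\{u,u'\}$ we always have $|d_v(u)-d_v(u')|\le 1$, and equality (rather than $0$) holds because $B$ is bipartite, so $u$ and $u'$ lie in different colour classes and $d_v(u),d_v(u')$ have opposite parities. Clearly $d_v(v)=0$. Finally, $v$ is a local min for $d_v$ (its neighbours are at distance $1$), and no other vertex $u$ is: a shortest path from $v$ to $u$ enters $u$ through a neighbour at distance $d_v(u)-1<d_v(u)$, so not all neighbours of $u$ have larger label. Hence $v$ is the unique local min, completing the proof.

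I do not expect any serious obstacle here; the only point that needs a little care is making sure the descending-neighbour argument is phrased so that the descent cannot get stuck or cycle — which is handled by the observation that labels are nonnegative integers strictly decreasing along the constructed walk, and that a label-$0$ vertex other than $v$ cannot occur. Bipartiteness is used exactly once, to upgrade the inequality $|d_v(u)-d_v(u')|\le 1$ across an edge to the required equality $|d_v(u)-d_v(u')|=1$.
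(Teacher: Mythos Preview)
Your proof is correct and follows essentially the same approach as the paper's: build a label-decreasing path from any vertex to the unique local min $v$ to get $\ell(u)\ge\mathrm{dist}(v,u)$, combine with the trivial Lipschitz bound $\ell(u)\le\mathrm{dist}(v,u)$, and check separately that the distance function is itself a suitable labelling with $v$ the unique local min. Your version is more careful in justifying why the descent terminates at $v$ (via the nonnegativity argument), a detail the paper leaves implicit.
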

\begin{proof}
It is clear that the geodesic labelling satisfies these properties. Conversely, for any labelling satisfying these properties, each vertex $u$ has a path to $v$ that decreases in label, hence of length $\ell(u)$.
So $\ell(u)\geq \mathrm{dist}(v,u)$. Moreover, for any suitable labelling one has trivially
$\ell(u)\leq \mathrm{dist}(v,u)$ (the sequence of labels along a geodesic path from $v$ to $u$
increases by at most $1$ at each edge).  Hence $\ell(u)=\mathrm{dist}(v,u)$ for any vertex $u$. 
\end{proof}

\begin{figure}
\begin{center}
\includegraphics[width=13cm]{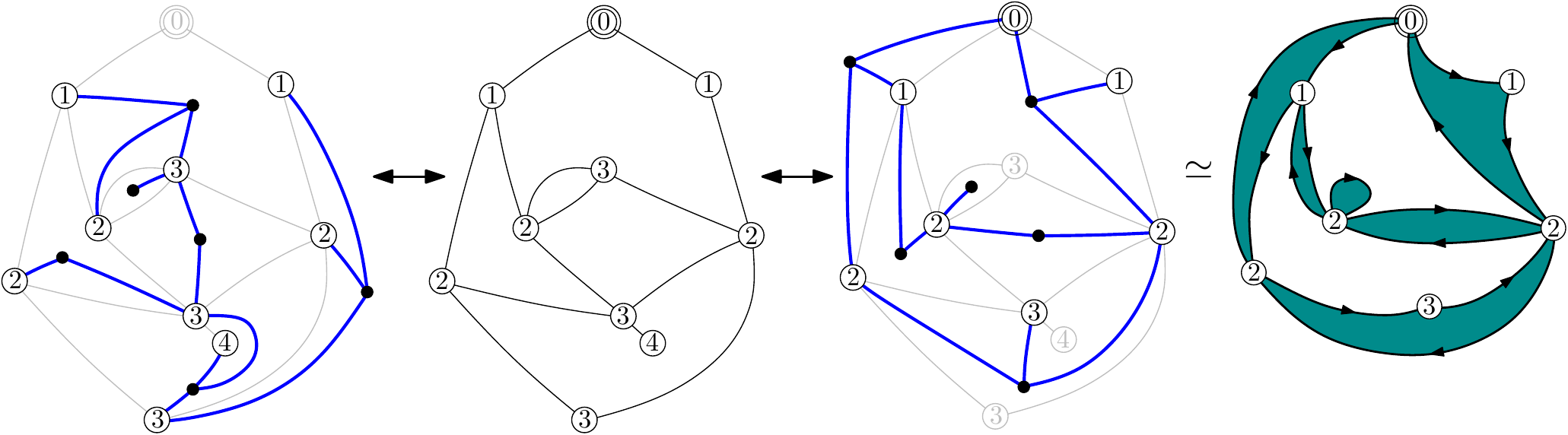}
\end{center}
\caption{A vertex-pointed map endowed with its geodesic labelling, 
on its left the associated mobile, on its
right the associated vertex-pointed hypermap (endowed with its geodesic labelling).}
\label{fig:bijpointed}
\end{figure}

Applying Theorem~\ref{theo:bijmobile} to the subfamily of $\cB$ with a unique local min vertex $v$ and with $\ell(v)=0$, we recover
the following result from~\cite{BDG04} (see the left part of Figure \ref{fig:bijpointed} for an illustration):

\begin{prop}\label{prop:point_bip_mobiles}
There is a bijection between vertex-pointed bipartite maps and mobiles with the following properties.
For $B$ a vertex-pointed bipartite map, endowed
with its geodesic labelling, and $M$
the corresponding mobile, each white vertex $v$ of $M$ corresponds to a non-pointed vertex $v'$ 
of $B$ of the same label, and $v$ is a right local max of $M$ iff $v'$ is a local max of $B$.  
And each black vertex of $M$ of cw-type $\tau$ corresponds to a face of $B$ of cw-type $\lctau$. 
\end{prop}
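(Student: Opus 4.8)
The plan is to obtain Proposition~\ref{prop:point_bip_mobiles} as a direct specialization of Theorem~\ref{theo:bijmobile}. First I would observe that a mobile is, by the definition in Section~\ref{sec:mobspec}, precisely the star-representation of a well-labelled hypermap $H$ with a unique light face and minimal label $1$; denote by $\cH_1$ this subfamily of $\cH$. On the other side, consider the subfamily $\cB_0\subseteq\cB$ of suitably labelled maps with a unique local min $v$ and with $\ell(v)=0$. By Theorem~\ref{theo:bijmobile}, the light faces of $H=\Phi(B)$ are in bijection with the local min of $B$, so $B\in\cB_0$ forces $H$ to have exactly one light face $f$; moreover that light face has $\mini(f)-1=\ell(v)=0$, hence $\mini(f)=1$, so $H\in\cH_1$. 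Conversely $\Psi$ sends $\cH_1$ into $\cB_0$ for the same reason. Thus $\Phi$ restricts to a bijection $\cB_0\to\cH_1$.

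Next I would invoke Claim~\ref{claim:geodB}: vertex-pointed bipartite maps $(B,v)$ are in canonical bijection with elements of $\cB_0$, the correspondence being to equip $B$ with its geodesic labelling from $v$ (existence and uniqueness being exactly the content of the claim). Composing this with the bijection $\cB_0\to\cH_1$ of the previous paragraph, and identifying $\cH_1$ with the set of mobiles via star-representation, yields the announced bijection between vertex-pointed bipartite maps and mobiles.

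Finally, the parameter correspondences are read off directly from Theorem~\ref{theo:bijmobile}. Each white vertex $v$ of the mobile $M$ corresponds to a vertex $v$ of $H$, hence to a non-local-min vertex $v'$ of $B$ of the same label; since the unique local min of $B$ is the pointed vertex, "non-local-min vertex of $B$" is the same as "non-pointed vertex of $B$". The statement that $v$ is a right local max of $M$ iff $v'$ is a local max of $B$ is immediate from the corresponding statement in Theorem~\ref{theo:bijmobile}, once one notes (as in the definition of mobile in Section~\ref{sec:mobspec}) that the notions of right neighbor and right local max in the mobile are inherited from those in $H$. Likewise, each black vertex of $M$ is a dark face of $H$, and Theorem~\ref{theo:bijmobile} says a dark face of cw-type $\tau$ corresponds to a face of $B$ of cw-type $\lctau$, which is exactly the last assertion.

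There is essentially no obstacle here beyond bookkeeping: the only point requiring a line of care is checking that the geodesic-labelling bijection of Claim~\ref{claim:geodB} is exactly the restriction making $\Phi$ land in $\cH_1$, i.e.\ that the conditions "unique local min, labelled $0$" on the $\cB$ side match "unique light face, minimal label $1$" on the $\cH$ side; this is handled by the light-face$\leftrightarrow$local-min correspondence with the label shift $\mini(f)-1$. Everything else is a transcription of Theorem~\ref{theo:bijmobile} through these identifications.
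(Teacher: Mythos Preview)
Your proposal is correct and follows exactly the paper's approach: the paper obtains Proposition~\ref{prop:point_bip_mobiles} in one line by ``applying Theorem~\ref{theo:bijmobile} to the subfamily of $\cB$ with a unique local min vertex $v$ and with $\ell(v)=0$'', relying on Claim~\ref{claim:geodB} for the identification with vertex-pointed bipartite maps. You have simply spelled out the bookkeeping (the light-face/local-min correspondence with the label shift $\mini(f)-1$, and the transcription of the parameter correspondences) that the paper leaves implicit.
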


Given an hypermap $H$, define the \emph{canonical orientation} of $H$ as the orientation where each edge is directed so as to 
have its incident dark face on its right. If $H$ has a pointed vertex $v$, 
the \emph{geodesic labelling} of $H$ with 
respect to $v$ is the labelling of vertices where $\ell(u)$ gives the length of a shortest directed path 
(in the canonical orientation) from $v$ to $u$. Similarly as in Claim~\ref{claim:geodB} (and with the same proof arguments) we have:

\begin{claim}\label{claim:geodH}
Let $H$ be an hypermap with a pointed vertex $v$. Then there is a unique mirror-well-labelling of the vertices of $H$ such that 
 $v$ is the unique right local min, and $\ell(v)=0$. This labelling is the geodesic labelling with respect to $v$. 
\end{claim}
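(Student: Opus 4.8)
The plan is to mimic the proof of Claim~\ref{claim:geodB}, transporting each of its two inequalities to the directed setting via the canonical orientation. First I would verify that the geodesic labelling indeed satisfies the required properties. The mirror-well-labelled condition asks that for any edge $e=\{v,u\}$ with its incident dark face on the \emph{left} when traversed from $v$ to $u$, one has $\ell(u)\geq\ell(v)-1$; equivalently, in the canonical orientation this is an edge directed from $u$ to $v$, so $\ell(v)\leq\ell(u)+1$, which is exactly the triangle-type inequality satisfied by shortest directed path lengths (appending the edge $u\to v$ to a shortest path to $u$ gives a directed path to $v$ of length $\ell(u)+1$). So the geodesic labelling lies in $\mH$. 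Moreover $v$ is a right local min: a right neighbour $u$ of $v$ is joined to $v$ by an edge which, traversed from $u$ to $v$, has a dark face on its right, i.e.\ a canonically oriented edge $u\to v$; if some such $u$ had $\ell(u)<\ell(v)=0$ that would be absurd since all labels are $\geq 0$, so in fact $\ell(u)\geq 0=\ell(v)$ for every right neighbour, and clearly $\ell(v)=0$. (One should also note that $v$ is the \emph{unique} right local min, because any vertex $u\neq v$ has $\ell(u)\geq 1$ and, taking the last edge of a shortest directed path to $u$, which is canonically oriented, say $w\to u$, that edge exhibits $w$ as... — rather, one exhibits a right neighbour of $u$ with strictly smaller label: the edge $w\to u$ traversed from $u$ to $w$ is... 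I would set this up carefully so that $w$ is a right neighbour of $u$ with $\ell(w)=\ell(u)-1<\ell(u)$, hence $u$ is not a right local min.)

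Next I would prove uniqueness. Let $\ell$ be any mirror-well-labelling with $v$ the unique right local min and $\ell(v)=0$. The upper bound $\ell(u)\leq\operatorname{dist}_{\to}(v,u)$ is immediate from the local condition: along any directed path $v=w_0\to w_1\to\cdots\to w_k=u$ in the canonical orientation, each step $w_{i}\to w_{i+1}$ is an edge with dark face on its right, i.e.\ dark face on the left when traversed from $w_{i+1}$ to $w_i$, so the mirror-well-labelled inequality gives $\ell(w_i)\geq\ell(w_{i+1})-1$, i.e.\ $\ell(w_{i+1})\leq\ell(w_i)+1$; summing, $\ell(u)\leq k$, and taking the shortest such path gives the bound. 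For the reverse inequality $\ell(u)\geq\operatorname{dist}_{\to}(v,u)$, I would argue as in Claim~\ref{claim:geodB} that from any $u\neq v$ one can find a directed path to $v$ that strictly decreases in label at each step, hence has length at most $\ell(u)$: since $u$ is not a right local min, it has a right neighbour $w$ with $\ell(w)<\ell(u)$; the edge between them, being such that $\{w,u\}$ traversed from $w$ to $u$ has a dark face on its right, is canonically oriented $w\to u$ — so to build a decreasing directed path \emph{towards} $v$ I actually want to walk against these edges, which is not allowed. I would instead phrase it the other way: I want, from $u$, a right neighbour $w$ with $\ell(w)=\ell(u)-1$ \emph{such that the edge is oriented $u\to w$}. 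This requires a small combinatorial lemma about the link of $u$.

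The main obstacle, then, is precisely this last point: showing that every non-right-local-min vertex $u$ admits an \emph{outgoing} canonically oriented edge $u\to w$ with $\ell(w)=\ell(u)-1$, equivalently that ``$u$ is not a right local min'' can be upgraded from ``some right neighbour has label $\leq\ell(u)-1$'' to ``some vertex reachable by an outgoing canonical edge has label exactly $\ell(u)-1$''. The key input is that the cw-type of every dark face is a \Lu cyclic sequence: reading the labels clockwise around the dark face incident to the outgoing corner at $u$, consecutive labels drop by at most $1$, and the edges of that face are exactly the canonically oriented edges with that face on the right — so the clockwise-next vertex $w$ after $u$ around this dark face satisfies $\ell(w)\geq\ell(u)-1$, and is joined to $u$ by a canonical edge. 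Combined with the mirror-well-labelled inequality applied the other way around the same face (the clockwise-\emph{previous} vertex has label $\geq\ell(u)-1$ too, being a right neighbour of $u$), and the hypothesis that $u$ is not a right local min (so \emph{some} right neighbour has label $<\ell(u)$, forcing $=\ell(u)-1$), I would chase around the contour of that dark face — using that the type is \Lu cyclic, so once the label has dropped it can only have fallen by exactly $1$ at the relevant step — to locate an outgoing edge $u\to w$ with $\ell(w)=\ell(u)-1$. Iterating produces a canonically oriented path $u\to\cdots\to v$ strictly decreasing in label, of length $\ell(u)$, giving $\operatorname{dist}_{\to}(v,u)\leq\ell(u)$ and hence equality; this also re-proves that $v$ is the \emph{unique} right local min. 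I expect the bookkeeping around the dark-face contour, and the case analysis for when $u$ is a right local max in some incident light face, to be the only genuinely fiddly part; everything else is parallel to Claim~\ref{claim:geodB}.
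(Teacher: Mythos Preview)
Your overall plan---mirror the proof of Claim~\ref{claim:geodB} via the canonical orientation---is exactly what the paper intends (it says the claim follows ``with the same proof arguments'').  Your verification that the geodesic labelling is a mirror-well-labelling and that $v$ is its unique right local min is fine, as is the upper bound $\ell(u)\le \mathrm{dist}_{\to}(v,u)$.

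The lower-bound argument, however, goes off track at the point where you write ``so to build a decreasing directed path \emph{towards} $v$ I actually want to walk against these edges, which is not allowed''.  You have already found what you need and are misreading the direction.  If $u\neq v$ is not a right local min, pick a right neighbour $w$ with $\ell(w)<\ell(u)$; the mirror-well-labelled inequality applied to the canonical edge $w\to u$ gives $\ell(w)\ge \ell(u)-1$, so in fact $\ell(w)=\ell(u)-1$.  Iterating produces a sequence $u=u_0,u_1,\ldots$ with each $u_{k+1}$ a right neighbour of $u_k$ and $\ell(u_{k+1})=\ell(u_k)-1$; by finiteness and strict decrease this terminates at the unique right local min $v$ after exactly $\ell(u)$ steps.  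Now observe that the canonical edges involved are $u_{k+1}\to u_k$, so reading the sequence \emph{backwards} yields
\[
v=u_{\ell(u)}\to u_{\ell(u)-1}\to\cdots\to u_1\to u_0=u,
\]
a canonically oriented path \emph{from $v$ to $u$} of length $\ell(u)$, giving $\mathrm{dist}_{\to}(v,u)\le\ell(u)$ directly.  There is no need to produce an \emph{outgoing} edge at $u$ with smaller label, and the whole \L{}ukasiewicz/dark-face-contour detour is unnecessary; worse, even if that detour succeeded it would yield a canonical path from $u$ to $v$, which bounds $\mathrm{dist}_{\to}(u,v)$ rather than the quantity $\mathrm{dist}_{\to}(v,u)$ you actually need.
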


Applying Corollary~\ref{coro:bij_anti_mobile} to the subfamily of $\mH$ with a unique local min vertex $v$ and with $\ell(v)=0$, we obtain
the following result (see the right part of Figure \ref{fig:bijpointed} for an illustration):

\begin{prop}\label{prop:bij_bip_hyp}
There is a bijection between vertex-pointed bipartite maps and vertex-pointed hypermaps with the following properties. 
For $B$ a vertex-pointed bipartite map endowed
with its geodesic labelling, and $H$
the corresponding vertex-pointed hypermap endowed with its geodesic labelling, 
each light face $f$ of $H$ corresponds to a local max vertex of $B$ of label $\maxi(f)-1$,
 each vertex of $H$
corresponds to a non local max vertex of $B$ of the same label (so the pointed vertices correspond to each other), and 
 each dark face of $H$ of ccw-type $\tau$ corresponds to a face of $B$ of ccw-type $\uctau$. 
\end{prop}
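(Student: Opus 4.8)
The plan is to derive this statement from Corollary~\ref{coro:bij_anti_mobile} by the same specialization that produced Proposition~\ref{prop:point_bip_mobiles} from Theorem~\ref{theo:bijmobile}, but now applied to the mirror bijection $\Phi^-$ and using Claims~\ref{claim:geodB} and~\ref{claim:geodH} on the two sides. First I would single out the subfamily $\cB_0\subseteq\cB$ of suitably labelled maps having a unique local min, whose label is $0$. By Claim~\ref{claim:geodB}, marking this local min gives a bijection between $\cB_0$ and vertex-pointed bipartite maps, with inverse ``endow with the geodesic labelling with respect to the marked vertex''.

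Next I would identify the image of $\cB_0$ under $\Phi^-$. By Corollary~\ref{coro:bij_anti_mobile}, $\Phi^-$ preserves labels, every vertex $v$ of $H=\Phi^-(B)$ corresponds to a non local max vertex $v'$ of $B$ of the same label, and $v$ is a right local min of $H$ iff $v'$ is a local min of $B$; since (for $B$ with at least two vertices) a local min of $B$ is never a local max, no right local min of $H$ is ``lost'' in this correspondence, so right local mins of $H$ match exactly the local mins of $B$. Hence $\Phi^-$ restricts to a bijection between $\cB_0$ and the subfamily $\mH_0\subseteq\mH$ of mirror-well-labelled hypermaps having a unique right local min, whose label is $0$. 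By Claim~\ref{claim:geodH}, marking this right local min identifies $\mH_0$ with vertex-pointed hypermaps, again with inverse ``endow with the geodesic labelling''. Composing the three bijections
\[
  \{\text{vertex-pointed bipartite maps}\}\ \longrightarrow\ \cB_0\ \xrightarrow{\ \Phi^-\ }\ \mH_0\ \longrightarrow\ \{\text{vertex-pointed hypermaps}\}
\]
yields the claimed bijection, and by construction the marked vertices correspond to each other.

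It then remains to transport the parameter correspondences, and here the key observation is that no extra relabelling is needed: if $B\in\cB_0$ comes from a vertex-pointed bipartite map with marked vertex $p$ (so $\ell(p)=0$ and $p$ is the unique local min), then in $H=\Phi^-(B)$ the vertex $v$ matching $p$ is the unique right local min and has label $\ell(v)=\ell(p)=0$, so by the uniqueness in Claim~\ref{claim:geodH} the labelling carried by $H=\Phi^-(B)$ \emph{is} the geodesic labelling of the corresponding vertex-pointed hypermap with respect to $v$. Therefore the three bullet points of Corollary~\ref{coro:bij_anti_mobile} apply verbatim to the bijection just constructed: each vertex of $H$ corresponds to a non local max vertex of $B$ of the same label (so the marked vertices, being a right local min and a local min respectively, do correspond); each light face $f$ of $H$ corresponds to a local max vertex of $B$ whose label is determined by $\maxi(f)$ precisely as in the corollary; and each dark face of $H$ of ccw-type $\tau$ corresponds to a face of $B$ of ccw-type $\uctau$.

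I do not expect a genuine obstacle here: the argument is the mirror analogue of the (essentially immediate) deduction of Proposition~\ref{prop:point_bip_mobiles}. The only point that needs a little care is checking that the geodesic-labelling correspondences on the two sides match up the subfamilies $\cB_0$ and $\mH_0$ intertwined by $\Phi^-$ — in particular that the ``unique right local min of label $0$'' condition cutting out $\mH_0$ is exactly the $\Phi^-$-image of the ``unique local min of label $0$'' condition cutting out $\cB_0$ — which is where Corollary~\ref{coro:bij_anti_mobile}'s correspondence between the local mins of $B$ and the right local mins of $H$, together with label preservation, does the work.
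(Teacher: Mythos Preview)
Your proposal is correct and follows exactly the paper's approach: the paper obtains Proposition~\ref{prop:bij_bip_hyp} in one line by ``applying Corollary~\ref{coro:bij_anti_mobile} to the subfamily of $\mH$ with a unique [right] local min vertex $v$ and with $\ell(v)=0$'', invoking Claim~\ref{claim:geodH} to identify that subfamily with vertex-pointed hypermaps (and implicitly Claim~\ref{claim:geodB} on the $\cB$ side). Your write-up simply spells out this specialization in more detail, including the observation that a local min of $B$ cannot be a local max, so that local mins of $B$ and right local mins of $H$ match up under $\Phi^-$.
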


\begin{figure}
\begin{center}
\includegraphics[width=12cm]{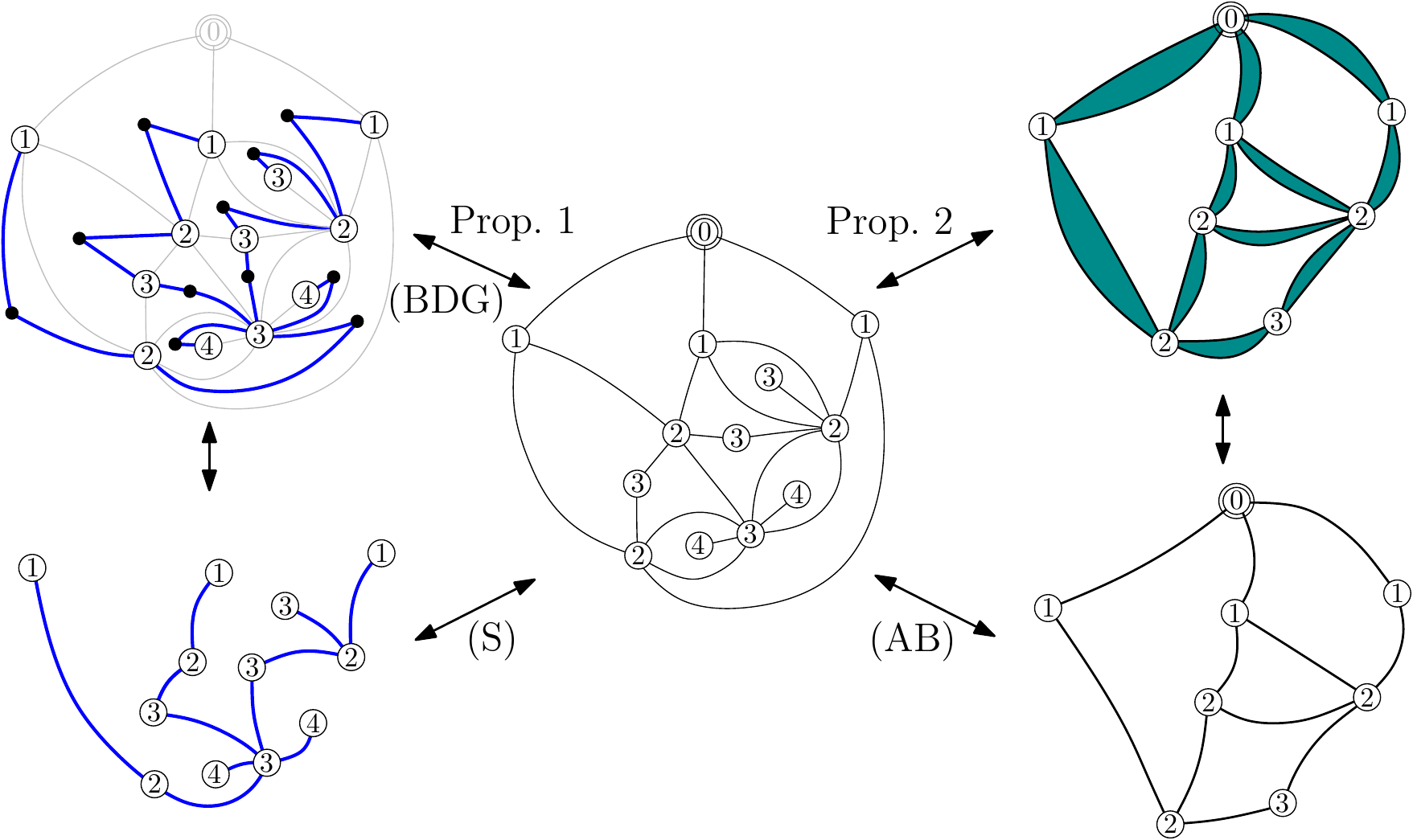}
\end{center}
\caption{In the case where the intermediate suitably labelled map is a quadrangulation, 
the mobile obtained by the BDG rules (bijection in~\cite{BDG04}, recovered in Proposition~\ref{prop:point_bip_mobiles}) 
simplifies to a well-labelled tree (with minimal label $1$), which corresponds
to Schaeffer's bijection~\cite{ChSc04,SchPhD} (symbol S in the diagram). 
Moreover, the pointed hypermap obtained by the complementary rules (Proposition~\ref{prop:bij_bip_hyp})
simplifies to a map, which corresponds to the bijection of Ambj\o rn and Budd~\cite{AmBudd} 
(symbol AB
in the diagram) between
pointed quadrangulations and pointed maps.
}
\label{fig:bij_ambjorn_budd}
\end{figure}

As shown in Figure~\ref{fig:bij_ambjorn_budd} (right part), 
in the case where all dark faces of the hypermap have degree~$2$, 
we recover the bijection of Ambj\o rn and Budd between pointed maps and
pointed quadrangulations.  

\begin{figure}
\begin{center}
\includegraphics[width=12cm]{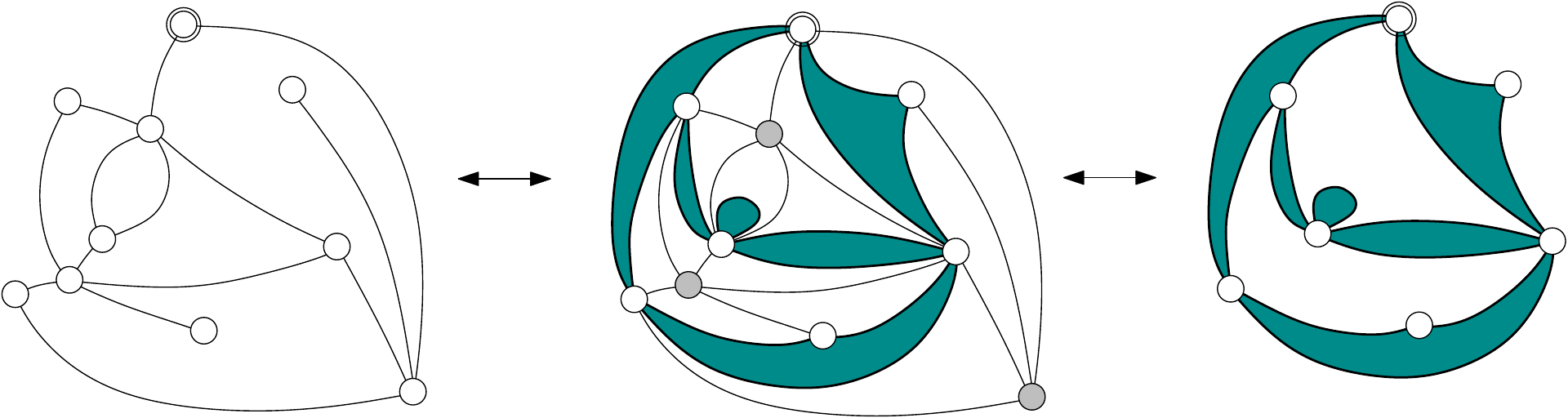}
\end{center}
\caption{The usual bijection between pointed bipartite maps and pointed hypermaps.
From left to right: color white (resp. gray) the vertices that are at even (resp. odd) distance
from the pointed vertex, then inside each face $f$ of the bipartite map insert a dark face that connects
the white corners of $f$. From right to left: insert a gray vertex $v_f$ inside each light 
face $f$ of the hypermap, and connect $v_f$ to all corners around $f$.}
\label{fig:bij_hypermap_classique}
\end{figure}

\begin{rmk}
There is already a classical bijection between (vertex-bicolored) 
bipartite maps and hypermaps
such that each face of degree $2s$ in the bipartite map corresponds to a dark face of degree $s$ in the corresponding
hypermap, see Figure~\ref{fig:bij_hypermap_classique}
(note that the bijection of Figure~\ref{fig:bij_hypermap_classique} is the application of $\Phi$ with the special suitable
labelling where every vertex at even distance from the pointed vertex has label $1$ and every 
vertex at odd distance has label $0$). 
 However, this bijection does not have the distance conservation property of 
Proposition~\ref{prop:bij_bip_hyp}. 
\end{rmk}

Composing the bijections of Propositions~\ref{prop:point_bip_mobiles} and~\ref{prop:bij_bip_hyp} we obtain (see Figure~\ref{fig:bijpointed}):

\begin{theo}\label{theo:hyp_mobiles}
There is a bijection between vertex-pointed hypermaps and mobiles with the following properties.  
For $H$ a vertex-pointed hypermap endowed with its
geodesic labelling, and $M$ the corresponding mobile, each face $f$ of $H$ 
corresponds to a right local max of $M$ of label $\maxi(f)+1$, 
each unpointed vertex of $H$ corresponds to a non right local max 
vertex of $M$ of the same label   
and each dark face of $H$ of ccw-type $\tau$ corresponds to a black vertex of $M$ of cw-type  
$\ubtau$.

In particular, for $p \geq 2$, the bijection restricts to a bijection
between vertex-pointed \emph{$p$-hypermaps} (hypermaps with all dark
faces of degree $p$) and \emph{$p$-mobiles} (mobile with all black
vertices of degree $p$).
\end{theo}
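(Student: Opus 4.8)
The plan is to realize the asserted bijection as the composition of the two bijections of Propositions~\ref{prop:point_bip_mobiles} and~\ref{prop:bij_bip_hyp}, whose common ``other side'' is the family of vertex-pointed bipartite maps equipped with their geodesic labelling. Explicitly, starting from a vertex-pointed hypermap $H$ with its geodesic labelling, first apply the inverse of the bijection of Proposition~\ref{prop:bij_bip_hyp} to obtain a vertex-pointed bipartite map $B$ --- which carries its geodesic labelling with respect to the marked vertex by Claim~\ref{claim:geodB}, the marked vertex being the unique local min and of label $0$ --- and then apply the bijection of Proposition~\ref{prop:point_bip_mobiles} to obtain a mobile $M$; running the two steps in the reverse order gives the inverse map. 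Being a composition of bijections, this is a bijection, so the entire content of the theorem is the transport of the parameters through the intermediate object $B$.

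For faces: by Proposition~\ref{prop:bij_bip_hyp} a light face $f$ of $H$ corresponds to a local max vertex of $B$, which by Proposition~\ref{prop:point_bip_mobiles} is in turn a right local max of $M$ of the same label; composing the two label prescriptions gives the announced value $\maxi(f)+1$. For vertices: Proposition~\ref{prop:bij_bip_hyp} identifies the vertices of $H$ with the non-local-max vertices of $B$ in a label-preserving way, with marked vertices corresponding, and the marked vertex of $H$ becomes the unique local min of $B$; hence the unpointed vertices of $H$ are exactly those vertices of $B$ that are neither a local min nor a local max, and by Proposition~\ref{prop:point_bip_mobiles} these correspond precisely to the white vertices of $M$ that are not right local maxima, again with equal labels. (The marked vertex of $H$ maps to the marked vertex of $B$, which is deleted when passing to $M$; it thus has no image in the mobile, in accordance with the statement, mobiles carrying no distinguished vertex.)

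The genuinely delicate step --- the one I expect to be the main obstacle --- is the dark-face correspondence, where the two propositions speak of cw-type on the mobile side and ccw-type on the hypermap side, so the completion and complement operations must be matched carefully. By Proposition~\ref{prop:bij_bip_hyp} a dark face of $H$ of ccw-type $\tau$ corresponds to a face $\phi$ of $B$ of ccw-type $c^{\uparrow}(\tau)$, hence of cw-type $c^{\uparrow}(\tau)$ read in reverse. By Proposition~\ref{prop:point_bip_mobiles}, $\phi$ corresponds to the black vertex $b$ of $M$ whose cw-type $\sigma$ has lower completion $c^{\downarrow}(\sigma)$ equal to the cw-type of $\phi$. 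It therefore suffices to check that $\sigma = \ubtau$ works, i.e.\ that $c^{\downarrow}(\ubtau)$ equals $c^{\uparrow}(\tau)$ read in reverse: this is exactly the identity illustrated in Figure~\ref{fig:type} applied to the \Lu cyclic sequence $\ubtau$ in place of $\tau$, together with the fact recalled in Section~\ref{sec:slm} that the upper and lower complement maps are mutually inverse, so that the lower complement of $\ubtau$ is $\tau$. Since $\ubtau$ is again a \Lu cyclic sequence and the lower completion is injective on such sequences, $b$ has cw-type $\ubtau$, as claimed.

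Finally, the ``in particular'' restriction is immediate once one notes that, for any \Lu cyclic sequence $\tau$ of length $p$, one has $|c^{\uparrow}(\tau)| = 2p$ and hence $|\ubtau| = p$ (and likewise for lower completions and complements): writing the $p$ cyclic consecutive differences of $\tau$, their sum vanishes while each is at least $-1$, so the completion inserts exactly $p$ new entries. Thus a dark face of degree $p$ in $H$ corresponds exactly to a black vertex of degree $p$ in $M$, and conversely, so the bijection restricts to one between vertex-pointed $p$-hypermaps and $p$-mobiles.
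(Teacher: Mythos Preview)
Your proof is correct and follows precisely the approach of the paper, which is simply to compose the bijections of Propositions~\ref{prop:point_bip_mobiles} and~\ref{prop:bij_bip_hyp}; you have spelled out the parameter-tracking (in particular the dark-face/black-vertex type correspondence via the duality of upper and lower complements, and the length count $|\ubtau|=|\tau|$) that the paper leaves to the reader. Note incidentally that the label ``$\maxi(f)-1$'' appearing in the statement of Proposition~\ref{prop:bij_bip_hyp} is a typo for $\maxi(f)+1$ (compare Corollary~\ref{coro:bij_anti_mobile}), which is the value you correctly arrive at.
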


Note that there is already a bijection in~\cite{BDG04} between vertex-pointed hypermaps and certain labelled decorated (multitype) 
plane trees. The bijection of~\cite{BDG04} (which also relies on the geodesic labelling of the 
pointed hypermap) 
 has the advantage that it keeps track of the degrees of light faces, an information
that is lost with the bijection of Theorem~\ref{theo:hyp_mobiles}. However the price to pay 
is that the decorated trees in~\cite{BDG04} are far more complicated than the mobiles of  Theorem~\ref{theo:hyp_mobiles}. 

\subsection{Specialization to constellations}
\label{sec:conspec}
For $p\geq 2$, a (planar) \emph{$p$-constellation} is a $p$-hypermap with
 all light faces of degree a multiple of $p$. Constellations are also characterized as $p$-hypermaps
that can be vertex-colored, with colors $0,1,\ldots,p-1$, such that in clockwise order
around any dark face the colors are $0,1,\ldots,p-1$ (seen this way they correspond
to certain factorizations, into $p$ factors, in the symmetric group).  
Equivalently, given a vertex-pointed hypermap $H$ endowed with its geodesic labelling, 
$H$  is a $p$-constellation iff the labels modulo $p$ of the vertices
in clockwise order around each dark face are $0,1\ldots,p-1$.  
A \Lu cyclic sequence of length $r$ is said to be \emph{descending}
if it has a unique rise and $r-1$ descents (by $1$). Define a 
\emph{$p$-descending mobile} as a $p$-mobile with all black vertices of descending cw-type. 
From the discussion above, a $p$-hypermap $H$, endowed with its geodesic labelling, 
is a $p$-constellation iff all its dark faces are of descending ccw-type.   
Hence, as a specialization of Theorem~\ref{theo:hyp_mobiles} we obtain:

\begin{prop}\label{prop:const1}
There is a bijection between vertex-pointed $p$-constellations and $p$-descending mobiles, with 
the same parameter-correspondence as in Theorem~\ref{theo:hyp_mobiles}. 
\end{prop}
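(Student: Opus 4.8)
The plan is to derive the statement by simply restricting the bijection of Theorem~\ref{theo:hyp_mobiles}. Specialized to a fixed $p$, that theorem already gives a bijection between vertex-pointed $p$-hypermaps and $p$-mobiles under which (with respect to the geodesic labelling on the hypermap side) a dark face of ccw-type $\tau$ corresponds to a black vertex of cw-type $\ubtau$, and unpointed vertices and light faces correspond to one another with the stated label shifts. Since vertex-pointed $p$-constellations form a sub-family of vertex-pointed $p$-hypermaps and $p$-descending mobiles form a sub-family of $p$-mobiles, it suffices to check that the bijection carries the former sub-family onto the latter; the parameter-correspondence is then inherited verbatim.

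To identify the image of the $p$-constellations I would use the observation recalled just before the statement: a $p$-hypermap $H$ endowed with its geodesic labelling is a $p$-constellation if and only if every dark face of $H$ has descending ccw-type. Dually, a $p$-mobile is $p$-descending precisely when every black vertex has descending cw-type. Hence the only thing left to prove is a small lemma on \Lu cyclic sequences: for $\tau$ of length $p$, the sequence $\tau$ is descending if and only if $\ubtau$ is descending (and $\ubtau$ then again has length $p$).

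This lemma is where the only genuine computation lies, and it is short. A descending \Lu cyclic sequence of length $p$ is, up to cyclic rotation, $(a, a-1, \dots, a-p+1)$: its unique rise is the wrap-around pair $(a-p+1, a)$ --- which is automatically a rise by $p-1$, since a length-$p$ \Lu cyclic sequence with a single rise has all remaining steps equal to $-1$, forcing that rise to equal $+(p-1)$. Consequently the upper completion inserts, only at this pair, the block $a-p+2, a-p+3, \dots, a+1$ of exactly $p$ elements, and the upper complement read in reverse order is $(a+1, a, a-1, \dots, a-p+2)$, again a descending \Lu cyclic sequence of length $p$. The converse direction follows from the same computation, using that $\tau \mapsto \ubtau$ is a bijection on \Lu cyclic sequences whose inverse is the lower complement, together with the mirror statement that the lower complement of a descending length-$p$ sequence is descending of length $p$. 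Combining everything: under the bijection of Theorem~\ref{theo:hyp_mobiles} at parameter $p$, a vertex-pointed $p$-hypermap is a $p$-constellation iff all its dark faces have descending ccw-type iff all the corresponding black vertices of the associated $p$-mobile have descending cw-type iff that $p$-mobile is $p$-descending, so restriction yields the claimed bijection with the inherited parameter-correspondence. I do not expect any real obstacle here; the only point needing a little care is the length bookkeeping in the \Lu-sequence lemma.
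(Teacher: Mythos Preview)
Your proof is correct and follows exactly the paper's approach: the paper also obtains the proposition as a direct restriction of Theorem~\ref{theo:hyp_mobiles}, using the characterization that a vertex-pointed $p$-hypermap (with its geodesic labelling) is a $p$-constellation iff every dark face has descending ccw-type. You merely make explicit the little \Lu lemma (that $\tau$ is descending of length $p$ iff $\ubtau$ is) which the paper leaves implicit; your computation of it is right.
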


The bijection is shown  in Figure~\ref{fig:bij_const} for $p=3$ (forgetting the bottom-left drawing for the moment) and
 in Figure~\ref{fig:bij_bipartite} 
for $p=2$ 
(constellations identify to bipartite maps by shrinking each dark face of degree $2$ into an edge and $2$-descending mobiles
identify to suitably labelled plane trees -- with minimum label $1$ -- by erasing the black vertices).

\begin{figure}
\begin{center}
\includegraphics[width=12cm]{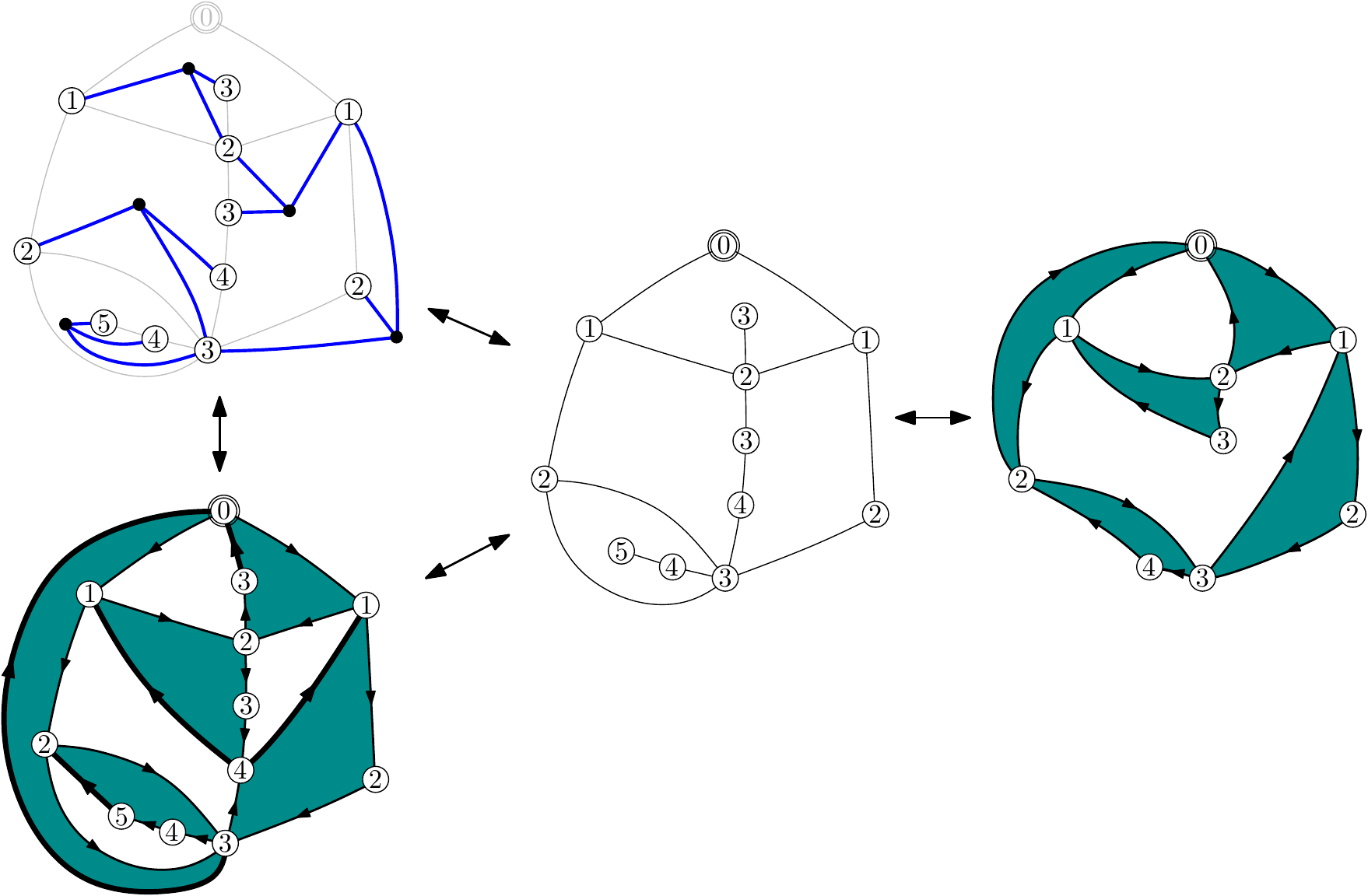}
\end{center}
\caption{Middle: a pointed stretched $6$-angulation. Right: the associated
(by the complementary rules) pointed 3-constellation. Top-left: the associated (by the BDG rules)
mobile. Bottom-left: the $4$-regular constellation obtained by drawing a diagonal
in each (stretched) face from the largest to the smallest vertex. Upon composing these elementary
bijections, we obtain propositions \ref{prop:const1} (connecting top-left to right) and \ref{prop:const2}
(connecting bottom-left to right).
}
\label{fig:bij_const}
\end{figure}

\begin{figure}
\begin{center}
\includegraphics[width=12cm]{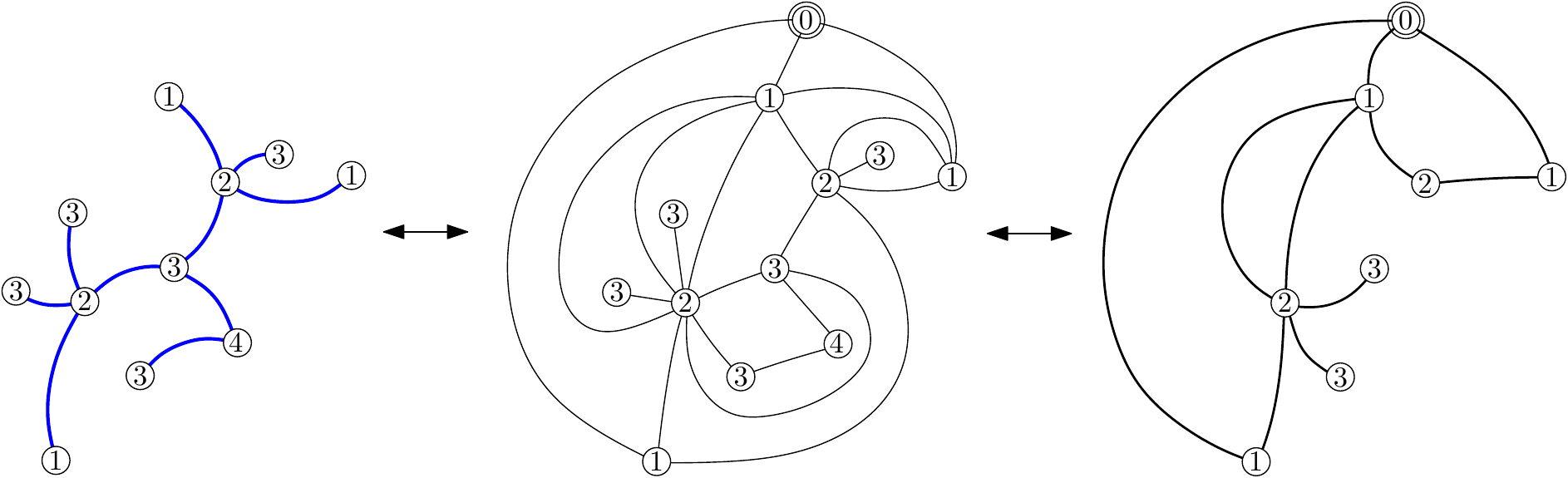}
\end{center}
\caption{In the case $p=2$, a $p$-constellation identifies to a bipartite map $G$ (on the right side),
and a $p$-descending mobile identifies to a suitably labelled plane tree $T$ 
with minimal label $1$ (left side). Each face of $G$ corresponds to a local max of $T$, 
each unpointed vertex of $G$
corresponds to a non local max vertex of $T$ of the same label, and each edge of $G$ of 
type $(i,i-1)$ corresponds to an edge of $T$ of type $(i+1,i)$.  
The bijection is a specialization
of the bijection of Ambj\o rn and Budd using stretched pointed quadrangulations
as intermediate objects (middle).
}
\label{fig:bij_bipartite}
\end{figure}

Let $B$ be a pointed bipartite map endowed with its geodesic labelling.
A face $f$ of $B$, of even degree $2s$, is said to be \emph{stretched}  
if its  type (cw or ccw)  is of the 
form $i,i+1,\ldots,i+s-1,i+s,i+s-1,\ldots,i+1$, i.e.,  there are $s$ rises followed by $s$ descents.
And $B$  is called \emph{stretched} if all its faces are stretched.
Stretched pointed $2p$-angulations are the bijective intermediates 
in Proposition~\ref{prop:const1} (i.e.\ these are the maps in correspondence with $p$-descending mobiles
via $\Phi$ and with vertex-pointed $p$-constellations via $\Phi^{-}$). Moreover, as shown in~\cite{BDG04}, $p$-descending mobiles 
are in bijection with vertex-pointed $(p+1)$-constellations with all light faces of degree $p+1$
(shortly called $(p+1)$-regular constellations -- note that those are nothing but
Eulerian $(p+1)$-angulations endowed with a proper bi-coloring of their faces).  
We provide here a simple shortcut (to jump over $p$-descending mobiles) in  
the bijective chain:

\vspace{.2cm}

pointed $p$-constellations $\leftrightarrow$ stretched pointed $2p$-angulations \\
\phantom{1}\hspace{.1cm} $\leftrightarrow$ $p$-descending mobiles $\leftrightarrow$ pointed $(p+1)$-regular constellations

\vspace{.2cm}

Given a pointed stretched $2p$-angulation $B$, draw in each (stretched) face a diagonal $e$ 
from the largest to the smallest vertex. This splits the face into two faces of degrees $p+1$, 
and we color the one on the right of $e$ as dark and the one on the left of $e$ as light. 
 The obtained figure is clearly a pointed $(p+1)$-regular constellation $E$. In addition,
if $B$ is endowed with its geodesic labelling, then the induced labelling on $E$
is exactly the geodesic labelling of $E$. The inverse mapping is easy.
Given a $(p+1)$-regular constellation $E$, endow $E$ with its geodesic labelling, 
which has the property that the labels in clockwise (resp. counterclockwise) 
order around each dark (resp. light) face are of the form $i,i+1,\ldots,i+p$. 
Erasing the edges of the form $i,i+p$, we naturally obtain a stretched 
(vertex-pointed) bipartite $2p$-angulation
endowed with its geodesic labelling. To summarize, we obtain:

\begin{prop}\label{prop:const2}
There is a bijection between vertex-pointed $p$-constellations and vertex-pointed $(p+1)$-regular
constellations with the following properties.  For $C$ a pointed $p$-constellation and $E$ the corresponding pointed $(p+1)$-regular constellation (both endowed with their geodesic labelling), each  face $f$ of $C$ corresponds
to a right local max vertex of $E$ of label $\maxi(f)+1$, and each vertex $v$ of $C$ corresponds to a 
non right local max vertex $v'$ of $E$ of the same label.
\end{prop}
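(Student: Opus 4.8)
The plan is to build the claimed bijection as a two-step composition, short-circuiting the passage through $p$-descending mobiles visible in the displayed chain. The first step is the correspondence $\Phi^{-}$ between pointed $p$-constellations $C$ and stretched pointed $2p$-angulations $B$: this is Corollary~\ref{coro:bij_anti_mobile} (equivalently its specialisation Proposition~\ref{prop:bij_bip_hyp}) restricted to these subfamilies, so its bijectivity, its compatibility with geodesic labellings, and its parameter dictionary are already at hand. The second step is the ``diagonal'' map $\Theta$ sending $B$ to $E:=\Theta(B)$: in each stretched face one inserts the chord $e$ from its unique largest vertex to its unique smallest vertex, colouring the side of $e$ lying to the right dark and the other light. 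The map we want is $\Theta\circ(\Phi^{-})^{-1}$, with inverse $\Phi^{-}\circ\Theta^{-1}$, where $\Theta^{-1}$ deletes, in a $(p+1)$-regular constellation carried with its geodesic labelling, every edge whose endpoints have labels differing by $p$.

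First I would settle everything about $\Theta$. Slicing a stretched $2p$-gon $i,i{+}1,\dots,i{+}p,i{+}p{-}1,\dots,i{+}1$ along its peak-to-bottom chord produces two $(p+1)$-gons, one carrying the $p$ rises and one the $p$ descents, so all faces of $E$ have degree $p+1$. The two faces of $B$ meeting along a given edge traverse it in opposite senses, so each edge of $B$ lies on the rising slope of exactly one of them; hence ``right of the chord $=$ dark'' produces a proper face-bicoloring. For Eulerianity: around a fixed vertex $v$, the number of faces of $B$ having $v$ as a peak or as a bottom equals $\deg_B(v)$ minus the number of sign changes in the cyclic ``up/down'' word of the edges at $v$, so it has the same parity as $\deg_B(v)$, and therefore $\deg_E(v)=\deg_B(v)+\#\{\text{peak or bottom corners at }v\}$ is even. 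Reading labels around a dark (resp.\ light) face of $E$ then yields a run $i,i{+}1,\dots,i{+}p$ in clockwise (resp.\ counterclockwise) order, so $E$ is a genuine $(p+1)$-regular constellation; conversely, in any such constellation endowed with its geodesic labelling the labels around each face are of this form, so the edges joining labels $i$ and $i{+}p$ are precisely the chords introduced by $\Theta$, and deleting them inverts $\Theta$. Finally, $\Theta$ transports geodesic labellings: by Claim~\ref{claim:geodH} it suffices to see that, in the canonical orientation of $E$ (each chord directed from its label-$(i{+}p)$ end to its label-$i$ end, the rest read off from the dark faces), the marked vertex keeps label $0$ and is the unique right local min; and indeed a vertex of $E$ is the minimum of an incident dark face exactly when it is the bottom of the corresponding face of $B$, so the right local minima of $E$ are the local minima of $B$, after which comparing directed distances in $E$ with graph distances in $B$ finishes the point.

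It then remains to push the parameter dictionary of $\Phi^{-}$ through $\Theta$. Because $\Theta$ fixes the vertex set of $B$ and all its labels, the only translation needed is between the local situation at a vertex of $B$ and the corresponding notion for $E$ in its canonical orientation: ``local max of $B$'' becoming ``right local max of $E$'', non-local-maxima matching non-right-local-maxima, and the two marked vertices matching. Feeding in the known correspondence between vertices and faces of $C$ and vertices of $B$, together with the label shift $\maxi(f)\mapsto\maxi(f)+1$ attached to faces, then gives Proposition~\ref{prop:const2}; as a sanity check the resulting bijection should coincide with the composite of Proposition~\ref{prop:const1} and the bijection of~\cite{BDG04} between $p$-descending mobiles and pointed $(p+1)$-regular constellations.

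The step I expect to be genuinely delicate is this last one: one must be scrupulous about the various ``clockwise/counterclockwise'' and ``left/right'' conventions in force when passing from $B$ to $E$, and about the $\pm1$ shifts, since the meaning of ``largest/smallest vertex of a face'', of ``local max'' versus ``right local max'', and of the label attached to a face all change along the chain. Everything else — the parity argument for Eulerianity, the propriety of the bicoloring, and the recognition of the chords as the ``long'' edges — is short and should cause no trouble.
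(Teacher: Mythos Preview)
Your approach is essentially the paper's own: the authors construct the bijection precisely as the composition of $\Phi^{-}$ (restricted to pointed $p$-constellations and stretched pointed $2p$-angulations) with the ``diagonal'' map that slits each stretched face along the chord from its peak to its bottom, with exactly the same inverse (delete the edges joining labels $i$ and $i+p$ in the geodesic labelling of $E$). You are, if anything, more explicit than the paper about the intermediate checks (Eulerianity, proper bicoloring, preservation of geodesic labellings); the paper dispatches these with ``clearly'' and ``is easy'', and likewise does not spell out the verification that local maxima of $B$ become right local maxima of $E$ --- the point you rightly flag as the delicate one.
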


\begin{figure}
\begin{center}
\includegraphics[width=9cm]{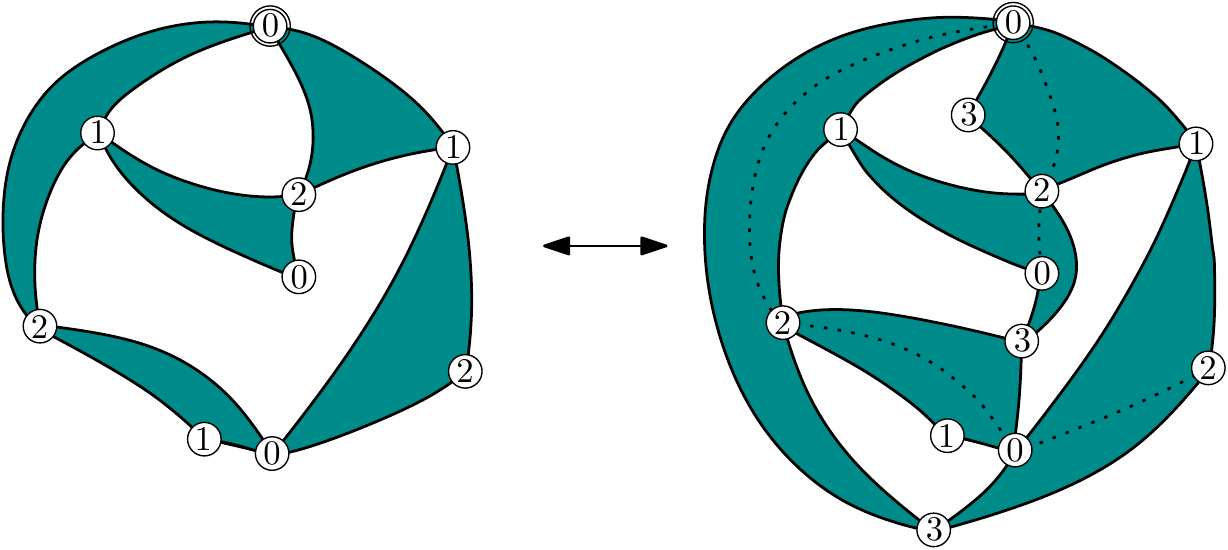}
\end{center}
\caption{The usual bijection between pointed $p$-constellations and pointed $(p+1)$-regular
constellations. From left to right: add a vertex of color $p$ in each light face and
extend each dark face by making the edge $\{p-1,0\}$ pass by the vertex in the light face to its left.
From right to left: draw a diagonal in each dark face between the vertices of color $0$ and $p-1$,
then erase the vertices of color $p$ and their incident edges.}
\label{fig:bij_const_classique}
\end{figure}

There is already a classical bijection between pointed $p$-constellations and pointed $(p+1)$-regular 
constellations, using the characterization in terms of vertex-coloring, see Figure~\ref{fig:bij_const_classique}.  
However, similarly as in Proposition~\ref{prop:bij_bip_hyp}, the bijection of Proposition~\ref{prop:const2} 
has the advantage that it preserves the distance to the pointed vertex. 

\begin{rmk} \label{rmk:2dmhyp} Using the trivial identification of
  $2$-constellations with bipartite maps, we may combine
  Proposition~\ref{prop:const1} and
  Proposition~\ref{prop:bij_bip_hyp}, to obtain a bijection between
  $2$-descending mobiles and vertex-pointed hypermaps. In this
  bijection, black vertices and right local max of a mobile
  correspond respectively to edges and dark faces of the associated hypermap.
\end{rmk}

\subsection{Proof that the mappings $\Phi/\Psi$ give a bijection}\label{sec:proof_bij}
\subsubsection{Proof of Claim~\ref{claim:Smap}.} \label{sec:proof_claim_Smap}
Let $S$ be the figure obtained from $B\in\cB$ by applying the BDG rules
(and then deleting the local min vertices 
and the edges of $B$). We show here that $S$ is a map, i.e., is connected. 
Let $s_v$, $s_e$, $s_f$ be the numbers of vertices, edges, and faces of $S$ (a face is a connected
component of the sphere from which  $S$ is cut out). Let $m$ be the number
of local min of $B$. By the BDG rules, $s_e$ is the number of edges of $B$, and $s_v+m$ is the 
number of vertices plus the number of faces of $B$. Hence, by the Euler relation applied to $B$, 
$s_v+m=s_e+2$. Moreover, the Euler relation applied to $S$ ensures that $s_v+s_f=s_e+1+k$, where $k$
is the number of connected components of $S$. Hence, $S$ is connected iff $s_f\leq m$ (indeed, $S$
is connected iff $k\leq 1$). To prove that $s_f\leq m$, it is enough to show that there is
at least one local min of $B$ inside each face of $S$. Let $C$ be the so-called completed map of $B$,
obtained from $B$ by adding a black vertex $v_f$ inside each face $f$ of $B$ and connecting
$v_f$ to all corners around $f$. 
Note that $C$ is a triangulation (each triangle is incident to two white vertices
and a black one) and that $S$ and $B$ 
are embedded subgraphs of $C$. 
To show that there is a local min of $B$ inside each face
of $S$, it is enough to show that from each vertex inside a (triangular) face $\tau$ of $C$
one can reach a local min vertex of $B$ without meeting $S$. 
In turn this reduces  
 to show the following claim, where the \emph{index} of a (triangular) face of $C$
is the smallest label over the (two) labelled vertices incident to $\tau$:

\begin{claim}\label{claim:accessible}
For any triangular face $\tau$ of $C$ such that $\tau$ is not incident to a local min of $B$, and
for any point $p$ in $\tau$, it is possible to move continuously, and without
meeting $S$, from $p$ to a point $p'$ that is in a triangular face $\tau'$ of index strictly smaller
than the index of $\tau$. 
\end{claim}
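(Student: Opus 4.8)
The plan is to analyze the local picture around the triangular faces of $C$ and to show that, from a non-local-min triangle $\tau$, one can always ``escape downwards'' through an edge of $C$ that is not an edge of $S$. Recall that $C$ is a triangulation in which every triangle has exactly two white (labelled) vertices $u,v$ with $|\ell(u)-\ell(v)|=1$ and one black vertex $v_f$; the edges of $C$ are of three types: (i) the original edges of $B$ (which connect the two white vertices of a triangle and get erased in $S$), (ii) the ``BDG edges'' from a black vertex $v_f$ to a white vertex $u$, which are kept in $S$ precisely when, traversing $f$ clockwise, the corresponding edge of $B$ is descending at $u$, and (iii) the remaining black–white edges of $C$, which are \emph{not} edges of $S$. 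So the edges of $S$ are exactly the BDG edges, and to move ``without meeting $S$'' means to cross only edges of type (i) or type (iii), i.e.\ edges of $B$ or non-BDG black–white edges.

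First I would fix a triangular face $\tau$ of $C$, with white vertices $u,v$ (say $\ell(u)=\ell(v)-1$) and black vertex $v_f$ sitting inside a face $f$ of $B$, so that the index of $\tau$ is $\ell(u)$. I would then examine the three edges bounding $\tau$: the edge $\{u,v\}$ of $B$, and the two black–white edges $\{v_f,u\}$ and $\{v_f,v\}$. Since the index is $\ell(u)$, I want to exit $\tau$ through an edge leading to a triangle of index $<\ell(u)$, and I claim at least one of $\{u,v\}$, $\{v_f,u\}$ is usable. The key observation is the BDG rule applied \emph{around the vertex} $u$: reading the corners of $B$ around $u$, the neighbor of $u$ on the clockwise side of the corner at $f$, call it $w$, satisfies either $\ell(w)=\ell(u)-1$ (a descending edge at $u$ in that face when traversed clockwise, in which case the corresponding BDG edge lands on $u$ and the black–white edge on the \emph{other} side of the relevant triangle is a non-BDG edge of $C$, through which we can slide into the neighbouring face of $B$ which contains a vertex of label $\ell(u)-1$), or $\ell(w)=\ell(u)+1$, in which case the edge $\{v_f,u\}$ is itself a non-BDG edge of $C$ and crossing it brings us to the adjacent triangle, which is incident to $u$ and to a white vertex of label $\ell(u)-1$ unless $u$ is a local min; iterating around $u$, since $u$ is not a local min it has some neighbour of label $\ell(u)-1$, and one reaches a triangle of strictly smaller index without ever crossing a BDG edge. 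The hypothesis that $\tau$ is not incident to a local min guarantees we never get stuck at a vertex all of whose triangles have index $\geq\ell(u)$.

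The main obstacle, I expect, is the bookkeeping of orientations: one must be careful about the clockwise-versus-counterclockwise conventions in the BDG rules and about which of the two triangles sharing a black–white edge of $C$ is ``the'' BDG triangle, since the same geometric edge $\{v_f,u\}$ borders two triangles of $C$ and is an edge of $S$ iff it is inserted as a BDG edge when traversing $f$ clockwise. I would therefore set up once and for all, around each white vertex $u$ that is not a local min, the cyclic sequence of its incident corners/faces and the labels of the consecutive neighbours, identify in this sequence the positions where BDG edges are attached to $u$ (these are the descents read clockwise inside each face), and check directly that between two such BDG-attachments there is always a gap containing either an edge of $B$ going down from $u$ or a non-BDG black–white edge leading to a triangle strictly lower in index. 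Assembling these local moves into a continuous path that avoids $S$ is then immediate, since the complement of $S$ in the sphere is an open set and crossing a non-$S$ edge of $C$ is a legitimate move within that open set; a finite induction on the index (which is bounded below on any finite map) terminates the argument and, run to the end, produces the local min inside each face of $S$ that is needed to conclude $s_f\le m$ and hence the connectivity of $S$.
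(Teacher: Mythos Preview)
Your strategy---walk around the lower-labelled vertex $u$ of $\tau$, crossing only non-$S$ edges of $C$, until you reach a triangle incident to a neighbour of $u$ of label $\ell(u)-1$---is exactly the paper's approach. The paper executes it in one stroke, without a case split: going clockwise around $u$ starting from $v$, let $v=v_0,v_1,\ldots,v_{k-1}$ be the successive white neighbours (all of label $i+1$) until the first one $w=v_k$ of label $i-1$. For each intermediate face $f_j$ (between $v_j$ and $v_{j+1}$ around $u$), the clockwise-in-$f_j$ edge leaving $u$ goes to $v_j$ of label $i+1$, hence is ascending, so the black--white edge $\{u,b_j\}$ is not a BDG edge; and the white--white edges $\{u,v_j\}$ are never in $S$. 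One therefore slides all the way to the triangle $(u,b_{k-1},w)$, of index $i-1$.

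Your two-case analysis, by contrast, runs into exactly the orientation trouble you anticipated. In your Case~1 (the clockwise-in-$f$ successor $w$ of $u$ has label $i-1$), the BDG edge $\{v_f,u\}$ is indeed in $S$, but so is $\{v_f,v\}$, since $v\to u$ is also descending clockwise in $f$. Thus \emph{both} black--white sides of $\tau$ are blocked, and the escape you propose ``through the black--white edge on the other side'' is not available; nor does the neighbouring face of $B$ across $\{u,v\}$ have any reason to contain a vertex of label $\ell(u)-1$. The only crossable side of $\tau$ is $\{u,v\}$, after which one lands in the adjacent face and must iterate---which is precisely your Case~2. So Case~1 is not a separate exit route but merely the first step of the clockwise walk; once the orientation is fixed consistently, the argument collapses to the single paragraph above.
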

\noindent\emph{Proof of the claim.} The situation is shown in Figure~\ref{fig:local_prop}.
\begin{figure}
\begin{center}
\includegraphics[width=2.6cm]{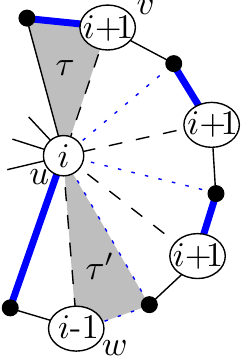}
\end{center}
\caption{Situation around $u$ in the proof of Claim~\ref{claim:accessible}; 
edges between two white vertices (never in $S$) are shown dashed,
edges from a black to a white vertex are shown dotted if not in $S$, bold if in $S$, solid if 
the status is not determined (by the BDG rules) from the labels in the figure.}
\label{fig:local_prop}
\end{figure}
Let $u$ be the vertex of smallest label incident to $\tau$, call $i$ the index of $u$, 
and let $v$ be the other labelled
vertex of $\tau$, of label $i+1$. 
Since $u$ is not a local min, there is a neighbor $w$ of $u$ such that, in clockwise order between $v$ and $w$, 
all labelled neighbors of $u$ have label $i+1$, and the 
vertex $w$ has label $i-1$. 
Hence, by the BDG rules, the edges from $u$ to any black vertex between $v$ and $w$ 
are not in $S$. Hence, denoting by $\tau'$ the face of $C$   having the edge $\{u,w\}$
(traversed from $u$ to $w$) to its left,  
  one can go from any point inside $\tau$ to any point inside $\tau'$ without meeting $S$.
This concludes the proof of the claim, since $\tau'$ has smaller index than $\tau$. \hfill $\square$

It follows from the proof that there is actually (since $s_f\leq m$ implies $s_f=m$ by the Euler relation)
\emph{exactly} one local min ---denoted $v_f$--- of $B$ inside each face $f$ of $S$. Moreover 
recall the BDG rule: ``for each edge $e=\{b,w\}\in S$, with $b$ the black 
extremity and $w$ the white (labelled) extremity, the next edge after $e$ in counterclockwise
order around $w$ leads to a white vertex of label $\ell(w)-1$". This implies 
that each white vertex $w$ on the contour of $f$ is incident to an edge of $B$ starting inside $f$ and leading
to a white vertex $w'$ of label $\ell(w)-1$. 
Necessarily $w'$ is inside $f$ (at $v_f$) or is on the contour of $f$. 
Hence, taking $w$ to have label $\mini(f)$, the neighbor $w'$ of smaller label is 
necessarily at $v_f$, which ensures that $\ell(v_f)=\mini(f)-1$.

\subsubsection{Proof of Claim~\ref{claim:Bmap} and that $\Phi\circ\Psi=\mathrm{Id}$.} 

Let $H\in \cH$, let $S$ be the star representation of $H$, and superimpose $S$ with $B:=\Psi(H)$. 
The situation around a black vertex of $S$ is shown in Figure~\ref{fig:psi_face}: each black vertex of $S$ yields
a face of $B$, and these faces 
cover the entire surface (sphere), so we conclude that $B$ is a map (a map can also be defined as a gluing
of topological disks to form a closed surface). In addition,
it is clear also from the figure that the edges of $S$ will exactly be those selected by the BDG rules.
Hence $\Phi(B)=H$. 

\begin{figure}
\begin{center}
\includegraphics[width=13cm]{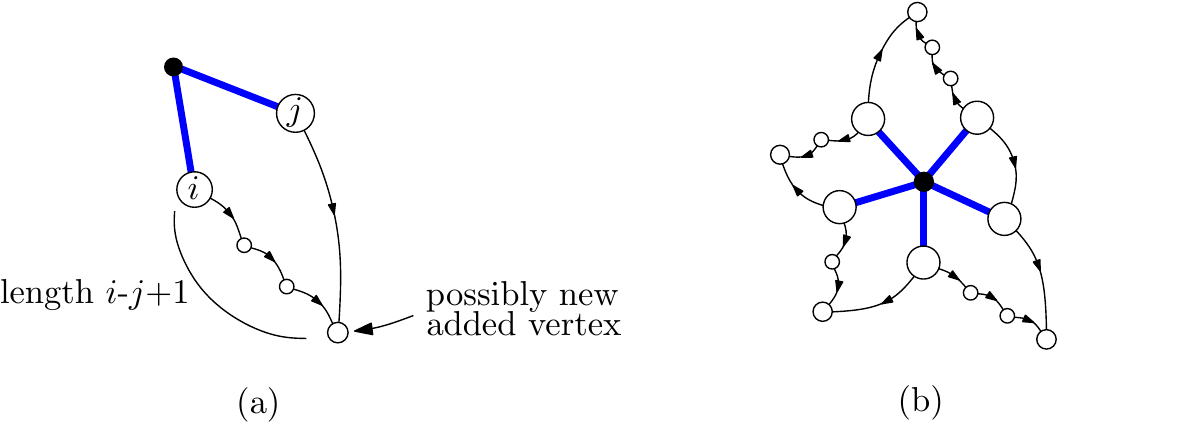}
\end{center}
\caption{(a) Situation (after applying $\Psi$) for a corner at a black vertex of $S$ (edges of $B$ are oriented in label decreasing direction). 
(b) Situation (after applying $\Psi$) around a black vertex of $S$: each black vertex of $S$ yields a face of $B$.}
\label{fig:psi_face}
\end{figure}

\subsubsection{Proof that $\Psi\circ\Phi=\mathrm{Id}$.} 
Let $B\in\cB$, $H=\Phi(B)$, and $S$ the star-representation of $H$. 
In the following it is convenient to see $B$ as superimposed with $S$.  
An easy observation (following from the BDG rules) 
is that, if we direct the edges of $B$ in label-decreasing way, then 
each corner $c$ of $S$ at a white vertex $v$ ---since $B$ and $S$ are superimposed, 
there is a bunch of edges
of $B$ in $c$--- contains a unique edge of $B$ going out of $v$, 
which is the clockwise-most in $c$ (note that this local property
is to be satisfied after applying $\Psi$), see Figure~\ref{fig:proof_e}(a).  
Denote by $e_c$ this edge. 
 We have seen in Section~\ref{sec:proof_claim_Smap} 
that $H\in\cH$, there is exactly one local min of $B$
strictly inside each face $f$ of $S$, and this local min has label $\mini(f)-1$. 
So the first step of $\Psi$ (adding a vertex inside each face $f$ of label $\mini(f)-1$) is the inverse
of the last step of $\Phi$ (deleting all the local min); and in addition for each corner $c$ of $S$ at a 
white vertex $v$, if $e_c$ goes to the local min in the face incident to $c$, then $e_c$
will be created by $\Psi$. It remains to show that for each corner $c$ such that $e_c$ does
not go to a local min (otherly stated, for each edge $e$ of $B$ not incident to a local min), 
 $e_c$ will be created by $\Psi$. 
Let $f$ be any face of $S$, with $v_f$ the local min of $B$ inside $f$, and let $e$ be an edge of $B$ 
 inside $f$ and 
not incident to $v_f$. 
Adding $e$ to $f$ splits $f$ into two faces 
 $L_f(e), R_f(e)$ respectively on the left and on the right of $e$ directed in label decreasing way. Let $u$
be the extremity of $e$ of largest label and $v$ the extremity with smaller label, say $\ell(u)=i$ and $\ell(v)=i-1$. 
To show that $e$ will be created when applying $\Psi$ (to $S$), it remains to establish   
 the following property:

\begin{claim}\label{claim:e}
The local min $v_f$ is inside $L_e(f)$, $v$ has a unique incident corner
 in $R_e(f)$ (the one delimited by $e$ on the right side) 
and any other corner 
in $R_e(f)$ at a white labelled vertex $w$ satisfies $\ell(w)\geq i$.  
\end{claim}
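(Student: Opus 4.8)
\textbf{Proof plan for Claim~\ref{claim:e}.}
The strategy is to work entirely inside the single face $f$ of $S$,
using the BDG rules that created the edges of $B$ there,
and to trace the boundary walk of the two sub-faces $L_e(f)$ and $R_e(f)$
that arise when $e$ is added. Recall the key local property established at the end of
Section~\ref{sec:proof_claim_Smap}: for each edge $e_c=\{b,w\}$ of $S$ incident to the contour of $f$
(here $b$ black, $w$ white of some label $\ell(w)$), the edge of $B$ following $e_c$ in
counterclockwise order around $w$ leads to a white vertex of label $\ell(w)-1$; equivalently,
each corner $c$ of $S$ at a white vertex $v$ contains exactly one edge $e_c$ of $B$ outgoing from $v$
(in the label-decreasing orientation), and $e_c$ is the clockwise-most edge of $B$ in $c$.
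So inside $f$ the edges of $B$ form, when oriented in label-decreasing direction, a structure where
every white vertex on the contour of $f$ has out-degree one; together with the local min $v_f$ of
label $\mini(f)-1$ sitting inside $f$, this means the edges of $B$ inside $f$ form a forest of
trees all directed towards $v_f$ (a vertex of label $j>\mini(f)$ is pushed, edge after edge, down to a
vertex of label $j-1$, and eventually reaches $v_f$).

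First I would prove that $v_f$ lies in $L_e(f)$. Since $u$ (label $i$) has an outgoing edge of $B$
reaching a white vertex of label $i-1$, and since $v$ (label $i-1$) also has such an outgoing edge
(unless $v=v_f$, in which case $v$ has none but is itself the sink): following the out-edges from
either endpoint of $e$ traces a directed path in $B$ that decreases strictly in label and therefore
terminates at $v_f$. The edge $e$ itself is the clockwise-most edge of $B$ in the corner of $S$ at $u$
into which it is drawn, hence the out-edge from $u$ distinct from $e$, as well as the whole descending
path from $u$ avoiding $e$, stays on the \emph{left} side of $e$; I would make this precise by checking
that ``clockwise-most'' forces all other edges of $B$ in that corner (and their continuations) into
$L_e(f)$. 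Thus $v_f\in L_e(f)$.

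Next, the corner count at $v$. The vertex $v$ has label $i-1$, and $e$ enters $v$ as one of the edges
of $B$ in some corner $c$ of $S$ at $v$; by the local property, the edge $e$ --- being the
clockwise-most in $c$ and oriented \emph{into} $v$ --- occupies the ``last'' slot of that corner, so
going counterclockwise around $v$ from $e$ one immediately enters the part of the surface on the
\emph{left} of $e$. Consequently $v$ has exactly one corner in $R_e(f)$, namely the wedge between $e$
and the next edge of $S$ clockwise around $v$. For the label condition on the other white corners of
$R_e(f)$: I would argue by the minimality of the descending-path structure. Any white vertex $w$ with
a corner in $R_e(f)$ is reached, along the boundary walk of $R_e(f)$, from $u$ by following edges of
$S$ and of $B$; using the BDG rule (the edge of $B$ in a corner leads to label $\ell(w)-1$) together
with the well-labelledness of $H$ ($\ell \geq \ell(\text{source})-1$ across $S$-edges when the dark
face is on the right), one shows the label cannot drop below $i$ inside $R_e(f)$, because the only way
to reach label $i-1$ from $u$ inside $f$ is via the edge $e$, which is the boundary between the two
sides. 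The point of label $i-1$ that $e$ leads to, namely $v$, is exactly the one contributing the
single special corner; everything strictly interior to $R_e(f)$ has label $\geq i$.

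The main obstacle I anticipate is the second half of the last paragraph: controlling the labels inside
$R_e(f)$ requires a careful ``no shortcut'' argument showing that within the face $f$ no directed path
of $B$ reaches label $i-1$ except by traversing $e$. This is where one must genuinely use that $e$ is
the \emph{clockwise-most} edge of $B$ in its corner (so it is the first descent ``available'' on the
right side) and that the descending forest inside $f$ is well-behaved; a picture like
Figure~\ref{fig:proof_e} should make the combinatorics transparent, but writing it cleanly in words
is the delicate step. Once Claim~\ref{claim:e} is in hand, it follows that when $\Psi$ processes the
face $f$ of $S$, the unique corner of $v$ in $R_e(f)$ is a corner of label $i-1 = \mini(R_e(f))$...
one must be slightly careful: $\mini$ is taken over the light face of $H$, i.e.\ over $f$, not over
$R_e(f)$, so I would instead observe that the leg inserted by $\Psi$ at the relevant corner of $u$
(label $i > \mini(f)$) is connected to the next corner of label $i-1$ counterclockwise around $f$,
and Claim~\ref{claim:e} guarantees that this next corner is precisely the one of $v$ delimited by $e$;
hence $\Psi$ recreates $e$. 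Summing over all edges $e$ of $B$ inside $f$ (and over all faces $f$),
together with the already-established matching of local min to light faces, yields $\Psi\circ\Phi=\mathrm{Id}$.
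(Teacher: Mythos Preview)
Your plan has the right local ingredient (the BDG rule / clockwise-most property) but misapplies it in two places, and the resulting ``follow the descending path'' argument does not close.

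First, at $u$: the edge $e$ \emph{is} the unique outgoing edge of $B$ in its $S$-corner at $u$ (it is precisely the $e_c$ of that corner). So there is no ``out-edge from $u$ distinct from $e$'' to follow. The other $B$-edges in that $S$-corner at $u$ are incoming (from vertices of label $i+1$), and following them backwards gives ascending, not descending, paths.

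Second, at $v$: you write that $e$, ``being the clockwise-most in $c$ and oriented into $v$, occupies the last slot''. But the clockwise-most edge in the $S$-corner at $v$ is the \emph{outgoing} edge $e'$ from $v$, not the incoming edge $e$; the property asserts nothing directly about where incoming edges sit. One can indeed check that $e'$ lies on the $L_e(f)$ side of $e$, but this requires an orientation argument you have not supplied, and your sentence about which side one enters going counterclockwise from $e$ is in fact backwards.

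Third, and more structurally: even if you start the descending path correctly from $v$ and show that its first step $e'$ lies in $L_e(f)$, you must argue that every subsequent step stays in $L_e(f)$. This can be made to work (each subsequent vertex has label $<i-1$, hence is neither $u$ nor $v$, so its $S$-corner is not split by $e$), but it is exactly the case analysis your proposal defers to ``a picture should make it transparent''. Your argument for the label bound in $R_e(f)$ is similarly circular: ``the only way to reach label $i-1$ from $u$ inside $f$ is via $e$'' is essentially a restatement of what is to be proved.

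The paper sidesteps all of this with a short extremal argument. For $v_f\in L_e(f)$: pick a white vertex $w_\ell$ of minimum label on the contour of $L_e(f)$; since $v$ lies on that contour, $\ell(w_\ell)\le i-1<i$, so $w_\ell\neq u$, and the BDG rule then produces a neighbour of $w_\ell$ of label $\ell(w_\ell)-1$ inside $L_e(f)$, which by minimality can only be $v_f$. For the $R_e(f)$ assertion: pick any white corner $c_0\neq c$ in $R_e(f)$ of minimum label $\ell(w_0)$; the BDG rule produces a neighbour of label $\ell(w_0)-1$ on the contour of $R_e(f)$, whose corner must by minimality be $c$, so that neighbour is $v$ and $\ell(w_0)=i$. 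No path-following, no orientation bookkeeping.
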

\noindent\emph{Proof of the claim.} 
Recall the BDG rule (illustrated in Figure~\ref{fig:proof_e}): 
for each edge $e=\{b,w\}\in S$, with $b$ the black 
extremity and $w$ the white (labelled) extremity, the next edge after $w$ in counterclockwise
order around $w$ leads to a white vertex of label $\ell(w)-1$. 
Let $w_{\ell}$ be a white vertex of smallest possible label on the contour of $L_e(f)$, note that $w_{\ell}\neq u$.
Since $w_{\ell}\neq u$, the BDG rule recalled above implies that $w_{\ell}$ has a neighbor of label $\ell(w_{\ell})-1$
in $L_e(f)$, either at $v_f$ or on the contour of $L_e(f)$. The second case is excluded by minimality of $w_{\ell}$,
so we conclude that $v_f$ is inside $L_e(f)$, see Figure~\ref{fig:proof_e}(b). 
The statement about $R_e(f)$ is proved similarly. 
Denote by $c$ the corner of $R_e(f)$ that is incident to $v$ and delimited by $e$ on the right side. 
Call a corner of $R_e(f)$ admissible
if it is different from $c$ and incident to a white vertex.  
Choose an admissible corner $c_0$ 
in $R_e(f)$ of smallest possible label (label of the incident white vertex). Let $w_0$ be the white vertex incident to $c_0$.  
Again, since $c_0\neq c$ the BDG rule implies that $w_0$ has a neighbor of label $\ell(w_0)-1$ 
on the contour of $R_e(f)$.
This neighbor is necessarily $v$ (otherwise it would yield an admissible corner 
of smaller label than $c_0$, contradicting
the minimality of $c_0$), hence $\ell(w_0)=i$.  \hfill $\square$

\begin{figure}
\begin{center}
\includegraphics[width=10cm]{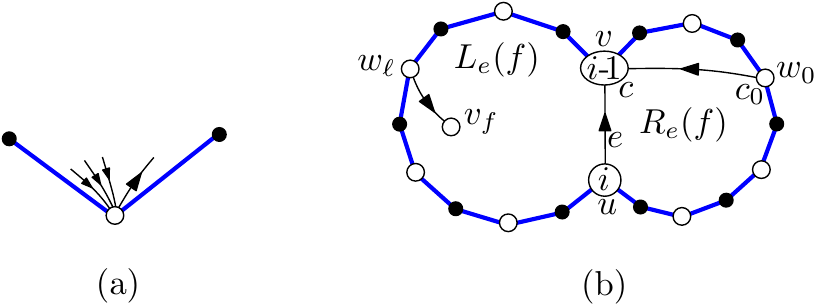}
\end{center}
\caption{(a) After applying $\Phi$ to $B$, 
each corner $c$ of $S$ contains a unique edge of $B$ going out, which 
is the clockwise-most edge of $B$ inside $c$. 
(b) Situation in the proof of Claim~\ref{claim:e}. 
The shown edges of $B$ are
 directed in the label-decreasing way.} 
\label{fig:proof_e}
\end{figure}

\subsection{Extension to higher genus}
\label{sec:highergenus}
Up to now the bijective results have been stated and proved with maps and hypermaps 
on the sphere, but everything 
can be defined, stated and proved in the same way as in genus $g\geq 0$ (the sphere
corresponds to $g=0$, recall that a map of genus $g$ is a connected graph $G$ embedded
on the genus $g$ surface $\Sigma$ such that any component of $\Sigma\backslash G$ is 
a topological disk). We point out here the few places where one can not copy verbatim. 

In the proofs, the only place 
involving $g$ as a parameter
is in the use of the Euler relation in the proof of Claim~\ref{claim:Smap}. In genus $g$, 
 the Euler relation applied to $B$ gives 
$s_v+m=s_e+2-2g$, and the Euler relation applied to $S$ ensures that $s_v+s_f\geq s_e+2-2g$, 
with equality iff $S$ is a map of genus $g$. Hence $S$ is a map of genus $g$ iff $s_f\leq m$, as in the genus $0$
case.

In the definitions, note that a mobile (i.e.,  the star-representation of a  well-labelled 
hypermap with a unique light face and minimal label $1$) is not a tree in higher genus
 but a unicellular map of genus $g$.   
And the notion of constellation  in genus $g$ has to be defined in terms of 
the color-property (in higher genus, the condition of light face-degrees being multiple of $p$ is
strictly weaker than the condition with colors), that is,  
for $p\geq 2$, a \emph{$p$-constellation} in genus $g$ is a $p$-hypermap of genus $g$ 
whose vertices can be colored, with colors in $0,1,\ldots,p-1$, such that the colors of vertices in 
clockwise order around any dark face are $0,1,\ldots,p-1$. 
The extension of Proposition~\ref{prop:point_bip_mobiles}
(bijection between pointed bipartite maps and mobiles) to higher genus was 
first given in \cite{Chapuy2009CPC}.

\section{Two-point functions depending on a single size parameter}
\label{sec:2psing}

In this section we shall use specializations of the above bijections
to compute distance-dependent two-point functions for a number of
families of maps or hypermaps.  More precisely, we shall concentrate
on $2$-hypermaps, i.e.\ hypermaps having all their dark faces of
degree $2$, and \emph{$3$-hypermaps} with all their dark faces of
degree $3$.  Recall that \emph{general maps}, whose two-point function
was already obtained in \cite{AmBudd}, are trivially identified
with $2$-hypermaps by blowing each edge of the map into a dark face of
degree $2$. We shall also consider the case of $2$-constellations,
trivially identified with \emph{bipartite maps}, and that of
\emph{$3$-constellations}.  All these maps will be counted according
to their natural size parameter, namely the number of edges (for maps)
or dark faces (for hypermaps).

The (distance-dependent) \emph{two-point function} of a class of maps
is, informally speaking, the generating function of such maps with two
marked points at a prescribed distance. More precisely, we consider
planar (hyper)maps that are both vertex-pointed and rooted, i.e.\ with
a marked oriented edge (the \emph{root edge}). In the case of
hypermaps, we furthermore assume that the root edge is oriented in
such a way that its incident dark face (the \emph{root face}) lies on
its right. The two-point function is defined as the generating
function of these vertex-pointed and rooted (hyper)maps with
prescribed geodesic distances from the pointed vertex to all vertices
incident to the root edge (for maps) or to the root face (for
hypermaps). Since we consider only a single size parameter, the
two-point function depends on a single formal variable $t$, the weight
per edge or dark face.

The reader shall be warned that, in order to avoid introducing too
many symbols, indexes or subscripts, we will keep the same notations
for the two-point function and related quantities regardless of the
class of maps considered.

\subsection{The two-point function of general maps}
\label{sec:2pgen}

Consider a pointed rooted map (i.e.\ a map with a pointed vertex and a marked oriented edge): we say that its 
root edge is of type $(k,j)$ if its origin and endpoint are at respective distances $k$ and $j$ from the pointed vertex.  
Here $k$ and $j$ are non-negative integers satisfying $|j-k|\leq 1$. Following the notations of \cite{BG12}, we denote by 
$R_i\equiv R_i(t)$, $i\geq 1$, the generating function of pointed rooted maps \emph{enumerated with a weight} 
$t$ \emph{per edge} whose root edge is of type $(j-1,j)$ for $j\leq i$.  By reversing the orientation of the root edge, pointed 
rooted maps whose root  edge is of type $(j+1,j)$ for $j\leq i$ are enumerated by $R_{i+1}$. We finally denote by 
$S_i^2\equiv S_i(t)^2$,  $i\geq 0$, the generating function of pointed rooted maps whose root edge 
is of type $(j,j)$ for $j\leq i$ (we write this generating function as a square to stick to the notations of \cite{BG12}).

\subsubsection{Computation from the bijective approach}
Using the trivial identification of maps with $2$-hypermaps, $R_i$ may alternatively be understood as the generating 
function for vertex-pointed and rooted $2$-hypermaps, with a weight $t$ per dark face, and a root face of ccw-type 
$\tau=(j,j-1)$ for $j\leq i$ if we endow the hypermap with its geodesic labelling. From Theorem~\ref{theo:hyp_mobiles}, those 
are in one-to-one correspondence with $2$-mobiles, i.e.\ mobiles having all their black vertices of degree $2$,  
with a marked black vertex (in correspondence with the root face) of cw-type $\ubtau=(j,j+1)$, $j\leq i$.

Similarly, $S_i^2$ may be viewed as the generating function for vertex-pointed and rooted $2$-hypermaps, with a weight $t$ per dark face, 
and a root face of ccw-type $\tau=(j,j)$ for $j\leq i$ (under the geodesic labelling), one side of the root face being distinguished
(in correspondence with the root edge).
From Theorem~\ref{theo:hyp_mobiles}, those are now in in one-to-one correspondence with {$2$-mobiles} with a marked black 
vertex of cw-type $\ubtau=(j+1,j+1)$, $j\leq i$, one of the incident half edges being distinguished.

Recall that mobiles are required to have a minimal label $1$. The summation over all $j\leq i$ allows
however to waive this constraint. Indeed, shifting the labels by $i-j\geq 0$ in a mobile with a marked
black vertex of cw-type  $\ubtau=(j,j+1)$ (resp.  $\ubtau=(j+1,j+1)$) produces a new labelled tree with marked  
black vertex of cw-type  $\ubtau=(i,i+1)$ (resp.  $\ubtau=(i+1,i+1)$) whose minimal label $1+i-j$ is now, 
after summation over $j$, an arbitrary integer between $1$ and $i$ (respectively $i+1$). We shall call
\emph{floating} mobiles these new objects with an arbitrary positive minimal label (the rules for labels around
a black vertex remain unchanged).

Denoting by $T_i=T_i(t)$ the generating function of floating $2$-mobiles planted at a white vertex labelled $i$, enumerated
with a weight $t$ per black vertex, we deduce immediately the identifications
\begin{equation} \label{eq:genrsti}
\begin{split}
&R_i=1+t \,T_i T_{i+1}&\qquad i\geq 1,\\
&S_i =\sqrt{t}\, T_{i+1}&\qquad i\geq 0,
\end{split}
\end{equation}
where we incorporated in $R_i$ a conventional term $1$.
Now we have at our disposal explicit expressions for the generating function $T_i$, as obtained for instance 
by solving the equation
\begin{equation} \label{eq:gentrec}
T_i=\frac{1}{1-t\,(T_{i-1}+T_i+T_{i+1})}, \qquad i\geq 1
\end{equation}
with initial condition $T_0=0$ (this equation simply expresses the recursive nature of planted floating $2$-mobiles,
namely that a floating $2$-mobile planted at a white vertex labelled $i$ may be viewed as a sequence of
planted floating $2$-mobiles -- attached to the root vertex via bivalent black vertices -- having themselves a root label $i-1$, $i$ or $i+1$). 
From various techniques, it was found that \cite{GEOD}
\begin{equation}
\begin{split}
&T_i=T \frac{(1-y^i)(1-y^{i+3})}{(1-y^{i+1})(1-y^{i+2})} \\
&\text{where}\ \ T=1+3 t\,T^2\ \ \text{and}\ \ y+\frac{1}{y}+1=\frac{1}{t\, T^2}\ .
\label{eq:2pgenTi}
\end{split}
\end{equation}
Here, and for similar equations below, we always pick for $T$ the solution satisfying $T=1+O(t)$ and for 
$y$ the solution with modulus less than $1$.  
Plugging this formula in the expressions for $R_i$ and $S_i$ above, we deduce, after simplification, that
\begin{equation}
\begin{split}
&R_i=R \frac{(1-y^{i+1})(1-y^{i+3})}{(1-y^{i+2})^2}\\
&\text{where}\ \ R=1+ t\,T^2
\end{split}
\label{Rigm}
\end{equation}
and
\begin{equation}
\begin{split}
&S_i=S \frac{(1-y^{i+1})(1-y^{i+4})}{(1-y^{i+2})(1-y^{i+3})}=S-\sqrt{R\, y}\left(\frac{1-y^{i+2}}{1-y^{i+3}}-\frac{1-y^{i+1}}{1-y^{i+2}}\right)\\
&\text{where}\ \ S=\sqrt{t}\, T\ .
\end{split}
\label{Sigm}
\end{equation}
Note that $R=\lim_{i\to \infty}R_i$ and $S^2=\lim_{i\to \infty}S_i^2$ may be understood as the generating functions 
for pointed rooted maps whose root edge is of type $(j-1,j)$ and $(j,j)$ respectively \emph{without bound on $j$}.
In particular, the generating function for pointed rooted planar maps is $2(R-1)+S^2=3 t\,T^2=T-1$ which
(from the Schaeffer bijection for instance) is known to be half the generating function for pointed rooted 
quadrangulations. This result could have been deduced directly from the ``trivial'' bijection between general maps 
and quadrangulations.

Let us finally compute the generating function $V_i$ for vertex-pointed maps, enumerated with a weight $t$ per edge,
 with an extra marked vertex at distance $j\leq i$ from the pointed vertex, with $i\geq 1$. Recall that such doubly-pointed maps
(supposedly drawn on the sphere)  may present a $k$-fold symmetry by rotation around their two marked vertices (supposedly 
drawn at antipodal positions). As customary, we decide to enumerate maps with this $k$-fold symmetry with a 
\emph{symmetry factor} $1/k$. Only with this definition has $V_i$ a simple expression. As before,
$V_i$  may alternatively be understood as the generating function (with symmetry factors) for vertex-pointed  $2$-hypermaps wih 
an extra marked vertex labelled $j\leq i$ under the geodesic labelling. From Theorem~\ref{theo:hyp_mobiles}, these are in
one-to-one correspondence with $2$-mobiles with a marked non-right-local-max vertex of the same label $j$. 
Shifting the labels by $(i-j)$ and summing over all $j\leq i$, $V_i$ is the generating function for floating $2$-mobiles
with a marked non-right-local-max vertex labelled $i$.   
This allows to write
\begin{equation}
\begin{split}
&V_i=\sum_{k\geq 1}\frac{\left(t\,(T_{i-1}+T_{i}+T_{i+1})\right)^k}{k}-\sum_{k\geq 1}\frac{\left(t\,(T_{i-1}+T_{i})\right)^k}{k}\\
&\ \ =\log\left(\frac{1-t\, (T_{i-1}+T_{i})}{1-t\,(T_{i-1}+T_{i}+T_{i+1})} \right)\\
&\ \ =\log\left(T_i(1-t\, (T_{i-1}+T_{i})\right)\\
&\ \ =\log\left(1+t\, T_{i}T_{i+1} \right)\\
&\ \ =\log(R_i)\ ,
\end{split}
\end{equation} 
valid for $i\geq 1$ (the subtracted term in the first line was introduced to remove configurations were
the root vertex would be a right-local max).  
 
\subsubsection{Comparison with the continued fraction approach}
\label{sec:2pgencf}
It is interesting to compare the explicit expressions \eqref{Rigm} and \eqref{Sigm} to those obtained 
from the continued fraction approach developed in \cite{BG12} for maps with a control of their face degrees. 
Note that enumerating maps with a weight $t$ per edge is equivalent to enumerating maps 
\emph{with unbounded face degrees} and a weight $g_k=t^{k/2}$ per face of degree $k$.  The continued fraction approach allows to write
\begin{equation} \label{eq:2pgencontfracform}
\begin{split}
& R_i=R \frac{u_i u_{i+2}}{u_{i+1}^2}\\
& S_i=S-\sqrt{R} \left(\frac{\tilde{u}_{i+2}}{u_{i+2}}-\frac{\tilde{u}_{i+1}}{u_{i+1}}\right)
\end{split}
\end{equation}
where $R$ and $S^2$ have the same interpretation as above as pointed rooted map generating functions
and where $u_i$ and $\tilde{u}_i$ may be expressed in terms of $(i+1)\times (i+1)$ Hankel determinants $H_i$ 
and $\tilde{H}_i$ ($u_i=H_{i-2}/R^{(i-1)(i-2)/2}$ and $(i-1)S u_i-\sqrt{R} \tilde{u}_i=\tilde{H}_{i-2}/R^{(i-1)(i-2)/2}$
with the notations of \cite{BG12}).  
$R$ and $S$ are determined by the system \cite[Eq.~(1.6)]{BG12}
\begin{equation} \label{eq:2pgenRSeq}
\begin{split}
&S=\sum_{k=1}^\infty t^{k/2} P(k-1,R,S)\\
&R=1+\frac{1}{2} \sum_{k=1}^\infty t^{k/2} P(k,R,S)-\frac{S^2}{2}
\end{split}
\end{equation}
where $P(k,R,S)$ denotes the generating function of three-step paths,
i.e.\ lattice paths in the discrete Cartesian plane consisting of up-steps $(1, 1)$, level-steps $(1, 0)$ 
and down-steps $(1, -1)$, starting at $(0, 0)$ and ending at $(k, 0)$, with a weight $S$ attached to each
level-step and a weight $\sqrt{R}$ attached to each up- or down-step.
The summation over three-step paths yields immediately
 \begin{equation}
S=\sqrt{t}\left(1-2 \sqrt{t}S+t(S^2-4R)\right)^{-1/2}\\
R=1+\frac{1}{2}\left(\frac{S}{\sqrt{t}}-1\right)-\frac{S^2}{2}
\end{equation}
from which we deduce the explicit values
 \begin{equation}
S=\frac{1-\sqrt{1-12 t}}{6\sqrt{t}}\\
R=\frac{1+12t-\sqrt{1-12t}}{18t}.
\end{equation}
It is readily seen that these values coincide with the above expressions $S=\sqrt{t}\,T$ and $R=1+t\, T^2$
for a function $T$ given by
\begin{equation}
T=\frac{1-\sqrt{1-12 t}}{6t}
\end{equation}
which is precisely the solution of $T=1+3t\, T^2$. 
This corroborates, as it should, our results for $R$ and $S$. 

As for $u_i$ and $\tilde{u}_i$, it is known that, in the case of maps with bounded face degrees, the related Hankel determinants 
may be expressed as symplectic Schur functions whose variables are the solutions $x$ of a \emph{characteristic equation}
\cite[Eq.~(1.10)]{BG12} (up to a $x \to 1/x$ symmetry,  this equation admits as many solutions as the maximal allowed face degree minus $2$). As such, $u_i$ and $\tilde{u}_i$ may be expressed in terms of 
determinants of a fixed size, independent of $i$. For unbounded face degrees however, we have no such simplification 
\emph{a priori} and in the present case, we have not been able to derive simple expressions for $u_i$ and $\tilde{u}_i$ directly 
from their expressions via Hankel determinants. Still it is instructive to write the characteristic equation
\begin{equation}
1=\sum_{k=2}^\infty t^{k/2}\sum_{q=0}^{k-2}P(k-2-q,R,S)\left(\sqrt{R}x+S+\frac{\sqrt{R}}{x}\right)^q\ .
\end{equation}
By exchanging the sums, it may be rewritten as 
\begin{equation}
1=\frac{t \left(1-2 \sqrt{t}S+t(S^2-4R)\right)^{-1/2}}{1-\sqrt{t}\left(\sqrt{R}x+S+\frac{\sqrt{R}}{x}\right)}
\end{equation}
which, upon setting $S=\sqrt{t}T$ and $R=1+t\, T^2$, and using $T=1+3t\, T^2$, simplifies
into 
\begin{equation}
 x^2+\frac{1}{x^2}+1=\frac{1}{t\, T^2}\ .
\end{equation}
Note that, up to obvious symmetries $x\to -x$ and $x\to 1/x$, this equation determines 
a unique solution. Moreover, comparing with our bijective results, we are led to the identification $x^2=y$, while
eqs.~\eqref{Rigm} and \eqref{Sigm} show that $u_i= c \lambda^i(1-x^{2i+2})$ for some (undetermined) $c$ and $\lambda$
and $\tilde{u}_i=c  \lambda^i x (1-x^{2i})$ . It is remarkable that $u_i$ and $\tilde{u}_i$ admit such a simple form:
this property still awaits a proper explanation in the continued fraction approach.

\subsubsection{Applications}

As a simple application of the above formulas, we may compute the average number of edges of type $(i-1,i)$, $(i,i)$ or
$(i+1,i)$ in an infinitely large vertex-pointed map, i.e.\ a vertex-pointed map with $n$ edges in the limit $n\to \infty$.
(Note that, in all rigor, our computation incorporates a symmetry factor $1/k$ to those vertex-pointed maps having
a $k$-fold symmetry. These symmetric maps are however negligible in the large $n$ limit.)
The large $n$ asymptotics is easily captured by the singularity of the above generating functions when $t\to 1/12$. 
We have singularities of the form
\begin{equation}
R_i|_\text{sing.} \sim (1-12t)^{3/2}\delta_i \qquad 
S_i^2|_\text{sing.} \sim (1-12t)^{3/2}\eta_i  
\end{equation}
with values of $\delta_i$ and $\eta_i$ easily computed from the exact expressions above for $R_i$ and $S_i$.
By a standard argument, the average numbers of edges of type $(i-1,i)$,  $(i,i)$ and 
$(i+1,i)$ are found respectively to be
\begin{equation}
\begin{split}
&\hskip -1.cm e_{i-1,i}=\frac{3}{2} (\delta_i-\delta_{i-1})=\frac{i (i+3) (2 i+3) \left(5 i^4+30 i^3+67 i^2+66 i+28\right)}{35 (i+1)^2 (i+2)^2} \\
&\hskip -1.cm e_{i,i}=\frac{3}{2} (\eta_i-\eta_{i-1})\\
&\hskip -.5cm=\frac{2 \left(5 i^8+80 i^7+537 i^6+1964 i^5+4251 i^4+5528 i^3+4175 
i^2+1660 i+280\right)}{35 (i+1)^2 (i+2) (i+3)^2} \\
&\hskip -1.cm e_{i+1,i}= e_{i,i+1}=\frac{3}{2} (\delta_{i+1}-\delta_{i})\\
&\hskip -.2cm=\frac{(i+1) (i+4) (2 i+5) \left(5 i^4+50 i^3+187 i^2+310 i+196\right)}
{35 (i+2)^2 (i+3)^2}\\
\end{split}
\end{equation} 
for $i\geq 0$. We have in particular an average number $e_{0,1}=28/9$ (resp. $e_{0,0}=8/9$) of half edges incident to the 
pointed vertex whose complementary half edge is incident to a distinct (resp. the same) vertex.  These two numbers add up to $4$,
as expected since a large map has asymptotically $4$ times more half edges than vertices.
From the singularity 
\begin{equation}
\log(R_i)|_\text{sing.} \sim (1-12t)^{3/2}\theta_i 
\end{equation}
we easily deduce the average number of vertices at distance $i$ from the pointed vertex in 
infinitely large vertex-pointed maps:
\begin{equation}
v_{i}=\frac{3}{2} (\theta_i-\theta_{i-1})=\frac{3}{280} (2 i+3) \left(10 i^2+30 i+9\right)
\end{equation} 
for $i\geq 1$.

\subsection{The two-point function of bipartite maps}
\label{sec:2pbip}
We may now easily play the same game with general \emph{bipartite} maps, which, upon blowing their
edges into dark faces of degree $2$, are nothing but general $2$-constellations. 
\subsubsection{Computation from the bijective approach}
Considering a pointed
rooted bipartite map, its root edge is now necessarily of type $(j-1,j)$ or $(j+1,j)$ for some $j$. 
We use the same notation $R_i\equiv R_i(t)$ to now denote the generating function of pointed rooted 
bipartite maps enumerated with a weight $t$ per edge, whose root edge is of type $(j-1,j)$ for $j\leq i$.
This is also the generating function for vertex-pointed and rooted  $2$-constellations, with a weight $t$ per dark face, 
and a root face of ccw-type $\tau=(j,j-1)$ for $j\leq i$ if we endow the constellation with its geodesic labelling. 
From Proposition~\ref{prop:const1}, the later are in one-to-one correspondence with 
$2$-descending mobiles with a marked black vertex of cw-type $\ubtau=(j,j+1)$, $j\leq i$.

Denoting $T_i=T_i(t)$ the generating function of floating (i.e.\ with arbitrary positive minimal label) $2$-descending 
mobiles planted at a white vertex labelled $i$, enumerated with a weight $t$ per black vertex, 
we deduce immediately the same identification as before
\begin{equation} \label{eq:2pbiprti}
R_i=1+t \,T_i T_{i+1}\qquad i\geq 1\ .
\end{equation}
The generating function $T_i$ is now obtained by solving the equation
\begin{equation}
T_i=\frac{1}{1-t\, (T_{i-1}+T_{i+1})},\qquad i\geq 1
\end{equation}
with initial condition $T_0=0$, and one finds \cite{ONEWALL}
\begin{equation} \label{eq:biptisol}
\begin{split}
&T_i=T \frac{(1-y^i)(1-y^{i+4})}{(1-y^{i+1})(1-y^{i+3})} \\
&\text{where}\ \ T=1+2 t\,T^2\ \ \text{and}\ \ y+\frac{1}{y}=\frac{1}{t\, T^2}\ .
\end{split}
\end{equation}
This leads, after simplification, to
\begin{equation}
\begin{split}
&R_i=R \frac{(1-y^{i+1})(1-y^{i+4})}{(1-y^{i+2})(1-y^{i+3})}\\
&\text{where}\ \ R=1+ t\,T^2\ .
\end{split}
\end{equation}

We may also compute along the same lines as before the generating function $V_i$ for vertex-pointed bipartite maps, 
enumerated with a weight $t$ per edge, with an extra marked vertex at distance $j\leq i$ from the pointed vertex, with $i\geq 1$. 
Following the same chain of arguments as above, we have
\begin{equation}
\begin{split}
&V_i=\sum_{k\geq 1}\frac{\left(t\,(T_{i-1}+T_{i+1})\right)^k}{k}-\sum_{k\geq 1}\frac{\left(t\, T_{i-1}\right)^k}{k}\\
&\ \ =\log\left(\frac{1-t\, T_{i-1}}{1-t\,(T_{i-1}+T_{i+1})} \right)\\
&\ \ =\log\left(T_i(1-t\, T_{i-1}\right)\\
&\ \ =\log\left(1+t\, T_{i}T_{i+1} \right)\\
&\ \ =\log(R_i)\ ,
\end{split}
\end{equation} 
valid for $i\geq 1$.  Note that the relation between $V_i$ and $R_i$ is unchanged when going from general to bipartite maps
and one can argue that it holds for all classes of maps and hypermaps described here, as a consequence of the general BDG bijection.

\subsubsection{Comparison with the continued fraction approach}
Again, part of this result may be re-derived from the approach of \cite{BG12}
by noting that enumerating bipartite maps with weight $t$ per edge amounts to 
enumerating bipartite maps with unbounded even face degrees and with a weight 
$t^k$ per $2k$-valent face. The generating function $R$ is obtained in this
framework via
\begin{equation}
R=1+\sum_{k\geq 1}t^k {2k-1\choose k-1} R^k=\frac{1+\sqrt{1-4 t R}}{2\sqrt{1-4 t R}},
\end{equation}
namely
\begin{equation}
R=\frac{1-\sqrt{1-8t}+4t}{8t}\ .
\end{equation}
This expression is compatible with $R=1+t\,T^2$ for
\begin{equation}
T=\frac{1-\sqrt{1-8t}}{4t}
\end{equation}
which is the solution of $T=1+2 t\,T^2$, as wanted.
For bipartite maps, the generating function $R_i$ is now expected to take the form \cite{GEOD}
\begin{equation} \label{eq:2pbipcontfracform}
R_i=R \frac{u_i u_{i+3}}{u_{i+1}u_{i+2}}\\
\end{equation}
and the characteristic equation reads
\begin{equation}
1=\sum_{k=1}^\infty t^k R^{k-1}\sum_{q=0}^{k-1}{2k-2-2q\choose k-1-q}\left(x+\frac{1}{x}\right)^{2q}\ =\frac{t (1-4 t R)^{-1/2}}{
1-t R \left(x+\frac{1}{x}\right)^2}.
\end{equation}
Setting $R=1+t\, T^2$ and using $T=1+2t\, T^2$, this simplifies
into 
\begin{equation}
 x^2+\frac{1}{x^2}=\frac{1}{t\, T^2}
\end{equation}
which again allows us to identify $x^2=y$ and deduce the simple form 
$u_i=c \lambda^i (1-x^{2i+2})$ for some $c$ and $\lambda$. Getting this 
expression for $u_i$ via the continued fraction approach is still an open question. 

\subsubsection{Applications}
Again, we may compute the average number of edges of type $(i-1,i)$ or
$(i+1,i)$ in an infinitely large vertex-pointed bipartite map, as well as the average number of vertices at distance $i$.
We find
\begin{equation}
\begin{split}
&\hskip -1.cm e_{i-1,i}=\frac{2 i (i+4) \left(10 i^4+80 i^3+233 i^2+292 i+141\right)}{105 (i+1) (i+2) (i+3)} \\
&\hskip -1.cm e_{i+1,i}= e_{i,i+1}=\frac{2 (i+1) (i+5) \left(10 i^4+120 i^3+533 i^2+1038 i+756\right)}{105 (i+2) (i+3) (i+4)}\\
\end{split}
\end{equation} 
for $i\geq 0$ and
\begin{equation}
v_{i}=\frac{4}{315} (i+2) \left(10 i^2+40 i+13\right)
\end{equation} 
for $i\geq 1$. We have in particular an average number $e_{0,1}=3$ of half edges incident to the 
pointed vertex, as expected since a large bipartite map has asymptotically $3$ times more half edges than vertices
(this is easily seen from the ``trivial'' bijection between bipartite maps and Eulerian triangulations).

\subsubsection{The two-point functions of general hypermaps}\label{sec:genhyper}

The recourse to $2$-descending mobiles used above for the computation of the two-point function of bipartite maps
turns out to be also helpful to compute the two-point function of general hypermaps.
Indeed, upon using Remark~\ref{rmk:2dmhyp}, it can be shown that the generating function 
$\mathcal{R}_i=\mathcal{R}_i(t)$ of vertex-pointed general
hypermaps with a marked edge of type $(j-1,j)$ for $j\leq i$, enumerated with a weight $t$
per edge of the hypermap, is identical to that of triples of consecutive white labelled
vertices of labels $(j,j+1,j+2)$ with $j\leq i$ in $2$-descending mobiles, with a weight $t$ per black vertex. 
A proof of this statement is given just below. The two-point function $\mathcal{R}_i$ therefore reads
\begin{equation}
\mathcal{R}_i= 1+ t^2 T_iT_{i+1}T_{i+2}\ ,
\label{eq:calR}
\end{equation} 
$i\geq 1$, with $T_i$ as in \eqref{eq:biptisol} (again we incorporate in $\mathcal{R}_i$ a conventional term $1$).  Using
the explicit form of $T_i$, we immediately deduce the factorized form
\begin{equation}
\begin{split}
&\mathcal{R}_i=\mathcal{R} \frac{(1-y^{i+2})(1-y^{i+4})}{(1-y^{i+3})^2}\\
&\text{where}\ \ \mathcal{R}=1+ t^2\,T^3\ .
\end{split}
\end{equation}
Equation \eqref{eq:calR} is a consequence of the following Claim:

\begin{claim}\label{claim:hypertomob}
In the bijection of Remark~\ref{rmk:2dmhyp} between vertex-pointed hypermaps (endowed with their 
geodesic labelling) and 2-descending mobiles, each edge $(i-1,i)$
of the hypermap corresponds to a triple of consecutive white labelled
vertices of labels $(i,i+1,i+2)$ in counterclockwise order around
the mobile. And the vertex of label $i$ of the edge identifies
to the vertex of label $i$ of the triple.
\end{claim}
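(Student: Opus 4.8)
The plan is to unfold the composition defining the bijection of Remark~\ref{rmk:2dmhyp} and track a single edge of the hypermap through each stage. Recall that this bijection is $\Phi\circ(\Phi^-)^{-1}$ in disguise: starting from a vertex-pointed hypermap $H$ (with its geodesic labelling), Proposition~\ref{prop:bij_bip_hyp} produces a vertex-pointed bipartite map $B$ (the $2$-constellation picture, so dark faces of $H$ become edges of degree $2$), and then Proposition~\ref{prop:const1} (i.e.\ $\Phi$ applied to the stretched intermediate, specialised to $p=2$) produces the $2$-descending mobile $M$ whose black vertices are bivalent and carry cw-type $\ubtau$. So the first step is to say precisely what happens to a single edge $e=\{v,u\}$ of $H$ with $\ell(v)=i-1$, $\ell(u)=i$, oriented from $v$ to $u$ so that its dark face $f_e$ (degree $2$) lies on its right.

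Next I would apply the parameter-correspondence of Proposition~\ref{prop:bij_bip_hyp} to $f_e$: a dark face of ccw-type $\tau=(i-1,i)$ (reading the two vertices of $f_e$ in ccw order, starting so that the list is a \Lu cyclic sequence) corresponds to a face of $B$ of ccw-type $\ubtau$. Here $\tau=(i-1,i)$ is a \Lu cyclic sequence of length $2$; its upper completion inserts the rising sequence $i,i+1$ between the consecutive pair $(i-1,i)$ (since $i\geq i-1$) and also inserts $i$ between the pair $(i,i-1)$ going the other way around --- wait, more carefully, one must chase the definition of $\uctau$ on a length-$2$ \Lu cyclic sequence and read off $\ubtau=\uctau\backslash\tau$ in reverse order; the outcome is that $\ubtau$ is a $4$-cycle of labels $(i,i+1,i+2,i+1)$ up to rotation, i.e.\ the corresponding face of $B$ is a \emph{stretched} quadrilateral with contour labels $i,i+1,i+2,i+1$ (the $s=2$ stretched shape: two rises then two descents), whose two vertices of minimal label $i$ on the contour are exactly the images of $v$ and $u$. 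Both $v,u$ have label $i$ in $B$ because the bijection preserves labels on non-local-max vertices.

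Then I would push this stretched quadrilateral of $B$ through $\Phi$ to $M$, using Proposition~\ref{prop:const1}/Theorem~\ref{theo:hyp_mobiles}: a face of $B$ of cw-type $\lctau'$ corresponds to a black vertex of $M$ of cw-type $\tau'$, but here it is cleaner to use the stretched-$2p$-angulation picture directly. The stretched quadrilateral with contour $i,i+1,i+2,i+1$ (cw) has a unique diagonal from its largest vertex (label $i+2$) to a smallest vertex; under the BDG rules it gives rise to a single bivalent black vertex of $M$ whose two white neighbours carry labels $i+1$ and $i+1$ --- hmm, that is the black vertex of cw-type $(i+1,i+1)$, not the triple we want. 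The triple of consecutive white vertices $(i,i+1,i+2)$ that the claim refers to must instead be read as a \emph{path in the mobile}: the vertex of label $i$ (image of $v$), one of its black neighbours, the white vertex of label $i+1$ on that black vertex, then that white vertex's relevant black neighbour, then the white vertex of label $i+2$. Concretely, in a floating $2$-mobile, a white vertex labelled $i$ is joined via a bivalent black vertex to a white vertex of label $i-1$, $i$, or $i+1$; the subtree responsible for $T_i$ at label $i$ contains, by iterating, a canonical chain $i \to i+1 \to i+2$ (each step through one bivalent black vertex), and the claim is that the edge $e$ of $H$ is encoded precisely by marking such a chain, with the anchor white vertex of label $i$ being the image of the endpoint $v$ of $e$.

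So the real content, and the step I expect to be the main obstacle, is the \emph{local} verification: one must look at the small neighbourhood of $e$ in $H$, carry it through the two elementary bijections, and check that (a) the image of the endpoint $v$ is a white vertex $w_i$ of label $i$ in $M$, (b) $w_i$ is adjacent (through a bivalent black vertex) in $M$ to a white vertex $w_{i+1}$ of label $i+1$, counterclockwise-consecutive at that black vertex, and that this adjacency and the next one to a white vertex $w_{i+2}$ of label $i+2$ are forced by the presence of the edge $e$ and conversely force it. This is essentially a picture-chase through Figures~\ref{fig:rules}, \ref{fig:bijpointed}, \ref{fig:bij_bipartite}: each dark face of degree $2$ in $H$, sitting between vertices of labels $i-1$ and $i$, contributes in $M$ exactly one bivalent black vertex attached to a $w_{i+1}$, and the geodesic-labelling constraint in $H$ (shortest directed path) translates under Claim~\ref{claim:geodB}/\ref{claim:geodH} into the statement that the chain $w_i,w_{i+1},w_{i+2}$ is the canonical ascending chain. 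The enumeration identity \eqref{eq:calR} then follows: marking an edge of type $(i-1,i)$ with $j\le i$ means marking, in the floating $2$-descending mobile, a white vertex of label $i$ (generating function $T_i$) together with the distinguished bivalent black vertices leading up through $T_{i+1}$ and $T_{i+2}$, each carrying a weight $t$, giving the term $t^2 T_i T_{i+1} T_{i+2}$; the summation over $j\le i$ is absorbed by the shift to floating mobiles exactly as in Section~\ref{sec:2pgen}.
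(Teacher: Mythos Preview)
Your argument rests on a misidentification at the very first step. You write that the edge $e=\{v,u\}$ of $H$ has ``its dark face $f_e$ (degree $2$)'', and then compute the ccw-type $\tau=(i-1,i)$ of $f_e$ and its upper complement. But $H$ here is a \emph{general} hypermap, not a $2$-hypermap: the dark face incident to $e$ can have arbitrary degree, and $e$ is merely one of its sides. The passage through $2$-descending mobiles comes from the fact that the intermediate object is a bipartite map (a $2$-constellation), \emph{not} from any degree constraint on $H$. Under Proposition~\ref{prop:bij_bip_hyp}, dark faces of $H$ correspond to faces of $B$ (of twice the degree, via $\uctau$), not to edges of $B$; so your sentence ``dark faces of $H$ become edges of degree $2$'' is simply wrong, and everything built on the supposed stretched quadrilateral with contour $i,i+1,i+2,i+1$ collapses. (A symptom: you also assert ``both $v,u$ have label $i$ in $B$'', but labels are preserved, so $v$ keeps label $i-1$.)

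The paper's route avoids this entirely by never passing through a single dark face of $H$. From the local rules of $\Phi^-$ one sees directly that an edge $(i-1,i)$ of $H$ corresponds to a \emph{clockwise-consecutive triple} $(i-1,i,i+1)$ of vertices around some face of the bipartite map $B$ --- this is a statement about three consecutive corners, not about an entire face contour. One then tracks this triple through the stretched-quadrangulation intermediate $Q$, where it becomes a vertex of label $i$ together with a specific pattern of clockwise-consecutive neighbours with labels $(i+1,i+1,i-1)$; a short case analysis on the multiplicity of the relevant ``$(i-1)$-sector'' (Figure~\ref{fig:cases_corresp}) then shows that under the BDG rules this pattern lands exactly on a ccw-consecutive triple $(i,i+1,i+2)$ of white corners around the mobile. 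Your second half (the ``chain $w_i,w_{i+1},w_{i+2}$'' discussion) is gesturing toward this conclusion, but without the correct starting datum in $B$ there is no way to carry it out, and the text as written does not establish the required bijection between edges of $H$ and such triples.
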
 
\begin{proof}
Recall that the bijection can be seen as the composition of $3$ correspondences:
(1) between vertex-pointed hypermaps and vertex-pointed bipartite maps (Prop.~\ref{prop:bij_bip_hyp}), (2) between
vertex-pointed bipartite maps (viewed as $2$-constellations) and vertex-pointed stretched quadrangulations,
(3) between vertex-pointed stretched quadrangulations and 2-descending
mobiles (the composition of (2) and (3) corresponding to Prop.~\ref{prop:const1}). 
Hence we have to study how an edge $(i-1,i)$ of the hypermap
is transported through each of the 3 steps. Given the local rules
of $\Phi^-$ (see Figure~\ref{fig:rules} right part), it is clear that 
each edge $(i-1,i)$ of a vertex-pointed hypermap corresponds 
to a triple of consecutive vertices of labels $(i-1,i,i+1)$
in clockwise order around a face of the associated vertex-pointed bipartite map. To look at the parameter-correspondence in steps (2) and (3)
we find it simpler to take the point of view of stretched 
quadrangulations. Let $Q$ be a vertex-pointed stretched quadrangulation,
$B$ the corresponding vertex-pointed bipartite map. Each non local max
vertex $v$ of label $i$ in $Q$ identifies to a vertex of label $i$ in $B$. 
By the local rules of $\Phi^-$, a corner $c$ of $Q$ at $v$ yields an edge $e$ 
of $B$
incident to $v$ iff the neighbour (in $Q$) of $v$  
on the left side of $c$ has label $i+1$,
and in that case
the other extremity of $e$ has same label as the neighbour (in $Q$) of $v$ 
 on the right side of $c$. It easily follows that each clockwise-consecutive triple $(i-1,i,i+1)$ in $B$ corresponds in $Q$ to a vertex $v$ of label $i$ together with a triple 
of (clockwise) consecutive neighbours of $v$ of labels $(i+1,i+1,i-1)$. 
Regarding $Q$, 
define an $(i-1)$-sector of $v$ as a corner at $v$ upon removing
the edges $(i,i+1)$ around $v$; note that clockwise-consecutive triples of neighbours of labels $(i+1,i+1,i-1)$ around $v$ are in 1-to-1 correspondence with $(i-1)$-sectors
(around $v$) of multiplicity strictly larger than $1$ (the multiplicity being the number of removed $(i,i+1)$ edges). 
Now we can discuss step (3) of the bijection. Let $T$ be the 2-descending mobile associated to $Q$. 
By the local rules of $\Phi$, each $(i-1)$-sector $s$ at $v$ yields
exactly one edge of $T$ (in the first corner in clockwise order around the sector). In addition, as shown in Figure~\ref{fig:cases_corresp}, 
this edge leads to a white vertex $w$ of label $i-1$ (resp. $i+1$) if $s$ has multiplicity $0$ (resp.\ $>0$),
and in the second case the next vertex after $w$ (in counterclockwise order around the mobile) has label $i$ (resp. $i+2$) if $s$ has multiplicity $1$ (resp.\ $>1$). This concludes the proof.     
\end{proof}

\begin{figure}
\begin{center}
\includegraphics[width=12cm]{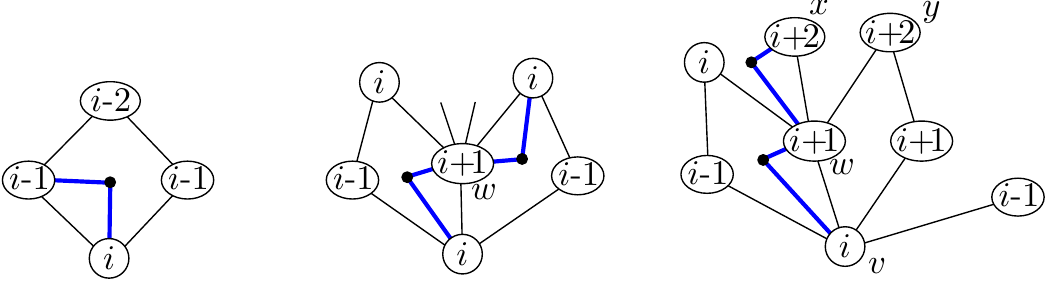}
\end{center}
\caption{The 3 cases for an $(i-1)$-sector at a vertex of label $i$ in $Q$: multiplicity $0$, $1$ and $\geq 2$, respectively. The second case can take the degenerated form where the faces on each side of the edge $(i,i+1)$ are the same, in which
case $w$ is a leaf of the 2-descending mobile. 
In the third case, the successor of $w$ of label $i+2$ around the mobile
 is generically defined
as the unique neighbour $x$ of $w$ such that the neighbour preceding $x$ (in clockwise order around $w$) has label $i$ and all neighbours of $w$ between $x$
and $y$ have label $i+2$.}
\label{fig:cases_corresp}
\end{figure}
 
\subsection{The two-point function of $3$-hypermaps}
\label{sec:2p3h}
Consider now a vertex-pointed and rooted $3$-hypermap (recall that the root edge is
oriented so as to have the dark incident face -- the root face -- on its right). We wish to enumerate 
such $3$-hypermaps endowed with their geodesic labelling and with their root face of
ccw-type $(j_1, j_2,j_3)$. By convention, when giving the type of the root face, we shall always 
start the cyclic sequence from the endpoint of the root edge (in particular, the root edge has its origin 
and endpoints at respective distances $j_2$ and $j_1$ from the pointed vertex). Note that,  
the only possible ccw-types for the root face are of the form $(j,j-1,j-2)$, $(j,j-1,j-1)$, $(j,j-1,j)$, 
$(j,j-1,j+1)$, $(j,j,j-1)$, $(j,j,j)$, $(j,j,j+1)$, $(j,j+1,j)$, $(j,j+1,j+1)$ and $(j,j+2,j+1)$ for some $j$. 
We call $R_{i_1,i_2,i_3}=R_{i_1,i_2,i_3}(t)$ the generating function 
for vertex-pointed and rooted $3$-hypermaps, enumerated with a weight $t$ per dark face,
whose root face is of ccw-type $\tau=(i_1-m,i_2-m,i_3-m)$ for some $m\geq 0$  (clearly $m \leq \min(i_1,i_2,i_3)$
by definition and the allowed values of $(i_1,i_2,i_3)$ have the same form as the ccw-types listed above).
Note that, in a $3$-hypermap, the number of edges is three times the number of dark faces, so 
our counting amounts to attaching a weight $t^{1/3}$ to each edge. 

From Theorem~\ref{theo:hyp_mobiles}, these $3$-hypermaps are in one-to-one correspondence with $3$-mobiles
 with a marked black vertex of cw-type $\ubtau$, one of its incident half-edges being distinguished.
Note that the labels encountered in the sequence $\ubtau$ are precisely $i_1-m+1$, $i_2-m+1$ and $i_3-m+1$ 
(with a prescribed order of apearence). Hence, gathering all cases for all $m\geq 0$ and shifting 
the labels by $m$, we end up with floating $3$-mobiles with a marked black vertex adjacent to white 
vertices with labels  $i_1+1$, $i_2+1$ and $i_3+1$ in a prescribed order.
This allows to write, for the allowed values of $(i_1,i_2,i_3)$
\begin{equation}
R_{i_1,i_2,i_3}=t\, T_{i_1+1}T_{i_2+1}T_{i_3+1}
\end{equation}
where $T_i$ is the generating function of floating $3$-mobiles planted at a white vertex with label $i$.
This later generating function is easily shown to satisfy
\begin{equation}
\ 
\hskip -.5cm T_i=\frac{1}{1-t\,(T_{i-2}T_{i-1}\!+\!T_{i-1}^2\!+\!2 T_{i-1}T_i\!+\!T_i^2\!+\!T_{i-1}T_{i+1}
\!+\!2 T_iT_{i+1}\!+\!T_{i+1}^2\!+\!T_{i+1}T_{i+2})}
\end{equation} 
for $i\geq 1$ with initial conditions $T_0=0$ and $T_{-1}T_0=0$. This equation expresses that a floating $3$-mobiles 
planted at a white vertex 
labelled $i$ may be viewed as a sequence of pairs of planted floating $3$-mobiles -- attached to the root vertex via trivalent black vertices -- with appropriate root labels. As explained in \cite[Section 6.1]{GEOD} and \cite{BG12}, we have the explicit formula
\begin{equation}
\begin{split}
&T_i=T \frac{v_i(y_1,y_2)\, v_{i+3}(y_1,y_2)}{v_{i+1}(y_1,y_2)\, v_{i+2}(y_1,y_2)} \\
&\text{where}\ \ T=1+10 t \,T^3\ .
\end{split}
\end{equation}
Here $y_1$ and $y_2$ are the two solutions (with modulus less than $1$) of
\begin{equation}
y^2+6 y+6+\frac{6}{y}+\frac{1}{y^2}=\frac{1}{t\, T^3}
\end{equation}
(note that, in particular $y_1 $ and $y_2$ are related by $y_1+y_1^{-1}+y_2+y_2^{-1}+6=0$)
and $v_i(y_1,y_2)$ denotes 
\begin{equation}
v_i(y_1,y_2)=1-\frac{1-y_1 y_2}{y_1-y_2} \, y_1^{i+1}-\frac{1-y_1 y_2} {y_2-y_1}\, y_2^{i+1}
- y_1^{i+1} y_2^{i+1}\ . \\ 
\end{equation}
Consider now a vertex-pointed and rooted $3$-hypermap and concentrate on the type of
its root edge, defined as the pair $(k,j)$ of the respective distances of its origin end
endpoint from the pointed vertex. In particular, we denote 
as in previous sections by $R_i\equiv R_i(t)$, $i\geq 1$ the generating function for vertex-pointed 
and rooted $3$-hypermaps with a root edge is of type $(j-1,j)$ for $j\leq i$. 
Listing the possible corresponding ccw-type of the root face, namely $(j,j-1,j-2)$, $(j,j-1,j-1)$, $(j,j-1,j)$ or $(j,j-1,j+1)$,
we deduce
\begin{equation}
\begin{split}
R_i& =1+R_{i+1,i,i-1}+R_{i+1,i,i}+R_{i+1,i,i+1}+R_{i+1,i,i+2}\\
&=1+t\, T_{i}T_{i+1}(T_{i-1}+T_{i}+T_{i+1}+T_{i+2})
\end{split}
\end{equation}
with, as before, a conventional term $1$. After some tedious calculations, we find, using the above mentioned relation between
$y_1$ and $y_2$, the remarkable simplification
\begin{equation}
\begin{split}
&R_i=R \frac{v_{i+1}(y_1,y_2)\, v_{i+3}(y_1,y_2)}{(v_{i+2}(y_1,y_2))^2} \\
&\text{where}\ \ R=1+4 t\,  T^3\ .
\end{split}
\end{equation}
We may finally compute the generating function $V_i$ for vertex-pointed $3$-hypermaps, enumerated with a weight $t$ per 
dark face, with an extra marked vertex at distance $j\leq i$ from the pointed vertex, with $i\geq 1$ (again with
their symmetry factor). From Proposition~\ref{prop:const1}, $V_i$ is the generating function for floating $3$-mobiles
with a marked non-right-local-max vertex labelled $i$ and we may write  
\begin{equation}
\begin{split}
&\hskip -1.2cm V_i=\sum_{k\geq 1}\frac{\left(t\,(T_{i-2}T_{i-1}\!+\!T_{i-1}^2\!+\!2 T_{i-1}T_i\!+\!T_i^2\!+\!T_{i-1}T_{i+1}
\!+\!2 T_iT_{i+1}\!+\!T_{i+1}^2\!+\!T_{i+1}T_{i+2})\right)^k}{k}\\
&-\sum_{k\geq 1}\frac{\left(t\,(T_{i-2}T_{i-1}\!+\!T_{i-1}^2\!+\!2 T_{i-1}T_i\!+\!T_i^2\!+\!T_iT_{i+1})\right)^k}{k}\\
&\hskip -1.2cm \ \ =\log\left(T_i(1-t\, (T_{i-2}T_{i-1}\!+\!T_{i-1}^2\!+\!2 T_{i-1}T_i\!+\!T_i^2\!+\!T_iT_{i+1})\right)\\
&\hskip -1.2cm \ \ =\log\left(1+t\, T_{i}T_{i+1}(T_{i-1}+T_{i}+T_{i+1}+T_{i+2} \right)\\
&\hskip -1.2cm \ \ =\log(R_i)
\end{split}
\end{equation} 
valid for $i\geq 1$ (the subtracted term removes configurations were
the root vertex would be a right-local max). We thus recover the same relation $V_i=\log(R_i)$ as in previous sections,
as expected. 

\subsection{The two-point function of $3$-constellations}
\label{sec:2p3c}
Consider finally vertex-pointed and rooted $3$-constellations endowed with their geodesic labelling,
whose root face is of ccw-type $(i_1-m, i_2-m,i_3-m)$ for some $m\geq 0$ and denote again 
by $R_{i_1,i_2,i_3}=R_{i_1,i_2,i_3}(t)$ their generating function. The possible values of
$(i_1,i_2,i_3)$ are now restricted to $(i,i-1,i-2)$,  
$(i,i-1,i+1)$ and $(i,i+2,i+1)$ for some $i$. 

\subsubsection{Computation from the bijective approach}
From Proposition~\ref{prop:const1}, vertex-pointed and rooted $3$-constellations with root face of ccw-type $\tau$ 
are in one-to-one correspondence with $3$-descending mobiles
with a marked black vertex of cw-type $\ubtau$, one of its incident half-edges being distinguished.
By the same argument as before, we find
\begin{equation}
R_{i_1,i_2,i_3}=t\, T_{i_1+1}T_{i_2+1}T_{i_3+1}
\end{equation}
for the allowed values of $(i_1,i_2,i_3)$.
Here $T_i$ denotes the generating function for floating $3$-descending mobiles planted at a white vertex labelled $i$. It
satisfies
\begin{equation}
T_i=\frac{1}{1-t\,(T_{i-2}T_{i-1}+T_{i-1}T_{i+1}+T_{i+1}T_{i+2})}
\end{equation} 
for $i\geq 1$ with initial conditions $T_0=0$ and $T_{-1}T_0=0$.
The solution of this equation was found (in the context of Eulerian quadrangulations viewed as a particular case
of $4$-constellations -- see \cite{GEOD,PDFRaman}) to be
\begin{equation}
\begin{split}
&T_i=T \frac{v_i(y_1,y_2)\, v_{i+5}(y_1,y_2)}{v_{i+1}(y_1,y_2)\, v_{i+4}(y_1,y_2)} \\
&\text{where}\ \ T=1+3 t \,T^3\ .
\end{split}
\end{equation}
Here $y_1$ and $y_2$ are the two solutions (with modulus less than $1$) of
\begin{equation}
y^2+2 y+\frac{2}{y}+\frac{1}{y^2}=\frac{1}{t\, T^3}
\end{equation}
(note that, in particular $y_1 $ and $y_2$ are related by $y_1+y_1^{-1}+y_2+y_2^{-1}+2=0$)
and $v_i(y_1,y_2)$ now denotes 
\begin{equation}
\begin{split}
&v_i(y_1,y_2)=1-\frac{p_1-y_1^4 p_2}{p_1-p_2} \, y_1^{i}-\frac{p_2-y_2^4 p_1} {p_2-p_1}\, y_2^{i}
+\frac{y_2^4p_1-y_1^4p_2}{p_1-p_2} y_1^{i} y_2^{i}\\ 
&\text{where}\ \ p_1=y_1+y_1^2+y_1^3\ , \quad p_2=y_2+y_2^2+y_2^3 \ .\\
\end{split}
\end{equation}
We may as before concentrate on the type of the root edge only and look for instance at the generating function
$R_i\equiv R_i(t)$, $i\geq 1$ for vertex-pointed 
and rooted $3$-constellations with a root edge of type $(j-1,j)$ for $j\leq i$. 
By listing the possible ccw-type of the root face, namely 
$(j,j-1,j-2)$ and $(j,j-1,j+1)$, we obtain
\begin{equation}
\begin{split}
R_i& =1+R_{i+1,i,i-1}+R_{i+1,i,i+2}\\
&=1+t\, T_{i}T_{i+1}(T_{i-1}+T_{i+2})
\end{split}
\end{equation}
with, as before, a conventional term $1$. 
We find the remarkable simplification
\begin{equation}
\begin{split}
&R_i=R \frac{v_{i+1}(y_1,y_2)\, v_{i+5}(y_1,y_2)}{v_{i+2}(y_1,y_2)v_{i+4}(y_1,y_2)} \\
&\text{where}\ \ R=1+2 t\,  T^3\ .
\end{split}
\end{equation}
We may as before evaluate the generating function $V_i$ for vertex-pointed $3$-constella\-tions, enumerated with a weight $t$ per 
dark face, with an extra marked vertex at distance $j\leq i$ from the pointed vertex, with $i\geq 1$ (with
symmetry factors). Repeating the arguments of previous Sections, we again have the relation $V_i=\log(R_i)$. 

\subsubsection{Comment on the form of the two-point function}
\label{sec:comm2pconst}
Let us now comment on the respective forms of $T_i$ and $R_i$: first we note that the indices involved in the 
respective bi-ratios of $v$'s match what we expect for constellations. As discussed in \cite{GEOD,PDFRaman},
we expect the two-point function 
for pointed rooted $p$-constellations to display a bi-ratio of the form $(U_i / U_{i+1})/(U_{i+p}/U_{i+p+1})$. 
The observed form of $T_i$ is thus typical of $4$-constellations (with $U_i \leftrightarrow v_i$) in agreement with the fact that, 
from the BDG bijection, $T_i$ may be interpreted as a two-point function for $4$-regular constellations (i.e.\ Eulerian quadrangulations).
As for the form of $R_i$, it is typical of $3$-constellations (with $U_i \leftrightarrow v_{i+1}$) as it should.
A second remark concerns the precise value of the function $v_i(y_1,y_2)$. Its observed form is characteristic of $4$-constellations
while, for a $3$-constellation, one would have expected instead \cite{GEOD,PDFRaman}
\begin{equation}
\begin{split}
&v_i(y_1,y_2)=1-\frac{p'_1-y_1^3 p'_2}{p'_1-p'_2} \, y_1^{i}-\frac{p'_2-y_2^3 p'_1} {p'_2-p'_1}\, y_2^{i}
+\frac{y_2^3p'_1-y_1^3p'_2}{p'_1-p'_2} y_1^{i} y_2^{i}\\ 
&\text{where}\ \ p'_1=y_1+y_1^2\ , \quad p'_2=y_2+y_2^2 \\
\end{split}
\end{equation}
Remarkably enough, the two expression do coincide whenever $y_1+y_1^{-1}+y_2+y_2^{-1}+2=0$,
which is precisely the above mentioned condition satisfied by $y_1$ and $y_2$ in the solution for $T_i$. 
In other words,  the observed form for $v_i(y_1,y_2)$  matches both that expected for $4$-constellations and 
that expected for $3$-constellations, a non-trivial property. 
Note that the relation between $y_1$ and $y_2$ (and therefore the coincidence of the two expressions for $v_i$) holds only when 
dealing with a very specific family of $4$-constellations, namely the $4$-regular ones.

This property generalizes as follows: the expected form for the two-point function $R_i$ for a $p$-constellation is \cite{GEOD,PDFRaman},
\begin{equation}
R_i=R \frac{v_i v_{i+p+1}}{v_{i+1}v_{i+p}}
\end{equation}
where $v_i=v_i(y_1,y_2, \cdots,y_m)$ takes the form
\begin{equation}
v_i(y_1,y_2,\cdots,y_m)=\sum_{I\subset \{1,2,\cdots,m\}} \prod_{k\in I}\lambda_k y_k^i \prod_{k,k'\in I} 
\frac{(p^{(p)}_k-p^{(p)}_{k'})(q^{(p)}_k-q^{(p)}_{k'})}{(p^{(p)}_k-q^{(p)}_{k'})(q^{(p)}_k-p^{(p)}_{k'})}\ .
\end{equation}
Here the sum is over all subsets $I$ of $\{1,2,\cdots,m\}$. 
The quantities $y_i$ are the solutions (with modulus less than one) of a characteristic equation depending on
the problem at hand (and their number $m$ depends on the problem too) and the $\lambda_k$ are fixed by 
demanding $v_0=v_{_1}=v_{-2}=\cdots=v_{-m+1}=0$.
Finally, $p^{(p)}_k$ and $q^{(p)}_k$ are defined as
\begin{equation}
p^{(p)}_k=y_k+y_k^2+\cdots +y_k^{p-1}\ , \qquad q^{(p)}_k=y_k^{-1}+y_k^{-2}+\cdots +y_k^{-p+1}\ .
\end{equation}
The reader will easily check that all the specific expressions given above for $2$-, $3$- and $4$-constellations
are indeed of this form. Now, from our bijections, the two-point function $R_i$ in the case
of $p$-constellations enumerated with a weight $t$ per dark face is easily expressed in terms of the generating function $T_i$ for
descending $p$-mobiles with a weight $t$ per black vertex, which is itself the two-point function for a particular instance of $(p+1)$-constellations,
namely $(p+1)$-regular constellations (i.e.\ with all their dark and light faces of degree $(p+1)$), enumerated
with a weight $t$ per dark face. As such, $T_i$ 
takes the form expected for a $(p+1)$-constellation (with moreover $m=p-1$ in this case)
\begin{equation}
T_i=T \frac{v_i v_{i+p+2}}{v_{i+1}v_{i+p+1}}
\end{equation}
where $v_i=v_i(y_1,y_2, \cdots,y_{p-1})$ now reads
\begin{equation}
\ 
\hskip -1.cm v_i(y_1,y_2,\cdots,y_{p-1})=\!\! \sum_{I\subset \{1,2,\cdots,p-1\}} \prod_{k\in I}\lambda_k y_k^i \prod_{k,k'\in I} 
\frac{(p^{(p+1)}_k-p^{(p+1)}_{k'})(q^{(p+1)}_k-q^{(p+1)}_{k'})}{(p^{(p+1)}_k-q^{(p+1)}_{k'})(q^{(p+1)}_k-p^{(p+1)}_{k'})}\ .
\end{equation}
The reader may again check that the specific expressions given above for $3$- and $4$-regular constellations
are indeed of this form. It is tempting to conjecture that the $v_i$ appearing in $R_i$ is the same as that appearing in $T_i$
(we have seen above that this holds for $p=2$ and $p=3$) and that the precise relation between $R_i$ and $T_i$ 
is responsible for the change of indices in the involved bi-ratios.
As a check of consistency for this conjecture to hold for general $p$, we can verify that $v_i(y_1,y_2, \cdots,y_{p-1})$,
as given just above, may be rewritten in the form expected for a $p$-constellation, which requires
\begin{equation}
\frac{(p^{(p+1)}_k-p^{(p+1)}_{k'})(q^{(p+1)}_k-q^{(p+1)}_{k'})}{(p^{(p+1)}_k-q^{(p+1)}_{k'})(q^{(p+1)}_k-p^{(p+1)}_{k'})}
=\frac{(p^{(p)}_k-p^{(p)}_{k'})(q^{(p)}_k-q^{(p)}_{k'})}{(p^{(p)}_k-q^{(p)}_{k'})(q^{(p)}_k-p^{(p)}_{k'})}
\label{cpq}
\end{equation}
for all pairs $\{y_k,y_{k'}\}$ of distinct solutions of the characteristic equation for $(p+1)$-regular constellations. The later reads
explicitly 
\begin{equation}
\begin{split}
&H(y)\equiv \sum_{k=1}^{p-1} (p-k) \left(y^k+\frac{1}{y^k}\right)=\frac{1}{t\, T^{p}}\\
&\text{where}\ \ T=1+p\,t\,  T^p\ .
\end{split}
\end{equation}
To prove \eqref{cpq}, we note, using $y_k^p q_k^{(p)}=p_k^{(p)}$, that  
\begin{equation}
\frac{(p^{(p)}_k-p^{(p)}_{k'})(q^{(p)}_k-q^{(p)}_{k'})}{(p^{(p)}_k-q^{(p)}_{k'})(q^{(p)}_k-p^{(p)}_{k'})}
=\frac{a^{(p)}_{kk'}+a^{(p)}_{k'k}-1}{a^{(p)}_{kk'}a^{(p)}_{k'k}},\qquad a^{(p)}_{kk'}
=\frac{y_{k'}^pp_k^{(p)}-p_{k'}^{(p)}}{p_k^{(p)}-p_{k'}^{(p)}}
\end{equation}
so \eqref{cpq} is satisfied if $a^{(p)}_{kk'}= a^{(p+1)}_{kk'}$, or equivalently
\begin{equation}
\begin{split}
&0=(y_{k'}^pp_k^{(p)}-p_{k'}^{(p)})(p_k^{(p+1)}-p_{k'}^{(p+1)})-(y_{k'}^{p+1}p_k^{(p+1)}-p_{k'}^{(p+1)})(p_k^{(p)}-p_{k'}^{(p)})\\
& \ \ = (y_k y_{k'})^p (1-y_{k'})(H(y_k)-H(y_{k'})),
\end{split}
\end{equation}
which holds precisely whenever $y_k$ and $y_{k'}$ are two solutions of the characteristic equation, since $H(y_k)=H(y_{k'})$
in this case.
That the passage from $T_i$ to $R_i$ induces the wanted change of indices in the involved bi-ratios
of $v_i$ is far from obvious and we have not found any simple argument that would explain this property for general $p$.

\section{Two-point functions depending on two parameters}
\label{sec:2p2par}   

So far, we have not exploited the full power of the bijection of
Theorem~\ref{theo:hyp_mobiles}, and in particular the property that it
transforms the faces of a (hyper)map into right local max of the
corresponding mobile. In this section, we will use this property in
order to derive the two-point function of general planar maps and
bipartite planar maps counted according to \emph{two size parameters}:
the number of edges and the number of faces. (Note that the number of
vertices is then fixed by Euler's relation.)

The case of general planar maps was actually first treated in
\cite[Section 5]{AmBudd}, but we will recall its derivation for
completeness.

\subsection{The two-point function of general maps with edge and face weights}
\label{sec:2pgen2par}

Our purpose is to obtain a generalization of the two-point function of
general maps derived in Section~\ref{sec:2pgen} depending on two
formal variables: the previous weight $t$ per edge, and an extra
weight $z$ per face. We still denote by $R_i \equiv R_i(t,z)$, $i \geq
1$ (resp.\ $S_i^2 \equiv S_i(t,z)^2$, $i \geq 0$) the generating
function of pointed rooted maps whose root edge is of type $(j-1,j)$
(resp.\ $(j,j)$) for $j \leq i$.

By Theorem~\ref{theo:hyp_mobiles}, these generating functions may
alternatively be understood as counting some floating $2$-mobiles, and
the variable $z$ now plays the role of a weight per right local max.
Note that, upon removing the bivalent black vertices, a right local max is
nothing but a vertex having no neighbor with strictly larger label. We still denote by
$T_i \equiv T_i(t,z)$ the generating function of floating $2$-mobiles
planted at a white vertex labelled $i$, with this extra variable. In
order to extend \eqref{eq:gentrec} to the case $z \neq 1$, we shall
introduce another generating function $U_i \equiv U_i(t,z)$ counting
the same objects, but where the root vertex does not receive the
weight $z$ if it is a local max. We then have the following equations
\begin{equation}
  \label{eq:gentuieq}
    T_i = z + t (T_i U_{i-1} + T_i^2 + U_i T_{i+1}), \qquad
    U_i = 1 + t (U_i U_{i-1} + U_i T_i + U_i T_{i+1}),
\end{equation}
with initial data $T_0=U_0=0$, which are obtained by a straightforward
recursive decomposition. Clearly, these equations admit a unique
solution in the set of sequences of formal power series in $t$ and
$z$.

Ambj\o rn and Budd derived these equations in a slightly different
form and, remarkably, found an exact expression for their solution. It
takes the form \cite[Eq.~(26)]{AmBudd}
\begin{equation}
  \label{eq:gentuisol}
    T_i = T \frac{(1-y^i)(1-\alpha^2 y^{i+3})}{
      (1-\alpha y^{i+1})(1-\alpha y^{i+2})}, \qquad
    U_i = U \frac{(1-y^i)(1-\alpha y^{i+3})}{
      (1-y^{i+1})(1-\alpha y^{i+2})},
\end{equation}
where $T$, $U$, $y$ and $\alpha$ may be determined by the condition that this
ansatz
satisfies the equations. In practice, substituting
\eqref{eq:gentuisol} into \eqref{eq:gentuieq} yields a system of two
equations of the form
\begin{equation}
  \label{eq:polform}
  P_s(t,z,T,U,y,\alpha,y^i) = 0, \qquad
  s=1,2,
\end{equation}
where $P_1,P_2$ are seven-variable polynomials that are
independent of $i$. Thus, expanding $P_1,P_2$ with respect to the last
variable, all the coefficients should vanish identically, which yields
a system of algebraic equations relating
$t,z,T,U,y,\alpha$. Miraculously, this system defines a
two-dimensional variety, which allows to determine the ``unknowns''
$T,U,y,\alpha$ as algebraic power series in the variables $t$ and $z$.
Of course, we find that $T$ and $U$ are nothing but the constant
solutions of \eqref{eq:gentuieq}, namely they are specified by the
tree equations
\begin{equation}
  \label{eq:gentueq}
  T = z + t (T^2 + 2 T U), \qquad U = 1 + t (2 T U + U^2),
\end{equation}
while we find that $y$ and $\alpha$ are obtained by inverting the relations
\begin{equation}
  \label{eq:genyalpeq}
  \begin{split}
    t &= \frac{y (1 - \alpha y)^3 (1 - \alpha y^3)}{ (1+y+\alpha y-6
      \alpha y^2+\alpha y^3 + \alpha^2 y^3 + \alpha^2 y^4)^2},\\
    z &= \frac{\alpha (1 - y)^3 (1 - \alpha^2 y^3)}{
      (1 - \alpha y)^3 (1 - \alpha y^3)}. \\
  \end{split}
\end{equation}
The first few terms of $y$ and $\alpha$ read
\begin{equation}
  \begin{split}
    y &= t+(2+5z)t^2+(5+31z+23z^2)t^3+(14+153z+275z^2+102z^3)t^4\cdots,\\
    \alpha &= z+3z(1-z)t+3z(1-z)(4+z)t^2+z(1-z)(49+51z+4z^2)t^3 + \cdots, \\
  \end{split}
\end{equation}
and it can be shown that $y$ and $\alpha$ are indeed series in $t$
whose coefficients are polynomials in $z$ with integer
coefficients. Furthermore, it appears that all coefficients of $y$ and
$\alpha y$ are nonnegative, which suggests a possible combinatorial
interpretation. For bookkeeping purposes, let us mention the nice
relations
\begin{equation}
  \frac{U}{T} = \frac{(1-\alpha y)^2}{\alpha (1-y)^2}, \qquad
  t T^2 = \frac{\alpha^2 y (1-y)^4}{(1 - \alpha y)^3 (1 - \alpha y^3)}.
\end{equation}

Recalling now the actual mobile interpretation of the generating
functions $R_i$ and $S_i$ of Section~\ref{sec:2pgen}, we easily obtain
the expressions
\begin{equation}
  \begin{split}
    R_i&= 1 + t\, U_i T_{i+1}, \qquad i \geq 1,\\
    S_i&= \sqrt{t}\, T_i, \qquad i \geq 0,\\
  \end{split}
\end{equation}
which extend \eqref{eq:genrsti} in the case $z \neq 1$. Interestingly,
we find that $R_i$ reads
\begin{equation}
  R_i = R \frac{(1-\alpha y^{i+1}) (1-\alpha y^{i+3})}{(1-\alpha y^{i+2})^2}, \qquad
  R = 1+t\, UT=\frac{(1-\alpha y^2)^2}{(1-\alpha y)(1-\alpha y^3)},
\end{equation}
which naturally matches the form \eqref{eq:2pgencontfracform} expected from
the continued fraction approach, since our problem now amounts to
considering maps with a weight $g_k=z t^{k/2}$ per face of degree
$k$. As for $S_i$, it might also be written in the form
\eqref{eq:2pgencontfracform}, albeit in a non-unique manner. We might also
repeat the exercise of Section~\ref{sec:2pgencf} and check that $R$
and $S=\sqrt{t}T$ satisfy the relations
\begin{equation} \label{eq:2pgen2parRSeq}
  S=z \sum_{k=1}^\infty t^{k/2} P(k-1,R,S), \qquad
  R=1+\frac{z}{2} \sum_{k=1}^\infty t^{k/2} P(k,R,S)-\frac{S^2}{2}
\end{equation}
extending \eqref{eq:2pgenRSeq}. Doing a full derivation of the
two-point functions from the continued fraction approach is a
challenging open question.

\subsection{The two-point function of bipartite maps with edge and face weights}
\label{sec:2pbip2par}

We may attempt the same approach in the case of bipartite maps, to
obtain a two-variable generalization of the two-point function derived
in Section~\ref{sec:2pbip}. Let $R_i \equiv R_i(t,z)$, $i \geq 1$, be
the generating function of pointed rooted bipartite maps whose root
edge is of type $(j-1,j)$, $j \leq i$, counted with a weight $t$ per
edge and $z$ per face. By Proposition~\ref{prop:const1}, such maps are
in bijection with some floating $2$-descending mobiles, with $z$ a
weight per (right) local max. Denote again by $T_i \equiv T_i(t,z)$
(resp.\ $U_i \equiv U_i(t,z)$) the generating functions of floating
$2$-descending mobiles planted at a white vertex labelled $i$, counted
with a weight $t$ per black vertex and $z$ per right local max (resp.\
right local max distinct from the root vertex). We now find the
recursive decomposition equations
\begin{equation}
  \label{eq:biptuieq}
    T_i = z + t (T_i U_{i-1} + U_i T_{i+1}), \qquad
    U_i = 1 + t (U_i U_{i-1} + U_i T_{i+1}),
\end{equation}
with initial data $T_0=U_0=0$.

Inspired by the Ambj\o rn-Budd solution \eqref{eq:gentuisol} for
general maps and by the form \eqref{eq:biptisol} for $T_i=U_i$ at
$z=1$, we make the ansatz
\begin{equation}
  \label{eq:biptuisol}
      T_i = T \frac{(1-y^i)(1-\alpha^2 y^{i+4})}{
      (1-\alpha y^{i+1})(1-\alpha y^{i+3})}, \qquad
    U_i = U \frac{(1-y^i)(1-\alpha y^{i+4})}{
      (1-y^{i+1})(1-\alpha y^{i+3})}.
\end{equation}
Repeating the same strategy as in the previous section we find,
miraculously again, that there exists $T$, $U$, $y$ and $\alpha$ which
are algebraic power series in $t$ and $z$ such that the ansatz
satisfies the equations. As expected $T$ and $U$ are specified by
\begin{equation}
  \label{eq:biptueq}
  T = z + 2 t T U, \qquad U = 1 + t U (T+U),
\end{equation}
while $y$ and $\alpha$ are obtained by inverting
\begin{equation}
  \label{eq:bipyalpeq}
  t = \frac{y (1-\alpha y)^2 (1-\alpha y^4)}{(1+y)^2 (1-\alpha y^2)^3}, \qquad
  z = \frac{\alpha (1-y)^2 (1-y^2) (1+\alpha y^2)}{
    (1-\alpha y)^2 (1-\alpha y^4)}.
\end{equation}
We note that $T$ and $U$ are parametrized in terms of $y$ and $\alpha$ by
\begin{equation}
  \label{eq:biptuparam}
  T = \frac{\alpha (1-y^2)^2 (1-\alpha y^2)}{(1-\alpha y)^2 (1-\alpha y^4)},
  \qquad U = \frac{(1+y) (1-\alpha y^2)^2}{(1-\alpha y) (1-\alpha y^4)} 
\end{equation}
and that the first few terms of $y$ and $\alpha$ read
\begin{equation}
  \begin{split}
    y &= t+2(1+z)t^2+(5+13z+3z^2)t^3+(14+66z+40z^2+4z^3)t^4+\cdots, \\
    \alpha &= z+2z(1-z)t+z(1-z)(8-z)t^2+32z(1-z)t^3+\cdots. \\
  \end{split}
\end{equation}
(It can again be shown that $y$ and $\alpha$ are series in $t$ whose
coefficients are polynomials in $z$ with integer coefficients, and it
seems that all coefficients of $y$ and $\alpha y$ are positive.)

By the mobile interpretation of $R_i$ explained in
Section~\ref{sec:2pbip}, we have
\begin{equation}
  R_i = 1 + t\, U_i T_{i+1}
\end{equation}
which extends \eqref{eq:2pbiprti} for $z \neq 1$. We deduce that
\begin{equation}
  R_i = R \frac{(1-\alpha y^{i+1})(1-\alpha y^{i+4})}{
    (1-\alpha y^{i+2})(1-\alpha y^{i+3})}, \qquad
  R = 1+t\, UT =\frac{(1-\alpha y^2)(1-\alpha y^3)}{(1-\alpha y)(1-\alpha y^4)}
\end{equation}
which matches the form \eqref{eq:2pbipcontfracform} expected for the
two-point function of a family of bipartite maps, according to the
continued fraction approach (our weighting scheme is indeed equivalent
to attaching a weight $z t^k$ to each face of degree $2k$).

Finally, by a straightforward generalization of the arguments given in Section~\ref{sec:genhyper},  we
may compute the generating function $\mathcal{R}_i=\mathcal{R}_i(t,z)$ of vertex-pointed general
hypermaps with a marked edge of type $(j-1,j)$ for $j\leq i$, enumerated now with both a weight $t$
per edge of the hypermap and a weight $z$ per dark face. It simply reads
\begin{equation}
\mathcal{R}_i= 1+ t^2 U_iU_{i+1}T_{i+2}\ ,
\end{equation} 
$i\geq 1$ (with as before a conventional term $1$).  Using
the explicit form of $T_i$ and $U_i$ above, we arrive at the factorized form
\begin{equation}
\begin{split}
&\mathcal{R}_i=\mathcal{R} \frac{(1-\alpha y^{i+2})(1-\alpha y^{i+4})}{(1-\alpha y^{i+3})^2}\\
&\text{where}\ \ \mathcal{R}=1+ t^2\,U^2\, T .
\end{split}
\end{equation}

\section{Conclusion}
\label{sec:conclusion}
In this paper, we made extensive use of bijections to obtain the two-point function for a number of families
of maps and hypermaps with unbounded faces degrees.
We now list a few remarks and discuss possible extensions of our work, as well as open questions.

First, we have found that the two-point function $R_i$ of general maps (resp.\ bipartite maps) counted by their number of edges 
is closely related to that of quadrangulations (resp.\ $3$-regular constellations, i.e. Eulerian triangulations), counted
by their number of faces (resp.\ dark faces). Indeed, we have $R_i=1+t T_{i}T_{i+1}$ where 
$T_{i}$, given by \eqref{eq:2pgenTi} (resp.\ \eqref{eq:biptisol}), and defined as a mobile generating function, can also be viewed as the two-point function for quadrangulations
(resp.\ Eulerian triangulations), see \cite{ONEWALL}. As a consequence, it is easily checked that these quantities are the
same in the scaling limit, namely the continuum two-point function $\varphi(r)$ of general maps (resp.\ bipartite maps)
\begin{equation}
\varphi(r)= \lim_{n\to \infty} \frac{[t^n] R_{\lfloor r n^{1/4} \rfloor}}{[t^n]R}
\end{equation}
coincides with that of quadrangulations (resp. Eulerian triangulations), which reads explicitly \cite{GEOD}
\begin{equation}
\begin{split}
\varphi(r)& =
\lim_{n\to \infty} \frac{[t^n] T_{\lfloor r n^{1/4} \rfloor}}{[t^n]T}\\
&= \frac{4}{\sqrt{\pi}}\int_0^\infty d\xi\, \xi^2\, e^{-\xi^2} \left( 1-6 \frac{1-\cosh(a\, r  \sqrt{\xi})
\cos(a\, r  \sqrt{\xi}) }{(\cosh(a\, r  \sqrt{\xi})-\cos(a\, r  \sqrt{\xi}))^2} \right)\\
\end{split}
\end{equation}
with $a=\sqrt{3}$ (resp.\ $a=\sqrt{2}$).
Note that $\varphi(r)$ with $a=\sqrt{2}$ is also the continuum
two-point function of general hypermaps $\varphi(r)= \lim_{n\to \infty} \frac{[t^n] \mathcal{R}_{\lfloor r n^{1/4} \rfloor}}{[t^n]\mathcal{R}}$.
The equality of the two-point functions of general maps and quadrangulations in the scaling limit suggests
the stronger property that these metric spaces are asymptotically the ``same'' (note that in both cases,
the two-point function corresponds to the actual, symmetric, graph distance). Precisely, we conjecture that the
Ambj\o rn-Budd bijection defines a coupling between uniform planar maps with $n$ edges and uniform planar
quadrangulations with $n$ faces such that their Gromov-Hausdorff distance is asymptotically almost surely negligible
with respect to $n^{1/4}$. Since the latter are known to converge to the Brownian map \cite{Miermont2013,LeGall2013},
the convergence of the former would follow.

Second, we have shown that the two-point functions of general and bipartite maps match the form expected
from the continued fraction approach. It would be interesting to have a full independent rederivation of our
results via this approach. This requires the computation of Hankel determinants of growing size: in the
context of maps with bounded face degrees \cite{BG12}, this was made through an identification of
these determinants as symplectic Schur functions. Here no such reduction is available and this is therefore
a challenging question.  

Third, we have exhibited in Section~\ref{sec:2p2par} explicit expressions for the two-point functions 
of general maps and bipartite maps counted by both their number of edges and their number of faces.
Our derivation makes use only of the case $p=2$ of Theorem \ref{theo:hyp_mobiles} and Proposition \ref{prop:const1}.
It is natural to expect an extension of these formulas for general $p$, i.e.\ for $p$-hypermaps and
$p$-constellations counted by both their number of dark faces and their number of light faces. In other words,
we expect the system of equations satisfied by the corresponding 2-parameter generating functions to be
still integrable. We have not however been able to guess its actual solution.
 
Returning to the case $p=2$, the extra control on the number of faces allows to interpolate naturally
between trees (maps with a single face) and arbitrary maps (with an unconstrained number of faces).
For a fixed large number $n$ of edges, the typical distance falls down from the known order $n^{1/2}$
for trees to $n^{1/4}$ for maps. 
We may then wonder about the possibility of obtaining new, non-generic, scaling limits for maps with ``few'' faces,
where the distances would be of order $n^\beta$ with $1/4<\beta<1/2$. We plan to investigate the question
in the future.

\section*{Acknowledgements} The work of
JB and \'EF was partly supported by the ANR projects ``Cartaplus''
12-JS02-001-01.

\bibliographystyle{hplain}
\bibliography{gen2p}

\end{document}